\newtheorem{remark}[theorem]{{\itshape Remark}}
\newcommand{\Pb}{\mbox{\rm (P)}\xspace}
\newcommand{\uad}{\mathbf{U_{\rm ad}}}
\renewcommand{\div}{\operatorname{div}}
\newcommand{\bY}{\mathbf{Y}}
\newcommand{\by}{\mathbf{y}}
\newcommand{\bu}{\mathbf{u}}
\newcommand{\bv}{\mathbf{v}}
\newcommand{\bz}{\mathbf{z}}
\newcommand{\bef}{\mathbf{f}}
\newcommand{\bg}{\mathbf{g}}
\newcommand{\be}{\mathbf{e}}
\newcommand{\p}{\mathfrak{p}}
\newcommand{\q}{\mathfrak{q}}
\newcommand{\bphi}{{\boldsymbol\phi}}
\newcommand{\bpsi}{{\boldsymbol\psi}}
\newcommand{\bvarphi}{{\boldsymbol\varphi}}
\newcommand{\bLd}{{\mathbf{L}^2(\Omega)}}
\newcommand{\bLqd}{{\mathbf{L}^2(Q)}}
\newcommand{\bLf}{{\mathbf{L}^4(\Omega)}}
\newcommand{\bLs}{{\mathbf{L}^{s}(\Omega)}}
\newcommand{\bWoT}{{\mathbf{W}(0,T)}}
\newcommand{\bWqpoT}{{\mathbf{W}_{q,p}(0,T)}}
\newcommand{\bWrsoT}{{\mathbf{W}_{r,s}(0,T)}}
\newcommand{\bV}{\mathbf{V}}
\newcommand{\bBrs}{{\mathbf{B}_{s,r}(\Omega)}}
\newcommand{\bBqp}{{\mathbf{B}_{p,q}(\Omega)}}
\newcommand{\bM}{{\mathbf{M}(\omega)}}
\newcommand{\bWs}{{\mathbf{W}_s(\Omega)}}
\newcommand{\bWsc}{{\mathbf{W}_{s'}(\Omega)}}
\newcommand{\bWp}{{\mathbf{W}_p(\Omega)}}
\newcommand{\bWpc}{{\mathbf{W}_{\!p'}(\Omega)}}
\newcommand{\bWop}{{\mathbf{W}_0^{1,p}(\Omega)}}
\newcommand{\bWos}{{\mathbf{W}_0^{1,s}(\Omega)}}
\newcommand{\bWopc}{{\mathbf{W}_0^{1,p'}(\Omega)}}
\newcommand{\bWmop}{{\mathbf{W}^{-1,p}(\Omega)}}
\newcommand{\bH}{{\mathbf{H}}}
\newcommand{\bX}{{\mathbf{X}}}
\newcommand{\bHo}{{\mathbf{H}_0^1(\Omega)}}
\newcommand{\bHmo}{{\mathbf{H}^{-1}(\Omega)}}
\newcommand{\bHt}{{\mathbf{H}^2(\Omega)}}
\newcommand{\bHto}{{\mathbf{H}^{2,1}(Q)}}
\newcommand{\bCo}{{\mathbf{C}_0(\omega)}}
\newcommand{\lco}{{L^1(I;\bCo)}}
\newcommand{\lmoq}{{L^q(I;\bM)}}
\newcommand{\lmo}{{L^\infty(I;\bM)}}
\newcommand{\mo}{{M(\omega)}}
\newcommand{\bna}{\mathbf{\nabla}}
\newcommand{\mF}{\mathcal{F}}
\newcommand{\mA}{\mathcal A}
\newcommand{\mL}{\mathcal{L}}
\newcommand{\mY}{\mathcal{Y}}
\newcommand{\proj}{\mbox{\rm proj}}
\begin{document}

\title{Optimal Control of the 2D Evolutionary Navier-Stokes Equations with Measure Valued Controls\thanks{The first author was supported by Spanish Ministerio de Econom\'{\i}a, Industria y Competitividad under research project MTM2017-83185-P. The second was supported by the ERC advanced grant 668998 (OCLOC) under the EU’s H2020 research program.}}

\author{Eduardo Casas\thanks{Departamento de Matem\'{a}tica Aplicada y Ciencias de la Computaci\'{o}n, E.T.S.I. Industriales y de Telecomunicaci\'on, Universidad de Cantabria, 39005 Santander, Spain (eduardo.casas@unican.es).}
    \and Karl Kunisch\thanks{Institute for Mathematics and Scientific Computing, University of Graz, Heinrichstrasse 36, A-8010 Graz, Austria (karl.kunisch@uni-graz.at).}}

%\runningauthor{E.~CASAS AND K.~KUNISCH}
%\runningtitle{Control of Nonmonotone Elliptic Equations}

\maketitle

\begin{abstract}
In this paper, we consider an optimal control problem for the two-dimensional evolutionary Navier-Stokes system. Looking for sparsity, we take controls as functions of time taking values in a space of Borel measures. The cost functional does not involve directly the control but we assume some constraints on them. We prove the well-posedness of the control problem and derive necessary and sufficient conditions for local optimality  of the controls.
\end{abstract}

\begin{AMS}
%35J61, % semilinear elliptic equations
%35K58,  % semilinear parabolic equations
35Q30, %Navier-Stokes equations
49J20, % optimal control problems involving partial differential equations
49J52, % nonsmooth analysis
49K20, % optimality conditions: problems involving partial differential equations
49K40  %Sensitivity, stability, well-posedness
%49M25, discrete approximations
%90C48, % programming in abstract spaces
\end{AMS}

\begin{keywords}
Navier-Stokes equations, Borel measures, Sparsity, First and second order optimality conditions
\end{keywords}

\pagestyle{myheadings} \thispagestyle{plain} \markboth{E.~CASAS AND K.~KUNISCH}{Control of Navier-Stokes Equations with Measures}

\section{Introduction}
\label{S1}

In this paper we investigate the following optimal control problem
\[
\Pb \quad \min_{\bu \in \uad}J(\bu) = \frac{1}{2}\int_Q|\by_\bu(x,t) - \by_d(x,t)|^2\, dx\, dt,
\]
where $\uad = \{\bu \in L^\infty(0,T;\bM) : \|\bu(t)\|_\bM \le \gamma \text{ for a.a. } t \in (0,T)\}$ with $0 < \gamma < \infty$, and $\by$ and $\bu$ are related by the Navier-Stokes system
\begin{equation}
\left\{\begin{array}{l}\displaystyle\frac{\partial \by}{\partial t} -\nu\Delta\by + (\by \cdot \bna)\by + \nabla\p = \bef_0 + \chi_\omega\bu\ \text{ in } Q = \Omega \times I,\\[1.2ex]\div\by = 0 \ \text{ in } Q, \ \by = 0 \ \text{ on } \Sigma = \Gamma \times I,\ \by(0) = \by_0 \text{ in } \Omega.\end{array}\right.
\label{E1.1}
\end{equation}
Here, $I = (0,T)$ with $0 < T < \infty$, $\Omega$ denotes a bounded domain in $\mathbb{R}^2$ with a $C^3$ boundary $\Gamma$, and $\omega$ is a relatively closed subset of $\Omega$. We denote $\bM = M(\omega) \times M(\omega)$, where $M(\omega)$ is the space of real and regular Borel measures in $\omega$. In the cost functional $J$, the target $\by_d \in \bLqd$ is fixed.  Regarding the state equation, $\nu > 0$ is the kinematic viscosity coefficient, $\chi_\omega\bu$ denotes the extension of $\bu$ by zero outside $\omega$, and $\bef_0$ is a given element of $L^q(I,\bWmop)$ with $\bWmop = W^{-1,p}(\Omega) \times W^{-1,p}(\Omega)$, where
\begin{equation}
\frac{4}{3} \le p < 2\quad \text{and}\quad  q > \frac{2p}{p - 1}
\label{E1.2}
\end{equation}
are fixed. Observe that the previous assumptions imply that $q > 4$. For the initial condition we can take $\by_0 \in \bWop$ such that $\div{\by_0} = 0$. A more general choice for $\by_0$ will be given later.

Our motivation for the analysis of measure-valued controls is two-fold. On the one hand there it is the genuine interest in low-order regularity of the controls, on the other hand it relates to their sparsity promoting structure. Indeed, it has been observed and analyzed in much previous work that the optimal controls are typically zero over subsets of the domain, whereas they  would simply be 'small', but not zero, if they would be replaced by a control in a Hilbert space, for example. We refer, exemplarily to the work in \cite{Casas2017, CCK2012,  KT2016}, which treats these phenomena for equations of diffusion type as well as for wave equations. In these papers the sparsity promoting terms is part of the cost, whereas in \cite{Casas-Kunisch2019B}
the measure valued term appears as a constraint like in $\uad$ above.
 It should also be mentioned that in case  the measure-valued setting is replaced by an $L^1$ formulation together with  $L^2$ constraints or penalties, again sparsity phenomena occur, but the optimal controls are, of course, functions in this case rather than measures \cite{CHW2017, HSW2015}. 

In the literature,  the optimal control of the Navier-Stokes equations has received much attention, we refer exemplarily to \cite {Abergel-Temam90, BTZ00, DG08, DI94, HK01, TW06}, and the monograph \cite{G12} and the survey \cite{CC16}. The controls are always considered as functions in these contributions. Apparently the only work on measure valued optimal controls in the case of the Navier Stokes equations is \cite{Casas-Kunisch2019} which treats the stationary case.

For evolutionary  Navier Stokes equations with forcing functions of low regularity, allowing for measure-valued forcing, very little analysis has been carried out even for the state equation by itself. We are only aware of   \cite{Serre1983}, where the right hand side in \eqref{E1.1} is chosen in $W^{1,\infty}(I; \bWmop)$, with $\bWmop=\bigotimes_{i=1}^d W^{-1,p}(\Omega)$,  $d\in \{2,3\}$,  and $p\in (\frac{d}{2}, 2]$. It is mentioned there, that likely the result is not optimal. In our previous work \cite{Casas-Kunisch2020} we have obtained the necessary well-posedness results for \eqref{E1.1} which are required for the study of optimal control problems. Thus the current work is the first one which considers optimal control for evolutionary Navier Stokes equations with measure-valued controls.

When formulating optimal control problems some restrictions on the class of admissible controls  are essential to guarantee existence of minimizers, to be obtained by the standard method of the calculus of variations. Such restrictions are also  well motivated by applications. One possible choice consists in adding a properly chosen control cost to the cost-functional $J$ in $\Pb$. In our case it could be a  term of the form  $ \frac{\beta}{q} \int^T_0\|\bu(t)\|^q_\bM \,dt$, where $\beta$ is a positive weight.  For technical reasons $q=2$ seems not to be possible, since it does not imply sufficient temporal regularity on the class of admissible controls. From the analytical point of view it would suffice to take $q>4$. But we prefer to rather work with pointwise constraints in time. In this way we arrive at the class $\uad$ and the problem formulation  chosen in $\Pb$. This choice of temporal pointwise  constraints, also poses new challenges in  deriving both necessary and sufficient  second order optimality conditions, regardless of the measure-valued norm in space.

Let us comment further  on the norm in $\bM$ appearing in \Pb. First, we recall that $M(\omega)$ is a Banach space when endowed with the norm
\[
\|u\|_{M(\omega)} = \sup_{\|\phi\|_{C_0(\omega)} \le 1}\int_\omega\phi(x)\, du(x) = |u|(\omega),
\]
where $C_0(\omega) = \{\phi \in C(\bar\omega) : \phi(x) = 0 \ \forall x \in \partial\omega \cap \Gamma\}$ is a separable Banach space, and $|u|$ represents the total variation measure of $u$; see \cite[page 130]{Rudin70}. Note that $C_0(\omega) \neq C(\bar\omega)$ only in the case that $\bar \omega$ has a nonempty intersection with $\Gamma$.

For vector-valued measures we define
\begin{equation}
\|\bu\|_\bM = \max(\|u_1\|_{M(\omega)},\|u_2\|_{M(\omega)}),
\label{E1.3}
\end{equation}
which makes $\bM$ a Banach space. It is the dual space of $\bCo = C_0(\omega) \times C_0(\omega)$ when it is endowed with the norm $\|\bphi\|_\bCo = \|\phi_1\|_{C_0(\omega)} + \|\phi_2\|_{C_0(\omega)}$.

Hereafter we denote by $\lmo$ the space of weakly measurable functions $\bu:(0,T) \longrightarrow \bM$ satisfying $\|\bu\|_\lmo = \text{ess\,sup}_{t \in I}\|\bu(t)\|_\bM < \infty$. This norm makes $\lmo$ a Banach space and guarantees that it can be identified with the dual of $\lco$, where the duality relation is given by
\[
\langle \bu,\bz\rangle_{\lmo,\lco} = \int_0^T\langle \bu(t), \bz(t) \rangle_{\bM,\bCo}\, dt.
\]
The reader is referred to \cite[section 8.14.1 and Proposition 8.15.3]{Edwards1965} for the different notions of measurability and \cite[Theorem 8.18.2]{Edwards1965} for the duality identification.
(The distinction between weak and strong measurability is not required for the space $\lco$ because $\bCo$ is separable and hence both notions are equivalent; see \cite[Theorem 8.15.2]{Edwards1965}.). Observe that $\lmo$ is a subspace of $L^\infty(I;\bWmop)$ for every $p< 2$. Indeed, the embedding $\bWopc \subset \mathbf{C}_0(\Omega) \subset \bCo$ implies that the duality $\langle \bu(t),\bz\rangle$ is well defined for every $\bu \in \lmo$ and $\bz \in \bWopc$, and we have
\begin{align*}
|\langle \bu(t),\bz\rangle_{\bM,\bCo}| &\le \|\bu(t)\|_\bM\|\bz\|_\bCo\\
&\le C_{p,\Omega}\|\bu(t)\|_\bM\|\bz\|_\bWopc \le C_{p,\Omega}\|\bu\|_\lmo\|\bz\|_\bWopc
\end{align*}
for a.a. $t \in I$ and a constant $C_{p,\Omega}$ depending only on $p$ and $\Omega$. Analogously, we have that $\lmoq$ is a Banach space for the norm
\[
\|\bu|_\lmoq = \Big(\int_0^T\|\bu\|^q_\bM\, dt\Big)^{1/q},
\]
dual of $L^{q'}(I;\bCo)$. Obviously, the embedding $\lmo \subset \lmoq$ holds. The right hand side of the state equation, $\bef_0 + \chi_\omega\bu$, is well defined as an element of $L^q(I;\bWmop)$ for every $\bu \in \lmoq$.\vspace{2mm}

\noindent{\it{Structure of paper}.} In the following section, well-posed results on the state equation relevant for the remainder of the paper are summarized. Here we can rely on results from \cite{Casas-Kunisch2020}. Existence of solutions to $\Pb$ and first order optimality conditions are the contents of section 3. Necessary and sufficient second order optimality conditions will be given in section 4. This requires further  detailed analysis of the state equations and its linearization in functions spaces of low regularity.
\bigskip

\noindent\textbf{NOTATION}

In this paper, we denote $\bWos = W_0^{1,s}(\Omega) \times W_0^{1,s}(\Omega)$ for $s \in (1,\infty)$, and we choose as the norm in $\bWos$
\[
\|\by\|_\bWos = \|\nabla\by\|_\bLs = \left(\int_\Omega|\nabla\by|^s\, dx\right)^{\frac{1}{s}}  = \left(\int_\Omega[|\nabla y_1|^2 + |\nabla y_2|^2]^{\frac{s}{2}}\, dx\right)^{\frac{1}{s}}.
\]
We also consider the spaces
\begin{align*}
&\bH = \text{closure of }\{\bphi\in \mathbf{C}^\infty_0(\Omega):  \text{ div}\,  \bphi =0 \}\text{ in }\bLd,\\
&\bWs = \{\by \in \bWos : \div\by = 0\}.
\end{align*}
For $s = 2$ we set $\bHo = \mathbf{W}_0^{1,2}(\Omega)$ and $\bV = \mathbf{W}_2(\Omega)$.

We also define the following spaces
\begin{align*}
&\bWoT = \{\by \in L^2(I;\bV) : \frac{\partial \by}{\partial t} \in L^2(I;\bV')\},\\
&\bWrsoT = \{\by \in L^r(I;\bWs) : \frac{\partial \by}{\partial t} \in L^r(I;\bWsc')\},\\
&\bV^{2,1}(0,T) = \{\by \in L^2(I;\bHt \cap \bV) : \frac{\partial\by}{\partial t} \in L^2(I;\bH)\}
\end{align*}
with $r, s \in (1,\infty)$, endowed with the norms
\begin{align*}
&\|\by\|_\bWoT = \|\by\|_{L^2(I;\bHo)} + \|\frac{\partial \by}{\partial t}\|_{L^2(I;\bV')},\\
&\|\by\|_\bWrsoT = \|\by\|_{L^r(I;\bWos)} + \|\frac{\partial \by}{\partial t}\|_{L^r(I;\bWsc')},\\
&\|\by\|_{\bV^{2,1}(0,T)} = \|\by\|_{L^2(I;\bHt)} + \|\frac{\partial \by}{\partial t}\|_{L^2(I;\bH)}.
\end{align*}
Obviously these are reflexive Banach spaces, and $\bWoT = \bWrsoT$ if $r = s = 2$. Moreover, $\bWoT$ and $\bV^{2,1}(0,T)$ are Hilbert spaces.

Now we consider the interpolation space $\bBrs = (\bWsc',\bWs)_{1-1/r,r}$. From \cite[Chap. III/4.10.2]{Amann1995} we know that $\bWrsoT \subset C([0,T];\bBrs)$ and the trace mapping $\by \in \bWrsoT \to \by(0) \in \bBrs$ is surjective. If $r = s = 2$, then it is known that $\mathbf{B}_{2,2}(\Omega) = (\bV',\bV)_{\frac{1}{2},2} = \bH$. Hence, the embedding $\bWoT \subset C([0,T];\bH)$ holds; see \cite[Page 22, Proposition I-2.1]{Lions-Magenes68} and \cite[Page 143, Remark 3]{Triebel1978}.

\section{Analysis of the state equation}
\label{S2}
\setcounter{equation}{0}

The aim of this section is to study the well-posedness and differentiability of the mapping control-to-state. The results presented in this section are based on the analysis carried out in \cite{Casas-Kunisch2020}.

Let us consider the Banach space $\bY_0 = \bH + \bBqp$ with the norm
\[
\|\by_0\|_{\bY_0} =  \inf_{\by = \by_1 + \by_2}\|\by_1\|_\bLd + \|\by_2\|_\bBqp.
\]
It will be assumed that the initial state $\by_0$ in \eqref{E1.1} is an element of $\bY_0$. Now we introduce the following spaces:
\begin{align*}
&\bY = [L^2(I;\bV) \cap L^\infty(I;\bH)] + L^q(I;\bWp),\\
&\mY = \bWoT + \bWqpoT.
\end{align*}
They are Banach spaces with the norms
\begin{align*}
&\|\by\|_Y = \inf_{\by = \by_1 + \by_2}\|\by_1\|_{L^2(I;\bHo)} + \|\by_1\|_{L^\infty(I;\bLd)} + \|\by_2\|_{L^q(I;\bWop)},\\
&\|\by\|_\mY = \inf_{\by = \by_1 + \by_2}\|\by_1\|_\bWoT + \|\by_2\|_\bWqpoT.
\end{align*}
Note that $\mY \subset \bY$. Moreover, since $\bWoT$ and $\bWqpoT$ are reflexive spaces, then $\mY$ is reflexive as well. The solution of \eqref{E1.1} will be found in $\mY$.

\begin{definition}
Given $\bef_0 \in L^q(I,\bWmop)$, $\bu \in \lmoq$ and $\by_0 \in \bY$, we say that $\by \in \mY$ is a solution of \eqref{E1.1} if
\begin{equation}
\left\{
\begin{array}{l}
\displaystyle\frac{d}{dt}\langle\by(t),\bpsi\rangle_{\bWpc)' ,\bWpc} + a(\by(t),\bpsi) + b(\by(t),\by(t),\bpsi) \\
= \langle\bef_0(t),\bpsi\rangle_{\bWmop,\bWopc} + \langle\bu(t),\bpsi\rangle_{\bM,\bCo}\ \text{ in } (0,T),\ \ \forall \bpsi \in \bWpc,\\
\by(0) = \by_0,\end{array}\right.
\label{E2.1}
\end{equation}
where the system of differential equations is satisfied in the distribution sense and
\begin{align*}
&a(\by(t),\bpsi) = \nu\int_\Omega\nabla\by(x,t) : \nabla\bpsi(x)\, dx = \nu\sum_{i = 1}^2\int_\Omega\nabla y_i(x,t)\nabla\psi_i(x)\, dx,\\
&b(\by(t),\by(t),\bpsi) =  \int_\Omega[\by(t) \cdot \nabla]\by(t)\cdot\nabla\bpsi\, dx.
\end{align*}
A distribution $\p$ in $Q$ is called an associated pressure if the equation
\[
\frac{\partial \by}{\partial t} -\nu\Delta\by + (\by \cdot \bna)\by + \nabla\p = \bef_0 + \chi_\omega\bu\ \text{ in } Q
\]
is satisfied in the distribution sense. Then, $(\by,\p)$ is called a solution of \eqref{E1.1}.
\label{D2.1}
\end{definition}

Given $\by$ satisfying \eqref{E2.1}, the pressure $\p$ is obtained by using De Rham's theorem; see \cite[Lemma IV-1.4.1]{Sohr2001}. As pointed out in Section 1, the embeddings $\bWoT \subset C([0,T];\bH)$ and $\bWqpoT \subset C([0,T];\bBqp)$ hold. Hence, $\mY \subset C([0,T];\bY_0)$ and, consequently, the initial condition $\by(0) = \by_0$ with $\by_0 \in \bY_0$ makes sense.

The next theorem establishes the well-posedness of the state equation \eqref{E1.1}. It is an immediate consequence of \cite[Theorem 2.2]{Casas-Kunisch2020}.

\begin{theorem}
Suppose that $(\bef_0,\by_0) \in L^q(I,\bWmop) \times \bY_0$ and that \eqref{E1.2} holds. Then, system \eqref{E2.1} has a unique solution $(\by,\p) \in \mY \times W^{-1,q}(I;L^p(\Omega)/\mathbb{R})$ for every $\bu \in \lmoq$. Furthermore, there exists a nondecreasing function $\eta_{p,q}:[0,\infty) \longrightarrow [0,\infty)$ with $\eta_{p,q}(0) = 0$ such that
\begin{equation}
\|\by\|_\mY \le \eta_{p,q}\Big(\|\bef_0\|_{L^q(I;\bWpc')} + \|\bu\|_{L^q(I;\bWmop)} + \|\by_0\|_{\bY_0}\Big).
\label{E2.2}
\end{equation}
\label{T2.1}
\end{theorem}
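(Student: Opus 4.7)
The plan is to reduce the statement to \cite[Theorem 2.2]{Casas-Kunisch2020}, which already supplies existence, uniqueness, pressure recovery, and an estimate of the form (2.2) whenever the right-hand side belongs to $L^q(I;\bWmop)$ and the initial datum belongs to $\bY_0$. The only work specific to the present setting is to verify that the forcing term $\bef_0 + \chi_\omega \bu$ genuinely lies in $L^q(I;\bWmop)$ for every $\bu \in \lmoq$.

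For this step I would invoke the embedding argument already recorded in the introduction. Since $p < 2$, its conjugate satisfies $p' > 2$, so the two-dimensional Sobolev embedding yields $\bWopc \hookrightarrow \mathbf{C}(\bar\Omega)$; combined with the vanishing boundary trace of elements of $\bWopc$, restriction to $\omega$ maps $\bWopc$ continuously into $\bCo$ with a constant $C_{p,\Omega}$. Dualizing, one obtains $\|\chi_\omega \bu(t)\|_\bWmop \le C_{p,\Omega}\|\bu(t)\|_\bM$ for a.a.\ $t \in I$, and hence $\|\chi_\omega \bu\|_{L^q(I;\bWmop)} \le C_{p,\Omega}\|\bu\|_\lmoq$. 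In particular $\chi_\omega \bu \in L^q(I;\bWmop)$, and the total source $\bef_0 + \chi_\omega \bu$ is an admissible datum.

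With the source in $L^q(I;\bWmop)$ and $\by_0 \in \bY_0$ (which, as observed in Section 1, is precisely the trace space of $\mY$ at $t = 0$, since $\bWoT \hookrightarrow C([0,T];\bH)$ and $\bWqpoT \hookrightarrow C([0,T];\bBqp)$), the hypotheses of \cite[Theorem 2.2]{Casas-Kunisch2020} are satisfied. That theorem delivers the unique solution $(\by,\p) \in \mY \times W^{-1,q}(I;L^p(\Omega)/\mathbb{R})$ (the pressure being extracted from the momentum equation by De~Rham's theorem, \cite[Lemma IV-1.4.1]{Sohr2001}) together with a bound $\|\by\|_\mY \le \tilde\eta_{p,q}\big(\|\bef_0 + \chi_\omega \bu\|_{L^q(I;\bWmop)} + \|\by_0\|_{\bY_0}\big)$ for a nondecreasing $\tilde\eta_{p,q}$ with $\tilde\eta_{p,q}(0) = 0$. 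Applying the triangle inequality to the norm of the source, using the norm-preserving extension $\|\chi_\omega \bu\|_{L^q(I;\bWmop)} \le \|\bu\|_{L^q(I;\bWmop)}$, and exploiting the monotonicity of $\tilde\eta_{p,q}$, one obtains (2.2) directly with $\eta_{p,q} = \tilde\eta_{p,q}$.

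Since the cited theorem shoulders all of the analytical weight, the only point demanding independent attention is the embedding $\lmoq \hookrightarrow L^q(I;\bWmop)$, and this is the one place where the restriction $p < 2$ is actually used. Accordingly, I do not expect any genuine obstacle: the proof is essentially a bookkeeping argument verifying that the measure-valued control, after extension by zero, is compatible with the functional setting already handled in \cite{Casas-Kunisch2020}.
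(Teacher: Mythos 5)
Your proposal is correct and follows essentially the same route as the paper, which simply declares the theorem ``an immediate consequence of \cite[Theorem 2.2]{Casas-Kunisch2020}'' after having already verified, in Section 1, the embedding $\lmoq \subset L^q(I;\bWmop)$ via $\bWopc \subset \mathbf{C}_0(\Omega) \subset \bCo$. The only difference is that you spell out explicitly the bookkeeping (duality estimate, triangle inequality, monotonicity of $\eta_{p,q}$) that the paper leaves implicit.
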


Now, we introduce the mapping $G:\lmoq \longrightarrow \mY$ associating to each control $\bu \in \lmoq$ the solution $\by_\bu \in \mY$ of \eqref{E1.1}. Then we have the following differentiability result.

\begin{theorem}
$G$ is of class $C^\infty$. Further, given $\bu, \bv, \bv_1, \bv_2 \in \lmoq$ we have that $\bz_\bv = G'(\bu)\bv$ and $\bz_{\bv_1,\bv_2} = G''(\bu)(\bv_1,\bv_2)$ are the unique solutions in $\mY$ of the Oseen systems
\begin{equation}
\left\{\begin{array}{l}\displaystyle\frac{\partial \bz}{\partial t} -\nu\Delta\bz + (\by_\bu \cdot \bna)\bz + (\bz \cdot \bna)\by_\bu + \nabla\q = \chi_\omega\bv\ \text{ in } Q,\\[1.2ex]\div\bz = 0 \ \text{ in } Q, \ \bz = 0 \ \text{ on } \Sigma,\ \bz(0) = 0 \text{ in } \Omega,\end{array}\right.
\label{E2.3}
\end{equation}
and
\begin{equation}
\left\{\begin{array}{l}\displaystyle\frac{\partial \bz}{\partial t} -\nu\Delta\bz + (\by_\bu \cdot \bna)\bz + (\bz \cdot \bna)\by_\bu + \nabla\q = - (\bz_{\bv_2} \cdot \bna)\bz_{\bv_1} - (\bz_{\bv_1} \cdot \bna)\bz_{\bv_2}\ \text{ in } Q,\\[1.2ex]\div\bz = 0 \ \text{ in } Q, \ \bz = 0 \ \text{ on } \Sigma,\ \bz(0) = 0 \text{ in } \Omega,\end{array}\right.
\label{E2.4}
\end{equation}
respectively, where $\by_\bu = G(\bu)$ and $\bz_{\bv_i} = G'(\bu)\bv_i$ for $ i = 1, 2$.
\label{T2.2}
\end{theorem}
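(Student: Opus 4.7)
The plan is to cast the problem into the framework of the implicit function theorem (IFT) applied to the nonlinear map $F:\lmoq\times\mY\to\mZ$ defined by
\[
F(\bu,\by) \;=\; \Big(\tfrac{\partial \by}{\partial t} -\nu\Delta\by + (\by\cdot\bna)\by + \nabla\p -\bef_0 -\chi_\omega\bu,\ \by(0)-\by_0\Big),
\]
where $\mZ := L^q(I;\bWmop)\times\bY_0$ (the pressure term is understood in the distributional sense of Definition \ref{D2.1}, after quotienting out gradients via De Rham). By Theorem \ref{T2.1}, $F(\bu,G(\bu))=0$ uniquely characterises $G(\bu)$. The whole proof reduces to verifying (i) that $F$ is of class $C^\infty$, and (ii) that $F_\by(\bu,\by_\bu):\mY\to\mZ$ is a Banach space isomorphism.

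For (i), observe that $F$ is affine in $\bu$ and polynomial of degree two in $\by$: the only nonlinear contribution is the trilinear form $b(\by,\by,\cdot)$, which is continuous on $\mY\times\mY$ with values in $L^q(I;\bWmop)$ by the embedding-and-interpolation estimates established in \cite{Casas-Kunisch2020}. A continuous bilinear map is automatically real-analytic, so $F\in C^\infty$ with $F_{\by\by}(\bu,\by)(\bz_1,\bz_2)=(\bz_1\cdot\bna)\bz_2+(\bz_2\cdot\bna)\bz_1$ and all higher derivatives vanishing. In particular $F_\bu(\bu,\by)\bv=-\chi_\omega\bv$ is independent of $(\bu,\by)$.

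For (ii), the claim is that, given any $(\bg,\by_1)\in\mZ$, the linear Oseen system
\[
\tfrac{\partial \bz}{\partial t}-\nu\Delta\bz+(\by_\bu\cdot\bna)\bz+(\bz\cdot\bna)\by_\bu+\nabla\q=\bg,\quad \div\bz=0,\quad \bz(0)=\by_1,
\]
has a unique solution $\bz\in\mY$ with continuous dependence on data. This is precisely the linearisation of \eqref{E1.1} at $\by_\bu$, and the same fixed-point/maximal-regularity machinery of \cite{Casas-Kunisch2020} that produces Theorem \ref{T2.1} applies, with the significant simplification that the equation is now linear; in particular the a priori bound of type \eqref{E2.2} combined with the Fredholm alternative (uniqueness $\Rightarrow$ existence in reflexive Banach settings for linear elliptic/parabolic operators) yields the isomorphism property. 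I expect this linear well-posedness step to be the main obstacle, because the coefficient $\by_\bu$ lives only in $\mY$ and the right-hand side is in a dual space with measure-type regularity, so controlling the drift term $(\bz\cdot\bna)\by_\bu$ in $L^q(I;\bWmop)$ requires the same delicate interpolation used for the nonlinear analysis; the argument is not a free off-the-shelf application of standard Oseen theory.

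With (i) and (ii) in hand, the IFT yields $G\in C^\infty(\lmoq;\mY)$ in a neighbourhood of any $\bu$, and the identity $F(\bu,G(\bu))\equiv 0$ differentiates to
\[
F_\by(\bu,\by_\bu)\,G'(\bu)\bv + F_\bu(\bu,\by_\bu)\bv = 0,
\]
which is exactly \eqref{E2.3}. A second differentiation in directions $\bv_1,\bv_2$, using that $F_{\bu\bu}=F_{\bu\by}=0$ and that $F_{\by\by}$ is the bilinear form computed above, gives
\[
F_\by(\bu,\by_\bu)\,G''(\bu)(\bv_1,\bv_2) + F_{\by\by}(\bu,\by_\bu)\big(G'(\bu)\bv_1,G'(\bu)\bv_2\big)=0,
\]
which after substituting $G'(\bu)\bv_i=\bz_{\bv_i}$ and the explicit form of $F_{\by\by}$ is precisely \eqref{E2.4}. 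Uniqueness of $\bz_\bv$ and $\bz_{\bv_1,\bv_2}$ in $\mY$ is guaranteed by the isomorphism property established in step (ii).
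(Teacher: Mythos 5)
Your outline is a legitimate route to the statement, but it is not the route the paper takes, and it leaves the hardest step as an assertion. The paper's proof is a two-line reduction: it factors $G = G_0\circ B$, where $B\bu = \bef_0+\chi_\omega\bu$ is a continuous affine map from $\lmoq$ into $L^q(I;\bWmop)$ and $G_0$ is the solution operator of \eqref{E2.5} with right-hand side in $L^q(I;\bWmop)$; the $C^\infty$ property of $G_0$ and the formulas for its first and second derivatives are imported wholesale from \cite[Theorem 5.1]{Casas-Kunisch2020}, and the chain rule does the rest. What you do instead is reconstruct the implicit-function-theorem argument that presumably underlies that cited theorem. The gain of your version is that it is self-contained in structure and makes the mechanism of \eqref{E2.3}--\eqref{E2.4} transparent (first and second differentiation of $F(\bu,G(\bu))=0$, with $F$ quadratic in $\by$); the cost is that you must then actually prove point (ii), which is exactly the content the paper avoids re-proving by citing its companion work.

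On that point (ii) your argument as written has a gap. You correctly identify that the isomorphism property of $F_\by(\bu,\by_\bu):\mY\to\mZ$ amounts to well-posedness in $\mY$ of the Oseen system with coefficient $\by_\bu\in\mY$ and data in $L^q(I;\bWmop)\times\bY_0$, and you correctly flag this as the main obstacle, but the justification offered — an a priori bound of type \eqref{E2.2} plus ``the Fredholm alternative (uniqueness $\Rightarrow$ existence)'' — is not a proof. Uniqueness implies surjectivity only for Fredholm operators of index zero; to invoke this you would have to exhibit the drift terms $(\by_\bu\cdot\bna)\bz+(\bz\cdot\bna)\by_\bu$ as a \emph{compact} perturbation of the Stokes operator between precisely these spaces (the sum space $\mY$ on one side, $L^q(I;\bWmop)$ on the other), which is a nontrivial interpolation/compact-embedding argument in its own right, not a formality. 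In the paper this linear well-posedness is exactly what \cite[Theorem 2.7]{Casas-Kunisch2020} and the surrounding results supply, and the later lemmas of Section 4 (e.g.\ Lemma \ref{L4.4}) show how delicate such isomorphism arguments are in these low-regularity spaces — there the authors run a continuity-method and Lions-lemma argument rather than a bare Fredholm appeal. So either cite the companion result explicitly at this step, or supply the compactness argument; as it stands the isomorphism claim is the one load-bearing assertion in your proof that is not actually established.
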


\begin{proof}
Let $G_0:L^q(I;\bWmop) \longrightarrow \mY$ be defined by $G_0(\bef) = \by_\bef$ with $\by_\bef$ the solution of the system
\begin{equation}
\left\{\begin{array}{l}\displaystyle\frac{\partial \by}{\partial t} -\nu\Delta\by + (\by \cdot \bna)\by + \nabla\p = \bef\ \text{ in } Q = \Omega \times I,\\[1.2ex]\div\by = 0 \ \text{ in } Q, \ \by = 0 \ \text{ on } \Sigma = \Gamma \times I,\ \by(0) = \by_0 \text{ in } \Omega.\end{array}\right.
\label{E2.5}
\end{equation}
Then, we have that $G(\bu) = (G_0 \circ B)(\bu)$ with $B:\lmoq \longrightarrow L^q(I;\bWmop)$ given by $B\bu = \bef_0 + \chi_\omega\bu$. The statement of the theorem is a straightforward consequence of the chain rule and \cite[Theorerm 5.1]{Casas-Kunisch2020}.
\end{proof}

We finish this section proving the a continuity result for $G$.

\begin{theorem}
Let $\{\bu_k\}_{k = 1}^\infty \subset \lmoq$ be a sequence such that $\bu_k \stackrel{*}{\rightharpoonup}\bu$ in $\lmoq$, then $\by_{\bu_k} \rightharpoonup \by_\bu$ in $\mY$ and $\by_{\bu_k} \to \by_\bu$ in $L^2(I;\bH_{2p})$, where $\bH_{2p} = \bH \cap \mathbf{L}^{2p}(\Omega)$.
\label{T2.3}
\end{theorem}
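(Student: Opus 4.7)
The plan is to combine weak compactness of $\{\by_{\bu_k}\}$ in $\mY$ with Aubin--Lions compactness into $L^2(I;\bH_{2p})$, pass to the limit in the weak formulation \eqref{E2.1}, and invoke the uniqueness part of Theorem \ref{T2.1} together with the standard subsequence-of-subsequence argument to extend convergence to the entire sequence. The Banach--Steinhaus theorem yields $\sup_k \|\bu_k\|_\lmoq < \infty$; together with the continuous embedding $\bM \hookrightarrow \bWmop$ recorded in the introduction, this produces a uniform bound for $\{\chi_\omega\bu_k\}$ in $L^q(I;\bWmop)$, and the a priori estimate \eqref{E2.2} then gives $\sup_k \|\by_{\bu_k}\|_\mY < \infty$. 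By reflexivity of $\mY$ I extract a subsequence (not relabeled) with $\by_{\bu_k} \rightharpoonup \tilde\by$ in $\mY$.

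For the strong convergence I fix a decomposition $\by_{\bu_k} = \bw_k + \bv_k$ with $\|\bw_k\|_\bWoT + \|\bv_k\|_\bWqpoT \le 2\|\by_{\bu_k}\|_\mY$. In two space dimensions the Rellich--Sobolev embeddings $\bHo \hookrightarrow\hookrightarrow \mathbf{L}^{2p}(\Omega)$ (any finite exponent) and $\bWop \hookrightarrow\hookrightarrow \mathbf{L}^{2p}(\Omega)$ (compact since $p \ge 4/3 > 1$ gives $2p < 2p/(2-p)$) hold, while $\mathbf{L}^{2p}(\Omega) \hookrightarrow \bV'$ and $\mathbf{L}^{2p}(\Omega) \hookrightarrow \bWpc'$ continuously. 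Aubin--Lions applied to each piece yields, along a further subsequence, $\bw_k \to \bw$ in $L^2(I;\mathbf{L}^{2p}(\Omega))$ and $\bv_k \to \bv$ in $L^q(I;\mathbf{L}^{2p}(\Omega)) \subset L^2(I;\mathbf{L}^{2p}(\Omega))$ (using $q > 2$). Adding the pieces and identifying weak limits in $L^2(I;\bLd)$ forces $\tilde\by = \bw + \bv$, hence $\by_{\bu_k} \to \tilde\by$ in $L^2(I;\mathbf{L}^{2p}(\Omega))$. Since each $\by_{\bu_k}$ is divergence-free so is $\tilde\by$, and since $\mathbf{L}^{2p}(\Omega) \hookrightarrow \bLd$ on bounded $\Omega$, the convergence actually takes place in $L^2(I;\bH_{2p})$.

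It remains to pass to the limit in \eqref{E2.1} tested against $\theta(\cdot)\bpsi$ with $\theta \in C_c^\infty(0,T)$ and $\bpsi \in \bWpc$. The time-derivative and diffusion terms pass by weak convergence in $\mY$; the control term converges by the weak-$*$ hypothesis, since $\bWopc \hookrightarrow \bCo$ in 2D (as $p' > 2$) places $\theta(\cdot)\bpsi$ in $L^{q'}(I;\bCo)$. For the convective term I write
\[
b(\by_{\bu_k},\by_{\bu_k},\bpsi) - b(\tilde\by,\tilde\by,\bpsi) = b(\by_{\bu_k} - \tilde\by,\by_{\bu_k},\bpsi) + b(\tilde\by,\by_{\bu_k} - \tilde\by,\bpsi),
\]
integrate by parts to move $\nabla$ onto $\bpsi$ using divergence-freeness of the first argument, and estimate each piece by $C\|\bpsi\|_\bWpc(\|\by_{\bu_k}\|_{\mathbf{L}^{2p}(\Omega)} + \|\tilde\by\|_{\mathbf{L}^{2p}(\Omega)})\|\by_{\bu_k} - \tilde\by\|_{\mathbf{L}^{2p}(\Omega)}$, which is H\"older-admissible because $\frac{1}{2p} + \frac{1}{2p} + \frac{1}{p'} = 1$. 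Cauchy--Schwarz in time and the strong $L^2(I;\bH_{2p})$ convergence then close the limit passage. The initial condition transfers because $\mY \hookrightarrow C([0,T];\bY_0)$ continuously, so $\by_{\bu_k}(0) = \by_0$ forces $\tilde\by(0) = \by_0$. Uniqueness in Theorem \ref{T2.1} identifies $\tilde\by = \by_\bu$, and the usual subsequence argument promotes the convergence to the whole sequence. The main obstacle is the convective term: it is precisely the compactness upgrade from $\bH$ to $\bH_{2p}$, afforded by the two-dimensional Sobolev theory, that makes the passage work and explains the presence of $\bH_{2p}$ in the statement.
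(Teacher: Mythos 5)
Your proposal is correct and follows essentially the same route as the paper: uniform boundedness of the controls, the a priori estimate \eqref{E2.2}, weak compactness in the reflexive space $\mY$, Aubin--Lions compactness of $\bWoT$ and $\bWqpoT$ into $L^2(I;\bH_{2p})$ and $L^q(I;\bH_{2p})$ respectively to get strong convergence, limit passage in \eqref{E2.1} with the trilinear term handled via antisymmetry, and identification of the limit by uniqueness plus the subsequence principle. Your only (harmless) deviations are that you make the decomposition $\by_{\bu_k}=\bw_k+\bv_k$ with controlled norms explicit before applying Aubin--Lions to each piece -- which is in fact the careful way to justify the compactness of $\mY\subset L^2(I;\bH_{2p})$ that the paper asserts more briefly -- and that you split the convective difference bilinearly rather than writing $b(\by_{\bu_k},\by_{\bu_k},\bpsi)=-b(\by_{\bu_k},\bpsi,\by_{\bu_k})$ as the paper does; both are standard and equivalent.
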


\begin{proof}
The boundedness of $\{\bu_k\}_{k = 1}^\infty$ in $\lmoq$ along with the estimate \eqref{E2.2} implies the boundedness of $\{\by_{\bu_k}\}_{k = 1}^\infty$ in $\mY$. Since $\mY$ is reflexive, there exists a subsequence, denoted in the same way, such that $\by_{\bu_k} \rightharpoonup \by$ in $\mY$. Now, we pass to the limit in equation \eqref{E2.1} satisfied by every pair $(\by_{\bu_k},\bu_k)$. In this process, the only difficulty is found in the nonlinear term $b(\by_{\bu_k},\by_{\bu_k},\bpsi)$. To deal with it we use a compact embedding. Using the Sobolev embeddings $\bV \subset \bH_{2p} \subset \bV^*$ and $\bWp \subset \bH_{2p} \subset \bWpc^*$, which are compact, we have the compactness of the embeddings $\bWoT \subset L^2(I;\bH_{2p})$ and $\bWqpoT \subset L^q(I;\bH_{2p})$, see \cite[Theorem III-2.1]{Temam79}. Since $q > 4$, we get that the embedding $\mY \subset L^2(I;\bH_{2p})$ is compact. Hence, we deduce that $\by_{\bu_k} \to \by$ strongly in $L^2(I;\bH_{2p})$. Finally, given $\bpsi \in \bWpc$ and using the antisymmetric property of $b$ we get
\[
b(\by_{\bu_k},\by_{\bu_k},\bpsi) = -b(\by_{\bu_k},\bpsi,\by_{\bu_k}) \to -b(\by,\bpsi,\by) = b(\by,\by,\bpsi)\ \text{ strongly in }\ L^1(I).
\]
Therefore, $\by$ satisfies equation \eqref{E2.1} and, hence, $\by = \by_{\bu}$. Since every convergent subsequence of $\{\by_{\bu_k}\}_{k = 1}^\infty$ converges to the same limit $\by_{\bu}$, we conclude that the whole sequence converges as claimed in the theorem to $\by_{\bu}$.
\end{proof}

\section{Existence of solutions of \Pb and first order optimality conditions}
\label{S3}
\setcounter{equation}{0}

We start this section by proving the existence of solutions for the control problem \Pb. Then, we show the differentiability of the cost functional and deduce the first order necessary optimality conditions. From these conditions we infer the sparsity properties of the stationary controls.

\begin{theorem}
There exists at least one solution $\bar\bu$ of \Pb.
\label{T3.1}
\end{theorem}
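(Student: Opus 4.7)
The plan is to follow the classical direct method of the calculus of variations, exploiting the fact that $\uad$ is precisely the closed ball of radius $\gamma$ in $\lmo$ and that this space is a dual space with a separable predual. Once a weak-$*$ convergent minimizing sequence is in hand, the continuity result in Theorem \ref{T2.3} handles the passage to the limit in the tracking term.

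Since $J \ge 0$, pick any minimizing sequence $\{\bu_k\}_{k=1}^\infty \subset \uad$, so that $J(\bu_k) \to \inf_{\uad} J$. The definition of $\uad$ is equivalent to $\|\bu_k\|_\lmo \le \gamma$, so that $\uad$ is nothing but the closed ball of radius $\gamma$ in $\lmo$. Because $\lmo$ has been identified in the introduction with the dual of the separable Banach space $\lco = L^1(I;\bCo)$, the Banach--Alaoglu theorem yields a subsequence, still denoted $\{\bu_k\}$, and some $\bar\bu \in \lmo$ with $\|\bar\bu\|_\lmo \le \gamma$, i.e. $\bar\bu \in \uad$, such that $\bu_k \stackrel{*}{\rightharpoonup} \bar\bu$ in $\lmo$.

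Next I transfer this weak-$*$ convergence to $\lmoq$. Since $I$ is bounded and $q' < \infty$, every $\bphi \in L^{q'}(I;\bCo)$ also lies in $L^1(I;\bCo) = \lco$ by H\"older's inequality, so testing against such $\bphi$ gives $\bu_k \stackrel{*}{\rightharpoonup} \bar\bu$ in $\lmoq$ as well. Theorem \ref{T2.3} then provides $\by_{\bu_k} \to \by_{\bar\bu}$ strongly in $L^2(I;\bH_{2p})$. Since $\bH_{2p} \subset \bH \subset \bLd$, this convergence upgrades to strong convergence in $\bLqd$, and hence $J(\bar\bu) = \lim_{k \to \infty} J(\bu_k) = \inf_{\uad} J$, establishing that $\bar\bu$ is a minimizer.

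The only point that needs care is the weak-$*$ closedness of $\uad$, but this is immediate from the identification $\uad = \{\bu \in \lmo : \|\bu\|_\lmo \le \gamma\}$. No genuine lower semicontinuity obstacle arises: because the compact embedding invoked in Theorem \ref{T2.3} already delivers strong convergence in $\bLqd$, the nonlinearity and nonconvexity of the control-to-state map $\bu \mapsto \by_\bu$ do not pose any difficulty at the level of existence.
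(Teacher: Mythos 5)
Your proof is correct and follows essentially the same route as the paper: identify $\uad$ as the closed ball of $\lmo = \lco^*$ with $\lco$ separable, extract a weak-$*$ convergent minimizing subsequence, and invoke Theorem \ref{T2.3} to pass to the limit in the tracking functional. The extra details you supply (transferring weak-$*$ convergence from $\lmo$ to $\lmoq$ and checking weak-$*$ closedness of the ball) are steps the paper leaves implicit, and they are handled correctly.
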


\begin{proof}
First, we observe that $\uad$ is the closed ball of $\lmo$ centered at $\mathbf{0}$ and radius $\gamma$. Moreover, $\lco$ is a separable Banach space and $\lmo = \lco^*$. Hence, given a minimizing sequence $\{\bu_k\}_{k = 1}^\infty$ for \Pb, there exists a subsequence, denoted in the same way, such that $\bu_k \stackrel{*}{\rightharpoonup} \bar\bu$ in $\lmo$. Then, Theorem \ref{T2.3} implies that $\by_{\bu_k} \to \by_{\bar\bu}$ in $L^2(Q)$. Therefore, $J(\bu_k) \to J(\bar\bu) = \inf\Pb$ holds. Thus, $\bar\bu$ is a solution of \Pb.
\end{proof}

Before stating the optimality conditions satisfied by a solution of \Pb, we analyze the differentiability of the cost functional.
\begin{theorem}
The cost functional $J:\lmoq \longrightarrow \mathbb{R}$ is of class $C^\infty$ and the following identities hold
\begin{align}
&J'(\bu)\bv = \int_0^T\langle\bv(t),\bvarphi_\bu(t)\rangle_{\bM,\bCo}\, dt,\label{E3.1}\\
&J''(\bu)\bv^2 = \int_Q\left\{|\bz_\bv|^2 + 2(\bz_\bv\cdot\bna)\bvarphi_\bu\bz_\bv\right\}\, dx\, dt, \label{E3.2}
\end{align}
for all $\bv \in \lmoq$, where $\bz_\bv = G'(\bu)\bv$ and $\bvarphi_\bu \in \bV^{2,1}(0,T)$ is the adjoint state, the unique solution along with the pressure $\pi_\bu$ of
\begin{equation}
\left\{\begin{array}{l}\displaystyle-\frac{\partial\bvarphi}{\partial t} - \nu\Delta\bvarphi - (\by_\bu \cdot \bna)\bvarphi - (\bna\bvarphi)^T\by_\bu + \nabla \pi = \by_\bu - \by_d\ \text{ in } Q,\\[1.2ex]\div\bvarphi = 0 \ \text{ in } Q, \ \bvarphi = 0 \ \text{ on } \Sigma,\ \bvarphi(T) = 0 \ \text{ in } \Omega.\end{array}\right.
\label{E3.3}
\end{equation}
\label{T3.2}
\end{theorem}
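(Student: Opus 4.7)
The plan is to factor $J = F \circ G$, where $G:\lmoq \to \mY$ is the control-to-state map of Theorem \ref{T2.2} and $F:\bLqd \to \mathbb{R}$ is the smooth quadratic functional $F(\by) = \frac{1}{2}\|\by - \by_d\|_{\bLqd}^2$. The embedding $\mY \subset L^2(I;\bH_{2p}) \subset \bLqd$ noted in the proof of Theorem \ref{T2.3} makes this factorization meaningful; since $F$ is $C^\infty$ and $G$ is $C^\infty$ by Theorem \ref{T2.2}, the chain rule gives $J \in C^\infty(\lmoq;\mathbb{R})$ together with
\begin{align*}
J'(\bu)\bv &= \int_Q (\by_\bu - \by_d)\cdot \bz_\bv\, dx\, dt,\\
J''(\bu)\bv^2 &= \int_Q |\bz_\bv|^2\, dx\, dt + \int_Q (\by_\bu - \by_d)\cdot \bz_{\bv,\bv}\, dx\, dt,
\end{align*}
where $\bz_\bv = G'(\bu)\bv$ and $\bz_{\bv,\bv} = G''(\bu)\bv^2$ solve \eqref{E2.3} and \eqref{E2.4} with $\bv_1 = \bv_2 = \bv$.

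Next I would construct the adjoint state $\bvarphi_\bu$ as the unique strong solution of the backward system \eqref{E3.3}. Since $\by_\bu - \by_d \in \bLqd$ and the convection coefficient $\by_\bu$ has enough regularity in two dimensions (inherited from $\mY$), reversing time and applying the classical maximal $L^2$-regularity theory for Oseen operators on the $C^3$-domain $\Omega$ yields a unique $\bvarphi_\bu \in \bV^{2,1}(0,T)$ together with a pressure $\pi_\bu \in L^2(I;\bLmt)$. The two-dimensional Sobolev embedding $\bHt \hookrightarrow \mathbf{C}(\bar\Omega)$ then gives $\bvarphi_\bu \in L^2(I;\bCo) \subset L^1(I;\bCo)$, so the pairing $\langle\bv(t),\bvarphi_\bu(t)\rangle_{\bM,\bCo}$ is measurable and integrable in $t$ for every $\bv \in \lmoq$.

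To derive \eqref{E3.1} I would test \eqref{E2.3} against $\bvarphi_\bu$ and \eqref{E3.3} against $\bz_\bv$ and subtract. The conditions $\bz_\bv(0)=0$ and $\bvarphi_\bu(T)=0$ kill the time-boundary contributions, incompressibility together with the vanishing spatial traces removes the pressure terms, and the convective contributions $(\by_\bu\cdot\bna)\bz_\bv$ and $(\bz_\bv\cdot\bna)\by_\bu$ from \eqref{E2.3} cancel against $-(\by_\bu\cdot\bna)\bvarphi_\bu - (\bna\bvarphi_\bu)^T\by_\bu$ from \eqref{E3.3} after a transposition, leaving
\[
\int_Q (\by_\bu - \by_d)\cdot\bz_\bv\, dx\, dt = \int_0^T \langle\bv(t),\bvarphi_\bu(t)\rangle_{\bM,\bCo}\, dt.
\]
Applying the same duality identity to \eqref{E2.4} with $\bv_1 = \bv_2 = \bv$ gives
\[
\int_Q (\by_\bu - \by_d)\cdot\bz_{\bv,\bv}\, dx\, dt = -2\int_Q \bvarphi_\bu\cdot (\bz_\bv\cdot\bna)\bz_\bv\, dx\, dt = 2\int_Q ((\bz_\bv\cdot\bna)\bvarphi_\bu)\cdot\bz_\bv\, dx\, dt,
\]
where the last step uses $\div\bz_\bv = 0$ and $\bz_\bv|_\Sigma = 0$. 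Substituting into the chain-rule expressions above produces \eqref{E3.1}--\eqref{E3.2}.

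The main obstacle I expect is making these integration-by-parts identities rigorous in the low-regularity setting: $\bz_\bv$ only lies in $\mY$, a space tailored to accommodate measure-valued right-hand sides, while the naive test-function calculation implicitly asks for more regularity. The natural workaround is to approximate $\bv \in \lmoq$ by a sequence $\{\bv_k\}$ of smooth controls for which all manipulations are classical, carry out the identities at that level, and pass to the limit using the linear analogue of Theorem \ref{T2.3} for the Oseen systems \eqref{E2.3} and \eqref{E2.4}, together with the estimate \eqref{E2.2}, noting that the inclusion $\bvarphi_\bu \in L^2(I;\bCo) \subset \lco$ makes the right-hand side of \eqref{E3.1} weak-$*$ continuous on $\lmoq$. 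The $C^3$ regularity of $\Gamma$ is used precisely to place $\bvarphi_\bu$ into $\bV^{2,1}(0,T)$ via maximal regularity.
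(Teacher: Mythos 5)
Your overall architecture (factor $J=F\circ G$, apply the chain rule via Theorem \ref{T2.2}, then obtain \eqref{E3.1}--\eqref{E3.2} by testing the Oseen systems \eqref{E2.3}--\eqref{E2.4} against the adjoint state and integrating by parts) is exactly the paper's outline, and your duality computations, including the sign in the second-order term, are correct. The genuine gap is in the sentence where you claim that ``the convection coefficient $\by_\bu$ has enough regularity in two dimensions (inherited from $\mY$)'' so that ``classical maximal $L^2$-regularity theory for Oseen operators'' yields $\bvarphi_\bu\in\bV^{2,1}(0,T)$. That is precisely the substantive content of the theorem, and it cannot be delegated to the classical theory: $\by_\bu$ is not a Leray--Hopf field here, but an element of $\mY=\bWoT+\bWqpoT$, whose second summand has only the low spatial regularity $L^q(I;\bWp)$ with $p<2$. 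Two things must actually be proved. First, that $\mY\subset L^4(I;\bLf)$, which the paper obtains by splitting $\by_\bu=\by_1+\by_2$, applying the Gagliardo inequality $\|\by_1\|^4_\bLf\le C\|\by_1\|^2_\bLd\|\by_1\|^2_\bHo$ to the $\bWoT$ part, and using $\bWqpoT\subset L^q(I;\bWp)\subset L^4(I;\bLf)$ (this is where \eqref{E1.2} enters). Second, even with $\by_\bu\in L^4(I;\bLf)$ a direct perturbation of Stokes maximal regularity fails, because $(\by_\bu\cdot\bna)\bvarphi$ is not in $L^2(Q)$ for $\bvarphi$ merely in the energy class; one needs the interpolation $\|\nabla\bvarphi\|_\bLf\le C\|\nabla\bvarphi\|^{1/2}_\bLd\|\bvarphi\|^{1/2}_\bHt$, Young's inequality to absorb $\|A\bvarphi\|^2_\bLd$, and a Gronwall argument. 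The paper carries this out on a Faedo--Galerkin approximation (eigenfunctions of the Stokes operator), testing first with $A\bvarphi_k$ to bound $\|\bvarphi_k\|_{L^2(I;\bHt)}$ and then with $\bvarphi_k'$ to bound $\|\bvarphi_k'\|_\bLqd$, which also furnishes the quantitative estimate \eqref{E3.12} used later. Your proposal as written asserts the conclusion of this analysis rather than supplying it.

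Your final paragraph, on justifying the integration by parts for $\bz_\bv\in\mY$ by smoothing the controls and passing to the limit, is a reasonable and slightly more explicit treatment than the paper, which simply states that \eqref{E3.1}--\eqref{E3.2} follow from \eqref{E2.3}, \eqref{E2.4} and \eqref{E3.3}; that part is fine.
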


\begin{proof}
The differentiability of $J$ is a consequence of the chain rule and Theorem \ref{T2.2}. The expressions \eqref{E3.1} and \eqref{E3.2} follow from \eqref{E2.3}, \eqref{E2.4} and \eqref{E3.3}. We only have to prove that \eqref{E3.3} has a unique solution that belongs to $\bV^{2,1}(0,T)$. To this end, let us consider the classical operator associated with the Stokes system $A:\bV \longrightarrow \bV'$ given by $\langle A\bpsi,\bphi\rangle_{\bV',\bV} = a(\bpsi,\bphi)$ $\forall \bpsi, \bphi \in \bV$. As usual, we take a basis $\{\bpsi_j\}_{j = 1}^\infty$ of $\bV$ formed by eigenfunctions of $A$: $A\bpsi_j = \lambda_j\bpsi_j$ with $\{\lambda_j\}_{j = 1}^\infty \subset (0,\infty)$, $j \ge 1$. We assume that $\{\bpsi_j\}_{j = 1}^\infty$ is orthonormal for the Hilbert product in $\bH$: $(\psi_i,\bpsi_j)_\bLd = \delta_{ij}$. Let us denote by $\bV_k$ the subspace generated by $\{\bpsi_1,\ldots,\bpsi_k\}$. Following the classical Faedo-Galerkin approach, we discretize \eqref{E3.3}
\begin{equation}
\left\{\hspace{-0.25cm}
\begin{array}{l}
\displaystyle -\frac{d}{dt}(\bvarphi_k(t),\bpsi_j)_\bLd + a(\bvarphi_k(t),\bpsi_j) - b(\by_{\bu}(t),\bvarphi_k(t),\bpsi_j) \vspace{2mm}\\
- b(\bpsi_j,\bvarphi_k(t),\by_{\bu}(t)) = (\by_{\bu}(t) - \by_d(t),\bpsi_j)_\bLd\text{ in } (0,T),\ 1 \le j \le k,\\
\bvarphi_k(T) = 0,
\end{array}\right.
\label{E3.4}
\end{equation}
where $\bvarphi_k(t) = \sum_{j = 1}^kg_{k,j}(t)\bpsi_j$. Arguing analogously as in \cite[Proof of Theorem 2.7]{Casas-Kunisch2020}, we infer the existence and uniqueness of a solution $\bvarphi_k$ satisfying the estimate
\begin{equation}
\|\bvarphi_k\|_\bWoT \le \eta_0(\|\by_{\bu}\|_\bY)\|\by_{\bu} - \by_d\|_\bLqd \ \ \forall k,
\label{E3.5}
\end{equation}
where $\eta_0:[0,\infty) \longrightarrow [0,\infty)$ is a nondecreasing function vanishing at 0. Moreover, as in \cite{Casas-Kunisch2020}, we can prove that $\{\bvarphi_k\}_{k = 1}^\infty$ converges weakly in $\bWoT$ to the unique solution $\bvarphi_{\bu}$ of \eqref{E3.3}. Moreover, $\bvarphi_{\bu}$ also satisfies the estimate \eqref{E3.5}. It remains to prove the $\bV^{2,1}(0,T)$ regularity. To this end, we split the proof into two parts.\vspace{0.25cm}

{\em I - Estimate of $\|\bvarphi_k\|_{L^2(I;\bHt\cap\bV)}$.} First, we observe that
\[
A\bvarphi_k = \sum_{j = 1}^kg_{k,j}A\bpsi_j = \sum_{j = 1}^k\lambda_jg_{k,j}\bpsi_j.
\]
Multiplying equation \eqref{E3.4} by $\lambda_jg_{k,j}(t)$ and taking the sum from $j = 1$ to $k$ we infer
\begin{align*}
&-\frac{d}{dt}(\bvarphi_k(t),A\bvarphi_k(t))_\bLd + a(\bvarphi_k(t),A\bvarphi_k(t)) - b(\by_{\bu}(t),\bvarphi_k(t),A\bvarphi_k(t))\\
& - b(A\bvarphi_k(t),\bvarphi_k(t),\by_{\bu}(t)) = (\by_{\bu}(t) - \by_d(t),A\bvarphi_k(t))_\bLd.
\end{align*}
Using the identities established in \cite[Page 372]{Boyer-Fabrie2013}, the above identity yields
\begin{align}
&-\frac{1}{2}\frac{d}{dt}\|\bvarphi_k(t)\|^2_\bHo + \|A\bvarphi_k(t))\|^2_\bLd = (\by_{\bu}(t) - \by_d(t),A\bvarphi_k(t))_\bLd\notag\\
& +  b(\by_{\bu}(t),\bvarphi_k(t),A\bvarphi_k(t)) + b(A\bvarphi_k(t),\bvarphi_k(t),\by_{\bu}(t)).\label{E3.6}
\end{align}
Now, we estimate the right hand side of this identity. First we get
\begin{align*}
|b(\by_{\bu}(t),\bvarphi_k(t),&A\bvarphi_k(t))| \le \|\by_{\bu}(t)\|_\bLf\|\nabla\bvarphi_k(t)\|_\bLf\|A\bvarphi_k(t))\|_\bLd\\
&\le C_1\|\by_{\bu}(t)\|_\bLf\|\nabla\bvarphi_k(t)\|^{1/2}_\bLd\|\bvarphi_k(t)\|^{1/2}_\bHt\|A\bvarphi_k(t))\|_\bLd\\
&\le C_2\|\by_{\bu}(t)\|_\bLf\|\bvarphi_k(t)\|^{1/2}_\bHo\|A\bvarphi_k(t)\|^{3/2}_\bLd\\
&\le \frac{C_2^4}{4}\Big(\frac{9}{2}\Big)^3\|\by_{\bu}(t)\|^4_\bLf\|\bvarphi_k(t)\|^2_\bHo + \frac{1}{6}\|A\bvarphi_k(t)\|^2_\bLd.
\end{align*}
Above we have used a Gagliardo inequality (see \cite[Proposition III.2.35]{Boyer-Fabrie2013}), the $\bHt$ estimates for the solution of the Stokes problem $\|\by\|_\bHt \le C\|A\by\|_\bLd$ \cite[Theorem IV.5.8]{Boyer-Fabrie2013}, and Young's inequality.

The estimate for $b(A\bvarphi_k(t),\bvarphi_k(t),\by_{\bu}(t))$ is exactly the same. Therefore, inserting these estimates in \eqref{E3.6} and using again Young's inequality we get
\begin{align*}
&-\frac{1}{2}\frac{d}{dt}\|\bvarphi_k(t)\|^2_\bHo + \|A\bvarphi_k(t))\|^2_\bLd \le \|\by_{\bu}(t) - \by_d\|_\bLd\|A\bvarphi_k(t))\|_\bLd\notag\\
&+ C_3\|\by_{\bu}(t)\|^4_\bLf\|\bvarphi_k(t)\|^2_\bHo + \frac{1}{3}\|A\bvarphi_k(t)\|^2_\bLd\notag\\
& \le \frac{3}{2}\|\by_{\bu}(t) - \by_d\|^2_\bLd + C_3\|\by_{\bu}(t)\|^4_\bLf\|\bvarphi_k(t)\|^2_\bHo + \frac{1}{2}\|A\bvarphi_k(t)\|^2_\bLd,
\end{align*}
which implies
\begin{align}
&-\frac{d}{dt}\|\bvarphi_k(t)\|^2_\bHo + \|A\bvarphi_k(t))\|^2_\bLd\notag\\
&\le 3\|\by_{\bu}(t) - \by_d\|^2_\bLd + 2C_3\|\by_{\bu}(t)\|^4_\bLf\|\bvarphi_k(t)\|^2_\bHo. \label{E3.7}
\end{align}
Let us prove that $\by_{\bu} \in L^4(I;\bLf)$. Since $\by_{\bu} \in \mY$, we can write it in the form $\by_{\bu} = \by_1 + \by_2$ with $\by_1 \in \bWoT$ and $\by_2 \in \bWqpoT$. Using again a Gagliardo inequality we obtain
\[
\|\by_1(t)\|^4_\bLf \le C_4\|\by_1(t)\|^2_\bLd\|\by_1(t)\|^2_\bHo \le C_4\|\by_1\|^2_{L^\infty(I;\bLd)}\|\by_1(t)\|^2_\bHo.
\]
The embeddings $\bWoT \subset L^2(I;\bHo)$ and $\bWoT \subset L^\infty(I;\bLd)$ and the above inequality imply $\by_1 \in L^4(I;\bLf)$. On the other hand, since $\bWqpoT \subset L^q(I;\bWp) \subset L^4(I;\bLf)$,  recall \eqref{E1.2}, we infer that $\by_2 \in L^4(I;\bLf)$. Then, $\by_{\bu} \in L^4(I;\bLf)$ holds. Now, integrating \eqref{E3.7} in $[t,T]$ and using that $\bvarphi_k(T) = 0$ it follows
\[
\|\bvarphi_k(t)\|^2_\bHo \le 3\|\by_{\bu} - \by_d\|^2_\bLqd + 2C_3\int_t^T\|\by_{\bu}(s)\|^4_\bLf\|\bvarphi_k(s)\|^2_\bHo \, ds\ \forall t \in I.
\]
Applying Gronwall inequality we infer
\begin{equation}
\|\bvarphi_k\|_{L^\infty(I;\bHo)} \le \sqrt{3}\|\by_{\bu} - \by_d\|_\bLqd\exp{\Big(C_3\|\by_{\bu}\|^4_{L^4(I;\bLf)}\Big)}\ \ \forall k \ge 1.
\label{E3.8}
\end{equation}
Finally, integrating \eqref{E3.7} in $[0,T]$ and inserting \eqref{E3.8} we obtain
\begin{align*}
&\|A\bvarphi_k\|_\bLqd\\
&\le \sqrt{3}\|\by_\bu - \by_d\|_\bLqd\left\{1 + \sqrt{2C_3}\|\by_\bu\|^2_{L^4(I;\bLf)}\exp{\Big(C_3\|\by_\bu\|^4_{L^4(I;\bLf)}\Big)}\right\}.
\end{align*}
Once again, with $\|\by\|_\bHt \le C\|A\by\|_\bLd$ \cite[Theorem IV.5.8]{Boyer-Fabrie2013} we   deduce from the above estimate $\forall k \ge 1$
\begin{align}
&\|\bvarphi_k\|_{L^2(I;\bHt)}\notag\\
& \le C\|\by_\bu - \by_d\|_\bLqd\left\{1 + \hat{C}\|\by_\bu\|^2_{L^4(I;\bLf)}\exp{\Big(\hat{C}^2\|\by_\bu\|^4_{L^4(I;\bLf)}\Big)}\right\}.
\label{E3.9}
\end{align}

{\em II - Estimate of $\|\bvarphi'_k\|_{L^2(I;\bH)}$.} Multiplying equation \eqref{E3.4} by $-g'_{k,j}\bpsi_j$, adding the resulting identities from $j = 1$ to $k$, using the orthogonality of $\{\bpsi_j\}_{j = 1}^\infty$ in $\bH$ and integrating in $[0,T]$ we get
\begin{align*}
&\|\bvarphi'_k\|^2_\bLqd - \frac{1}{2}\int_0^T\frac{d}{dt}a(\bvarphi_k(t),\bvarphi_k(t))\, dt = -\int_0^T(\by_\bu(t) - \by_d(t),\bvarphi'_k(t))_\bLd\, dt\\
&-\int_0^Tb(\by_{\bu}(t),\bvarphi_k(t),\bvarphi'_k(t))\, dt - \int_0^Tb(\bvarphi'_k(t),\bvarphi_k(t),\by_{\bu}(t))\, dt.
\end{align*}
Now, taking into account that $\bvarphi_k(T) = 0$  it follows from the above identity
\begin{align}
&\|\bvarphi'_k\|^2_\bLqd \le \|\by_\bu - \by_d\|_\bLqd\|\bvarphi_k'\|_\bLqd\notag\\
&+ \Big|\int_0^Tb(\by_{\bu}(t),\bvarphi_k(t),\bvarphi'_k(t))\, dt + \int_0^Tb(\bvarphi'_k(t),\bvarphi_k(t),\by_{\bu}(t))\, dt\Big|.\label{E3.10}
\end{align}
With the Gagliardo and Young inequalities we obtain
\begin{align*}
|b(\by_\bu(t),\bvarphi_k(t),&\bvarphi'_k(t))| \le \|\by_\bu\|_\bLf\|\nabla\bvarphi_k\|_\bLf\|\bvarphi'_k\|_\bLd\\
&\le C_1\|\by_{\bu}(t)\|_\bLf\|\nabla\bvarphi_k(t)\|^{1/2}_\bLd\|\bvarphi_k(t)\|^{1/2}_\bHt\|\bvarphi'_k(t))\|_\bLd\\
&\le \frac{3C_1^2}{2}\|\by_{\bu}(t)\|^2_\bLf\|\nabla\bvarphi_k(t)\|_\bLd\|\bvarphi_k(t)\|_\bHt + \frac{1}{6}\|\bvarphi'_k(t))\|^2_\bLd.
\end{align*}
The same estimate is valid for $|b(\bvarphi'_k(t),\bvarphi_k(t),\by_{\bu}(t))|$. Inserting these estimates in \eqref{E3.10} and using Schwarz's inequality we find
\begin{align*}
&\left|\int_0^Tb(\by_\bu(t),\bvarphi_k(t),\bvarphi'_k(t))\, dt + \int_0^Tb(\bvarphi'_k(t),\bvarphi_k(t),\by_\bu(t))\, dt\right| \\
&\le 3C_1^2\|\bvarphi_k\|_{L^\infty(I;\bHo)}\|\by_\bu\|_{L^4(I;\bLf)}^2\|\bvarphi_k\|_{L^2(I;\bHt)} + \frac{1}{3}\|\bvarphi'_k(t))\|^2_\bLd.
\end{align*}
This estimate, \eqref{E3.10} and Young's inequality lead to
\begin{align*}
&\|\bvarphi'_k\|^2_\bLqd \le \frac{3}{2}\|\by_\bu - \by_d\|^2_\bLqd + \frac{1}{6}\|\bvarphi'_k\|^2_\bLqd\\
&+ 3C_1^2\|\bvarphi_k\|_{L^\infty(I;\bHo)}\|\by_\bu\|_{L^4(I;\bLf)}^2\|\bvarphi_k\|_{L^2(I;\bHt)} + \frac{1}{3}\|\bvarphi'_k(t))\|^2_\bLd,
\end{align*}
whence
\begin{align}
\|\bvarphi'_k\|_\bLqd &\le \sqrt{3}\|\by_\bu - \by_d\|_\bLqd\notag\\
&+ \sqrt{6}C_1\|\by_\bu\|_{L^4(I;\bLf)}\|\bvarphi_k\|^{1/2}_{L^\infty(I;\bHo)}\|\bvarphi_k\|^{1/2}_{L^2;\bHt)}. \label{E3.11}
\end{align}
From \eqref{E3.8} and \eqref{E3.9} the boundedness of $\{\bvarphi'_k\}_{k = 1}^\infty$ in $\bLqd$ follows. Therefore, $\bvarphi'_\bu \in \bLqd$ holds and with the first part of the proof we conclude that $\bvarphi_\bu \in \bV^{2,1}(0,T)$.
\end{proof}

Let us note that the estimates \eqref{E2.2}, \eqref{E3.5}, \eqref{E3.8}, \eqref{E3.9} and \eqref{E3.11} yields
\begin{equation}
\|\bvarphi_\bu\|_{\bV^{2,1}(0,T)} \le \eta\Big(\|\bef_0\|_{L^q(I;\bWpc')} + \|\bu\|_\lmoq + \|\by_0\|_{\bY_0}\Big)\|\by_\bu - \by_d\|_\bLqd.
\label{E3.12}
\end{equation}
for some non-decreasing monotone function $\eta:[0,\infty) \longrightarrow [0,\infty)$.

Next, we prove the first order necessary optimality conditions. Since \Pb is not a convex problem, it is convenient to discuss necessary optimality conditions in the context of local solutions. Here, we say that $\bar\bu$ is a local solution of \Pb if there exists a neighborhood $\mA$ of $\bar\bu$ in $\lmo$ such that $J(\bar\bu) \le J(\bu)$ for all $\bu \in \mA$. If the inequality is strict for all $\bu \in \mA$ with $\bu \neq \bar\bu$, we say that $\bar\bu$ is a strict local solution. We will also consider local solutions in the $L^q(I;\bWmop)$ topology. Let us observe that the continuous embedding $\lmo \subset L^q(I;\bWmop)$ implies that any local solution in the $L^q(I;\bWmop)$ topology is also a local solution in the $\lmo$ topology.

\begin{theorem}
Let us assume that $\bar\bu$ is a local solution of \Pb with associated state $\bar\by$. Then, there exists a unique element $\bar\bvarphi \in \bV^{2,1}(0,T)$ satisfying
\begin{align}
&\left\{\begin{array}{l}\displaystyle-\frac{\partial\bar\bvarphi}{\partial t} - \nu\Delta\bar\bvarphi - (\bar\by \cdot \bna)\bar\bvarphi - (\bna\bar\bvarphi)^T\bar\by + \nabla \bar\pi = \bar\by - \by_d\ \text{ in } Q,\\[1.2ex]\div\bar\bvarphi = 0 \ \text{ in } Q, \ \bar\bvarphi = 0 \ \text{ on } \Sigma,\ \bar\bvarphi(T) = 0 \ \text{ in } \Omega,\end{array}\right.\label{E3.13}\\
&\left\{\begin{array}{l}\text{if } \bar\varphi_i(t) \not\equiv 0, \text{ then } \|\bar u_i(t)\|_{M(\omega)} = \gamma \text{ and }\\\textrm{Supp}(\bar u_i^+(t)) \subset \{x \in \omega : \bar\varphi_i(x,t) = -\|\bar\varphi_i(t)\|_{C_0(\omega)}\},\\
\textrm{Supp}(\bar u_i^-(t)) \subset \{x \in \omega : \bar\varphi_i(x,t) = +\|\bar\varphi_i(t)\|_{C_0(\omega)}\},\end{array}\right.\label{E3.14}
\end{align}
for $i = 1, 2$ and almost every point $t \in I$, where $\bar u_i(t) = \bar u^+_i(t) - \bar u^-_i(t)$ is the Jordan decomposition of the measure $\bar u_i(t)$.
\label{T3.3}
\end{theorem}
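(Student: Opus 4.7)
The plan is to argue in three steps. First, the existence and uniqueness of $\bar\bvarphi \in \bV^{2,1}(0,T)$ solving \eqref{E3.13} are already furnished by Theorem \ref{T3.2} applied at $\bu = \bar\bu$; it therefore remains to establish the sparsity relations \eqref{E3.14}. Since $\uad$ is convex and $\bar\bu$ is a local minimizer in the $\lmo$-topology, for every $\bv \in \uad$ the convex combination $\bar\bu + \lambda(\bv - \bar\bu)$ lies in $\uad$ and, for $\lambda \in (0,1)$ sufficiently small, inside the neighborhood of $\bar\bu$ in which the minimality holds. Dividing $J(\bar\bu + \lambda(\bv - \bar\bu)) - J(\bar\bu) \ge 0$ by $\lambda$, sending $\lambda\to 0^+$, and inserting \eqref{E3.1} evaluated at $\bar\bu$ yields the integrated variational inequality
$$
\int_0^T \langle \bv(t) - \bar\bu(t), \bar\bvarphi(t)\rangle_{\bM,\bCo}\, dt \ge 0 \quad \forall\, \bv \in \uad.
$$

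The key step is then to upgrade this to the pointwise identity
$$
\langle \bar\bu(t), \bar\bvarphi(t)\rangle_{\bM,\bCo} = -\gamma \|\bar\bvarphi(t)\|_\bCo \quad \text{for a.a. } t \in I. \qquad (\sharp)
$$
Since $\bar\bvarphi \in L^2(I;\bHt)$ and $\bHt \hookrightarrow \mathbf{C}(\bar\Omega)$ in two dimensions (combined with $\bar\bvarphi(t) \in \bV$, which forces the trace on $\Gamma$ to vanish), $\bar\bvarphi$ is strongly measurable as a map $I \to \bCo$, and each component $(t,x) \mapsto \bar\varphi_i(t,x)$ is Carath\'eodory. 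Applying a standard measurable-selection theorem of Kuratowski--Ryll-Nardzewski type to the closed-valued maps $t \mapsto \{x \in \bar\omega : \bar\varphi_i(t,x) = \pm\|\bar\varphi_i(t)\|_{C_0(\omega)}\}$ yields measurable selections $x_i^\pm(t) \in \bar\omega$, from which a measurable $\bv^* \in \uad$ saturating $\langle \bv^*(t), \bar\bvarphi(t)\rangle_{\bM,\bCo} = -\gamma \|\bar\bvarphi(t)\|_\bCo$ a.e.\ can be assembled as a signed combination of Dirac masses $\pm\gamma\delta_{x_i^\pm(t)}$. Inserting this $\bv^*$ into the variational inequality gives
$$
\int_0^T \langle \bar\bu(t), \bar\bvarphi(t)\rangle_{\bM,\bCo}\, dt \le -\gamma \int_0^T \|\bar\bvarphi(t)\|_\bCo\, dt,
$$
whereas the trivial pointwise bound $\langle \bar\bu(t), \bar\bvarphi(t)\rangle_{\bM,\bCo} \ge -\gamma\|\bar\bvarphi(t)\|_\bCo$ integrated in time yields the reverse inequality, so $(\sharp)$ follows.

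Finally, thanks to the structure $\|\bphi\|_\bCo = \|\phi_1\|_{C_0(\omega)} + \|\phi_2\|_{C_0(\omega)}$ and $\|\bu\|_\bM = \max_i \|u_i\|_{M(\omega)} \le \gamma$, the componentwise inequality $\langle \bar u_i(t), \bar\varphi_i(t)\rangle \ge -\gamma\|\bar\varphi_i(t)\|_{C_0(\omega)}$ summed over $i=1,2$ and compared with $(\sharp)$ forces $\langle \bar u_i(t), \bar\varphi_i(t)\rangle = -\gamma \|\bar\varphi_i(t)\|_{C_0(\omega)}$ for $i=1,2$ and a.a.\ $t$. Writing the Jordan decomposition $\bar u_i(t) = \bar u_i^+(t) - \bar u_i^-(t)$ and chaining
$$
\int_\omega \bar\varphi_i(t)\,d\bar u_i(t) \ge -\|\bar\varphi_i(t)\|_{C_0(\omega)}\bigl(\bar u_i^+(t)(\omega) + \bar u_i^-(t)(\omega)\bigr) \ge -\gamma\|\bar\varphi_i(t)\|_{C_0(\omega)},
$$
equality throughout requires, whenever $\bar\varphi_i(t)\not\equiv 0$, both the saturation $\|\bar u_i(t)\|_{M(\omega)} = \gamma$ and the support localizations stated in \eqref{E3.14}. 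The main obstacle is the intermediate upgrade from the integrated inequality to $(\sharp)$: the non-separability of $\bM$ rules out a straightforward density argument, and one must instead construct a $t$-measurable pointwise minimizer via the Carath\'eodory regularity of $\bar\bvarphi$ (guaranteed by the $\bHt$-estimate implicit in Theorem \ref{T3.2}) and a measurable-selection argument.
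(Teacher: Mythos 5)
Your argument is correct and follows essentially the same route as the paper: the variational inequality from convexity of $\uad$, a measurable selection producing a Dirac-mass control that saturates the pointwise bound, the upgrade from the integrated identity to the a.e.\ pointwise one via the one-sided inequality, and the Jordan-decomposition/alignment argument for the support inclusions (which the paper delegates to \cite[Lemma 3.4]{CCK2013}). The only detail to repair is your selection step: the level sets $\{x\in\omega : \bar\varphi_i(x,t)=\pm\|\bar\varphi_i(t)\|_{C_0(\omega)}\}$ may individually be empty, so, as in the paper, you should instead select a measurable $t\mapsto x_t$ maximizing $|\bar\varphi_i(\cdot,t)|$ and set $v_i^*(t)=-\gamma\,\mathrm{sign}(\bar\varphi_i(x_t,t))\,\delta_{x_t}$.
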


\begin{proof}
From Theorem \ref{T3.2} we know the existence and uniqueness of $\bar\bvarphi \in \bV^{2,1}(0,T)$ satisfying \eqref{E3.13}. From the expression for $J'$ given in \eqref{E3.1} and using the convexity of $\uad$ we have
\[
0 \le J'(\bar\bu)(\bu - \bar\bu) = \int_0^T\langle\bu(t) - \bar\bu(t),\bar\bvarphi(t)\rangle_{\bM,\bCo}\, dt\quad \forall \bu \in \uad.
\]
This is equivalent to
\begin{equation}
\int_0^T\langle u(t),\bar\varphi_i(t)\rangle_{\mo,C_0(\omega)}\, dt \le  - \int_0^T\langle \bar u_i(t),\bar\varphi_i(t)\rangle_{\mo,C_0(\omega)}\, dt, \ \ i = 1, 2.
\label{E3.15}
\end{equation}
for every $u$ satisfying $\|u\|_{L^\infty(I;\mo)} \le \gamma$.

Since $\bar\varphi_i:\bar\Omega \times I\to \mathbb{R}$ is a Caratheodory function (continuous with respect to the first variable and measurable with respect to the second), there exists a measurable selection $t \in I \mapsto x_t \in \bar\Omega$ such that $|\bar\varphi_i(x_t,t)| = \|\bar\varphi_i(t)\|_{C_0(\omega)}$; see \cite[Chapter 8, Theorem 1.2]{Ekeland-Temam76}. Now, we define the element $u \in L^\infty(I;\mo)$ by $u(t) = \gamma\text{sign}(\bar\varphi_i(x_t,t))\delta_{x_t}$. We have to check that $u:I \to \mo$ is weakly measurable. To this end the only delicate point is the weak measurability of $t \in I \mapsto \delta_{x_t} \in \mo$. This follows from the measurability of the mapping $t \mapsto x_t$ and the continuity of $x \in \bar\Omega \mapsto \delta_x \in \mo$ when $\mo$ is endowed with the weak$^*$ topology. By definition of $u$, the fact that $\bar\bu \in \uad$, and \eqref{E3.15} we get
\begin{align*}
&\gamma\int_0^T\|\bar\varphi_i(t)\|_{C_0(\omega)}\, dt = \int_0^T\langle u(t),\bar\varphi_i(t)\rangle_{\mo,C_0(\omega)}\, dt\\
&\le -\int_0^T\langle\bar u_i(t),\bar\varphi_i(t)\rangle_{\mo,C_0(\omega)}\, dt \le \gamma\int_0^T\|\bar\varphi_i(t)\|_{C_0(\omega)}\, dt.
\end{align*}
This implies
\[
-\int_0^T\langle\bar u_i(t),\bar\varphi_i(t)\rangle_{\mo,C_0(\omega)}\, dt = \gamma\int_0^T\|\bar\varphi_i(t)\|_{C_0(\omega)}\, dt
\]
and consequently
\begin{equation}
\int_0^T\langle u(t) + \bar u_i(t),\bar\varphi_i(t)\rangle_{\mo,C_0(\omega)}\, dt = 0.
\label{E3.16}
\end{equation}
Moreover, we have for almost every $t \in I$
\[
\langle u(t),\bar\varphi_i(t)\rangle_{\mo,C_0(\omega)} = \gamma\|\bar\varphi_i(t)\|_{C_0(\omega)} \ge -\langle \bar u_i(t),\bar\varphi_i(t)\rangle_{\mo,C_0(\omega)}.
\]
Whence we obtain $\langle u(t) + \bar u_i(t),\bar\varphi_i(t)\rangle_{\mo,C_0(\omega)} \ge 0$. This inequality along with \eqref{E3.16} yields
\[
-\langle\bar u_i(t),\bar\varphi_i(t)\rangle_{\mo,C_0(\omega)} = \langle u(t),\bar\varphi_i(t)\rangle_{\mo,C_0(\omega)} = \gamma\|\bar\varphi_i(t)\|_{C_0(\omega)}.
\]
This identity yields $\|\bar u_i(t)\|_\mo = \gamma$ if $\bar\varphi_i(t) \not\equiv 0$ and $-\langle\bar u_i(t),\bar\varphi_i(t)\rangle_{\mo,C_0(\omega)} = \|\bar u_i(t)\|_\mo\|\bar\varphi_i(t)\|_{C_0(\omega)}$ holds. Then, we can apply \cite[Lemma 3.4]{CCK2013} to get the inclusions \eqref{E3.14}.
\end{proof}

Next we define the Lagrangian function associated with the control problem \Pb. To this end, first we consider the functional $j:\mo \longrightarrow [0,\infty)$ given by $j(u) = \|u\|_\mo$. This is a convex and Lipschitz functional having directional derivatives $j'(u;v)$ for all $u, v \in \mo$. To give an expression for the derivative $j'(u;v)$ we consider the Lebesgue decomposition of $v$ with respect to $|u|$: $v = v_a + v_s$ with $dv_a = g_v d|u|$, where $v_a$ and $v_s$ are the absolutely continuous and singular parts of $v$ with respect to $|u|$, and $g_v \in L^1(|u|)$ is the Radon-Nikodym derivative of $v$ with respect to $|u|$. We can also write $du = g_u d|u|$ where $g_u$ is a measurable function such that $|g_u(x)| = 1$ for all $x \in \omega$. Actually, $g_u$ is the Radon-Nikodym derivative of $u$ with respect to $|u|$. The reader is referred, for instance, to \cite[Chapter 6]{Rudin70} for these issues. Now we have the following result taken from \cite[Proposition 3.3]{Casas-Kunisch2014}

\begin{proposition}
Let $u, v \in \mo$, then
\begin{equation}
j'(u;v) = \int_\omega g_v\, du + \|v_s\|_\mo.
\label{E3.17}
\end{equation}
\label{P3.1}
\end{proposition}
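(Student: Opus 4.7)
The plan is to compute the one-sided directional derivative $j'(u;v) = \lim_{t\downarrow 0}(\|u+tv\|_\mo - \|u\|_\mo)/t$ directly from the definition, exploiting the Lebesgue decomposition $v = v_a + v_s$ of $v$ with respect to $|u|$ to split $u+tv$ into an absolutely continuous part and a singular part.

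First I would observe that $u + tv_a$ is absolutely continuous with respect to $|u|$, while $tv_s$ is singular with respect to $|u|$, so these two measures are mutually singular. By the additivity of the total variation norm on mutually singular measures,
\[
\|u+tv\|_\mo = \|u+tv_a\|_\mo + t\,\|v_s\|_\mo \qquad \text{for } t>0.
\]
Since $du = g_u\, d|u|$ and $dv_a = g_v\, d|u|$, we have $d(u+tv_a) = (g_u + tg_v)\,d|u|$, hence
\[
\|u+tv_a\|_\mo = \int_\omega |g_u + tg_v|\, d|u|.
\]
Subtracting $\|u\|_\mo = \int_\omega |g_u|\, d|u|$ and dividing by $t$ yields
\[
\frac{j(u+tv)-j(u)}{t} = \int_\omega \frac{|g_u + tg_v|-|g_u|}{t}\, d|u| + \|v_s\|_\mo.
\]

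Next I would pass to the limit $t\downarrow 0$ under the integral sign. Since $|g_u|=1$ for $|u|$-a.e. $x\in\omega$, so that $g_u(x)\in\{-1,+1\}$, the integrand converges pointwise to $g_u g_v$, and by the reverse triangle inequality it is dominated by $|g_v|\in L^1(|u|)$. The dominated convergence theorem then gives
\[
j'(u;v) = \int_\omega g_u g_v\, d|u| + \|v_s\|_\mo = \int_\omega g_v\, du + \|v_s\|_\mo,
\]
where the last equality uses $du = g_u\, d|u|$ once more.

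The main delicate point is justifying the decomposition $\|u+tv\|_\mo = \|u+tv_a\|_\mo + t\|v_s\|_\mo$, which rests on the additivity of total variation on mutually singular measures; everything else is a routine pointwise expansion of $|g_u + tg_v|$ combined with dominated convergence. No further regularity of $u$ or $v$ is used beyond the integrability $g_v\in L^1(|u|)$ inherent in the Radon--Nikodym representation of $v_a$.
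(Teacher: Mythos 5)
Your proof is correct, and it is essentially the standard argument: the paper does not prove Proposition \ref{P3.1} itself but cites \cite[Proposition 3.3]{Casas-Kunisch2014}, where the same strategy is used — split off the singular part via additivity of the total variation on mutually singular measures, write the absolutely continuous part as $\int_\omega|g_u+tg_v|\,d|u|$, and pass to the limit using $|g_u|=1$ $|u|$-a.e.\ together with the domination $\bigl||g_u+tg_v|-|g_u|\bigr|/t\le|g_v|\in L^1(|u|)$. All the steps you flag as delicate (mutual singularity of $u+tv_a$ and $tv_s$, and the dominated convergence) are correctly justified, so nothing is missing.
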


Given $\bu, \bv \in \lmo$, we denote by $g_{v_i}(t)$ the Radon-Nikodym derivative of $v_i(t)$ with respect to $|u_i(t)|$ and $v_{is}(t)$ the singular part of $v_i(t)$ with respect to $|u_i(t)|$. Then, $g_{v_i}:\omega \times I \longrightarrow \mathbb{R}$ is a measurable function.

Associated with the control problem \Pb we define the Lagrangian function
\[
\mL:\lmo \times L^1(I)^2 \longrightarrow \mathbb{R},\ \ \mL(\bu,\bpsi) = J(u) + \sum_{i = 1}^2\int_0^T\psi_i(t)j(u_i(t))\, dt.
\]
According to \eqref{E3.1} and \eqref{E3.17} the directional derivative of $\mL$ with respect to the first variable is given by
\begin{align}
\frac{\partial\mL}{\partial\bu}(\bu,\bpsi)\bv &= \int_0^T\langle\bv(t),\bvarphi_u(t)\rangle_{\bM,\bCo}\notag\\
&+ \sum_{i = 1}^2\int_0^T\psi_i(t)\left\{\int_\omega g_{v_i}(t)\, du_i(t) + \|v_{is}(t)\|_\mo\right\}\, dt\notag\\
& = \sum_{i = 1}^2\int_0^T\left\{\int_\omega\varphi_{ui}(t)g_{v_i}(t)\, d|u_i|(t) + \psi_i(t)\int_\omega g_{v_i}(t)\, du_i(t)\right\}\, dt\notag\\
&+ \sum_{i = 1}^2\int_0^T\left\{\langle v_{is}(t),\varphi_{ui}(t)\rangle_{\mo,C_0(\omega)} + \psi_i(t)\|v_{is}(t)\|_\mo\right\}\, dt.\label{E3.18}
\end{align}

Denote by $\bar\bu \in \uad$ a control with associated adjoint state $\bar\bvarphi \in \bV^{2,1}(0,T)$ satisfying \eqref{E3.14}. We define the function $\bar\bphi$ as follows
\[
\bar\phi_i(t) = \left\{\begin{array}{cl}1 & \text{if } \bar\varphi_i(t) \equiv 0,\\-\frac{\bar\varphi_i(t)}{\|\bar\varphi_i(t)\|_{C_0(\omega)}}& \text{if } \bar\varphi_i(t) \not\equiv 0,\end{array}\right. \ \text{ for } i = 1, 2.
\]
Then, we infer with \eqref{E3.14} that
\[
\bar\phi_i(x,t) = \left\{\begin{array}{cl} +1 & \text{if } x \in \textrm{Supp}(\bar u_i^+(t)),\\-1 & \text{if } x \in \textrm{Supp}(\bar u_i^-(t)),\end{array}\right. \ \text{ for } i = 1, 2,
\]
and, consequently, $d\bar u_i(t) = \bar\phi_i(t) d|\bar u_i(t)|$ for $i = 1, 2$. Using these identities and setting $\bu = \bar\bu$ and $\bar\psi_i(t) = \|\bar\varphi_i(t)\|_{C_0(\omega)}$, $i = 1, 2$, in \eqref{E3.18} we obtain the directional derivatives
\begin{align}
\hspace{-0.4cm}\frac{\partial\mL}{\partial\bu}(\bar\bu,\bar\bpsi)\bv &= \sum_{i = 1}^2\int_0^T\left\{\int_\omega\bar\varphi_i(t)g_{v_i}(t)\, d|\bar u_i|(t) + \|\bar\varphi_i(t)\|_{C_0(\omega)}\int_\omega g_{v_i}(t)\, d\bar u_i(t)\right\}\, dt\notag\\
&+ \sum_{i = 1}^2\int_0^T\left\{\langle v_{is}(t),\bar\varphi_i(t)\rangle_{\mo,C_0(\omega)} + \|\bar\varphi_i(t)\|_{C_0(\omega)}\|v_{is}(t)\|_\mo\right\}\, dt\notag\\
&= \sum_{i = 1}^2\int_0^T\left\{\int_\omega[\bar\varphi_i(t) + \|\bar\varphi_i(t)\|_{C_0(\omega)}\bar\phi_i(t)]g_{v_i}(t)\, d|\bar u_i|(t)\right.\, dt\notag\\
&+ \sum_{i = 1}^2\int_0^T\left\{\langle v_{is}(t),\bar\varphi_i(t)\rangle_{\mo,C_0(\omega)} + \|\bar\varphi_i(t)\|_{C_0(\omega)}\|v_{is}(t)\|_\mo\right\}\, dt\notag\\
&= \sum_{i = 1}^2\int_0^T\left\{\langle v_{is}(t),\bar\varphi_i(t)\rangle_{\mo,C_0(\omega)} + \|\bar\varphi_i(t)\|_{C_0(\omega)}\|v_{is}(t)\|_\mo\right\}\, dt.\label{E3.19}
\end{align}

From the above expression we deduce that $\frac{\partial\mL}{\partial\bu}(\bar\bu,\bar\bpsi)$ can be extended to a linear continuous form $\frac{\partial\mL}{\partial\bu}(\bar\bu,\bar\bpsi):\lmoq \longrightarrow \mathbb{R}$. Indeed, taking into account that $\bV^{1,2}(0,T) \subset L^2(I;\bCo) \subset L^{q'}(I;\bCo)$ we have
\[
|\frac{\partial\mL}{\partial\bu}(\bar\bu,\bar\bpsi)\bv| \le \|\bar\bvarphi\|_{L^{q'}(I;\bCo)}\|\bv\|_\lmoq\quad \forall \bv \in \lmoq.
\]

From the inequality
\[
|\langle v_{is}(t),\bar\varphi_i(t)\rangle_{\mo,C_0(\omega)}| \le \|\bar\varphi_i(t)\|_{C_0(\omega)}\|v_{is}(t)\|_\mo\ \text{ for } i= 1, 2,
\]
\cite[Lemma 3.4]{CCK2013}, the fact that $v_{is}(t)$ is singular with respect to $|\bar u_i(t)|$, and recalling that $\bar\bpsi(t) = \|\bar\bvarphi(t)\|_\bCo$ we deduce
\begin{align}
&\frac{\partial\mL}{\partial\bu}(\bar\bu,\bar\bpsi)\bv \ge 0\ \ \forall \bv \in \lmoq,\label{E3.20}\\
&\frac{\partial\mL}{\partial\bu}(\bar\bu,\bar\bpsi)\bv = 0 \ \text{ if and only if for } i = 1 \text{ and } 2, \text{ in case } \bar\varphi_i(t) \not\equiv 0\notag\\
&\hspace{0.25cm}\left\{\begin{array}{l}\textrm{Supp}(\bar v_{is}^+(t)) \subset \{x \in \omega\setminus\textrm{Supp}(|\bar u_{i}(t)|) : \bar\varphi_i(x,t) = -\|\bar\varphi_i(t)\|_{C_0(\omega)}\},\\
\textrm{Supp}(\bar v_{is}^-(t)) \subset \{x \in \omega\setminus\textrm{Supp}(|\bar u_{i}(t)|) : \bar\varphi_i(x,t) = +\|\bar\varphi_i(t)\|_{C_0(\omega)}\}.\end{array}\right. \label{E3.21}
\end{align}

\section{Second order optimality conditions}
\label{S4}
\setcounter{equation}{0}

In this section we study the second order necessary a sufficient optimality conditions for local optimality. Associated with $\bar\bu$ of \Pb we introduce the cone of critical directions
\begin{equation}
C_{\bar\bu} = \{\bv \in \lmoq : v_i \text{ satisfies } \eqref{E4.2}, i = 1,2, \text{ and }\frac{\partial\mL}{\partial\bu}(\bar\bu,\bar\bpsi)\bv = 0\}.
\label{E4.1}
\end{equation}

\begin{equation}
\text{For a.a. } t \in I: \text{ if } \|\bar u_i(t)\|_\mo = \gamma\ \text{then}\ \left\{\begin{array}{l}j'(\bar u_i(t);v_i(t)) \le 0,\\j'(\bar u_i(t);v_i(t)) = 0 \text{ if } \bar\varphi_i(t) \not\equiv 0.\end{array}\right.
\label{E4.2}
\end{equation}

Now, we formulate the second order necessary optimality condition.

\begin{theorem}
Let $\bar\bu$ be a local minimum of \Pb. Then, $J''(\bar\bu)\bv^2 \ge 0$ $\forall \bv \in C_{\bar\bu}$ holds.
\label{T4.1}
\end{theorem}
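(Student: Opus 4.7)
Proof plan. The strategy is the classical perturbation approach: for $\bv\in C_{\bar\bu}$ construct a family $\bu_\rho\in\uad$ of the form $\bu_\rho = \bar\bu + \rho\bv + \bw_\rho$ with a correction $\bw_\rho$ of size $o(\rho^2)$ in $\lmoq$, use local optimality $J(\bu_\rho)\ge J(\bar\bu)$, and Taylor-expand the $C^\infty$ functional $J$ (Theorem \ref{T3.2}) to extract $J''(\bar\bu)\bv^2 \ge 0$. Two preparatory items make the argument work: (i) $J'(\bar\bu)\bv = 0$ for every $\bv\in C_{\bar\bu}$, and (ii) the $o(\rho^2)$ bound on $\bw_\rho$.

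For (i), split $v_i(t) = v_{i,a}(t) + v_{i,s}(t)$ via the Lebesgue decomposition with respect to $|\bar u_i(t)|$. The critical-cone condition $\frac{\partial\mathcal{L}}{\partial\bu}(\bar\bu,\bar\bpsi)\bv=0$ together with the pointwise nonnegativity of the integrand in \eqref{E3.20} forces \eqref{E3.21}, from which $\langle v_{i,s}(t),\bar\varphi_i(t)\rangle_{M(\omega),C_0(\omega)} = -\|\bar\varphi_i(t)\|_{C_0(\omega)}\|v_{i,s}(t)\|_{M(\omega)}$ on $\{\bar\varphi_i(t)\not\equiv 0\}$. Using the alignment $\bar\varphi_i = -\|\bar\varphi_i\|_{C_0(\omega)}\bar\phi_i$ on $\text{supp}(|\bar u_i|)$ (a consequence of \eqref{E3.14}) and $j'(\bar u_i(t);v_i(t))=0$ on the same set (from \eqref{E4.2}), one checks that the absolutely continuous contribution $\int\bar\varphi_i g_{v_i}\,d|\bar u_i|$ equals $\|\bar\varphi_i\|_{C_0(\omega)}\|v_{i,s}\|_{M(\omega)}$, exactly cancelling the singular contribution. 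On $\{\|\bar u_i(t)\|_{M(\omega)}<\gamma\}$, \eqref{E3.14} yields $\bar\varphi_i(t)\equiv 0$, so the integrand of \eqref{E3.1} vanishes trivially there. Hence $J'(\bar\bu)\bv=0$.

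For (ii), define pointwise $u_{\rho,i}(t) := \lambda_{\rho,i}(t)(\bar u_i(t)+\rho v_i(t))$ with $\lambda_{\rho,i}(t):=\min\{1,\gamma/\|\bar u_i(t)+\rho v_i(t)\|_{M(\omega)}\}$, so $\bu_\rho\in\uad$, and $w_{\rho,i}(t)$ vanishes outside $\{\|\bar u_i(t)\|_{M(\omega)}=\gamma\}$ for small $\rho$. On that set, using $d\bar u_i = \bar\phi_i\,d|\bar u_i|$ with $|\bar\phi_i|=1$ and $dv_i = g_{v_i}\,d|\bar u_i|+dv_{i,s}$, Taylor-expand the integrand in $\int\sqrt{1+2\rho\bar\phi_i g_{v_i}+\rho^2 g_{v_i}^2}\,d|\bar u_i|$ at $\rho=0$. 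The quadratic coefficient vanishes because $\bar\phi_i^2 g_{v_i}^2 = g_{v_i}^2$, which yields
\[
\|\bar u_i(t)+\rho v_i(t)\|_{M(\omega)} \;=\; \gamma + \rho\,j'(\bar u_i(t);v_i(t)) + O(\rho^3),
\]
uniformly in $t$ provided $g_{v_i}$ is uniformly bounded. By \eqref{E4.2}, $j'(\bar u_i(t);v_i(t))\le 0$ on $\{\|\bar u_i(t)\|_{M(\omega)}=\gamma\}$, with equality where $\bar\varphi_i(t)\not\equiv 0$; therefore $\|w_{\rho,i}(t)\|_{M(\omega)}=O(\rho^3)$ pointwise, and $\|\bw_\rho\|_\lmoq=o(\rho^2)$.

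Combining: $\bu_\rho\to\bar\bu$ in $\lmo$, local optimality, $J'(\bar\bu)\bv=0$, the continuity of $J'(\bar\bu)\colon\lmoq\to\mathbb{R}$, and $\|\bw_\rho\|_\lmoq=o(\rho^2)$ together with the second-order Taylor expansion of $J$ give $0\le J(\bu_\rho)-J(\bar\bu) = \tfrac{\rho^2}{2}J''(\bar\bu)\bv^2 + o(\rho^2)$, whence the desired inequality. The main obstacle is that the cubic-remainder expansion genuinely requires boundedness of the densities $g_{v_i}$, which need not hold for arbitrary $\bv\in C_{\bar\bu}$ (they are a priori only in $L^1(|\bar u_i|)$). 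I would therefore first approximate $\bv$ by $\bv^n\in C_{\bar\bu}$ with uniformly bounded densities and $\bv^n\to\bv$ in $\lmoq$, run the above argument for each $\bv^n$, and pass to the limit using continuity of $J''(\bar\bu)$ on $\lmoq$. Producing such an approximation inside $C_{\bar\bu}$, i.e.\ preserving both the sign condition \eqref{E4.2} and the support restrictions \eqref{E3.21} on the singular parts while truncating the density, is the most delicate step of the proof.
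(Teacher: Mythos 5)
Your overall strategy (feasible perturbation of $\bar\bu$ in a critical direction, Taylor expansion, then an approximation argument to reach directions with unbounded densities) is the same as the paper's, and your item (i), $J'(\bar\bu)\bv=0$ on $C_{\bar\bu}$, is correct. The genuine gap is in item (ii). The claim that $w_{\rho,i}(t)$ vanishes off the active set $\{\|\bar u_i(t)\|_{M(\omega)}=\gamma\}$ ``for small $\rho$'' is only true pointwise in $t$: the threshold is $\rho<(\gamma-\|\bar u_i(t)\|_{M(\omega)})/\|v_i(t)\|_{M(\omega)}$, which is not bounded away from zero over the almost-active band. For a fixed small $\rho$, on the set $E_\rho=\{t:\gamma-\rho\|v_i(t)\|_{M(\omega)}<\|\bar u_i(t)\|_{M(\omega)}<\gamma\}$ the retraction can produce $\|w_{\rho,i}(t)\|_{M(\omega)}$ of order $\rho$, and all you can conclude is $\|\bw_\rho\|_\lmoq\le C\rho\,|E_\rho|^{1/q}=o(\rho)$, not $o(\rho^2)$. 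Since your expansion uses $J'(\bar\bu)\bw_\rho=O(\|\bw_\rho\|_\lmoq)$ to absorb the first-order term, $o(\rho)$ is not enough and the conclusion $0\le\frac{\rho^2}{2}J''(\bar\bu)\bv^2+o(\rho^2)$ does not follow as written. (The step could be rescued by observing that \eqref{E3.14} forces $\bar\varphi_i(t)\equiv 0$ wherever $\|\bar u_i(t)\|_{M(\omega)}<\gamma$, so $J'(\bar\bu)\bw_\rho$ does not see the contribution from $E_\rho$; but you do not invoke this, and it would still have to be combined with a bound on the cross term $J''(\bar\bu)(\bv,\bw_\rho)$.) The paper sidesteps the non-uniformity entirely by setting the perturbation equal to zero on the band $\gamma-\frac1k<\|\bar u_i(t)\|_{M(\omega)}<\gamma$, and by exploiting that for bounded densities the total-variation norm is \emph{exactly} affine in $\rho$ (no remainder at all), so feasibility $\bar\bu+\rho\bv_k\in\uad$ holds exactly for $\rho<\rho_k$.

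The second unresolved point is the one you yourself flag: the approximation of a general $\bv\in C_{\bar\bu}$ by directions with bounded densities \emph{inside} the critical cone. This is not a routine detail that can be deferred; it is where the main construction of the paper lives. The paper truncates $g_{v_i}$ to $[-k,k]$ and then adds the correction $\frac{a(t)-a_k(t)}{\gamma}g_{\bar u_i}(t)$ to the density, which leaves $j'(\bar u_i(t);\cdot)$ (and hence membership in $C_{\bar\bu}$ and the identity $J'(\bar\bu)\bv_k=0$) exactly unchanged, while the correction is controlled by $\frac{2}{\gamma}\|\bv\|_\lmo$. Note also that truncating the density does not affect the singular part, so preserving \eqref{E3.21} is automatic; the real difficulty is preserving the sign and equality conditions in \eqref{E4.2}, which is what the correction term achieves. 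Finally, your argument implicitly assumes $\|v_i(t)\|_{M(\omega)}$ is essentially bounded in $t$; for $\bv\in C_{\bar\bu}\setminus\lmo$ a further truncation in time is needed, as in the last step of the paper's proof.
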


\begin{proof}
Let us take $\bv \in C_{\bar\bu} \cap \lmo$. We set
\[
v_i(t) = g_{v_i}(t)d|\bar u_i(t)| + v_{is}(t)\ \text{ and }\ d\bar u_i(t) = g_{\bar u_i}(t) d|\bar u_i(t)|, i = 1, 2,
\]
where $g_{v_i}(t)$ and $g_{\bar u_i}(t)$ are the corresponding Randon-Nikodym derivatives. We define the sets
\begin{align*}
&I^0_{\gamma,i} = \{t \in I : \|\bar u_i(t)\|_\mo = \gamma\text{ and } \bar\varphi_i(t) \equiv 0\},\\
& I^+_{\gamma,i} = \{t \in I : \bar\varphi_i(t) \not\equiv 0\},\ I_{\gamma,i} = I^0_{\gamma,i} \cup I^+_{\gamma,i}, \ \ i = 1, 2.
\end{align*}
Note further that $I = I_{\gamma,i} \cup \{t \in I : \|\bar u(t)\|_\mo < \gamma\}$. From \eqref{E3.14} it follows that $\|\bar u_i(t)\|_\mo = \gamma$ for every $t \in I_{\gamma,i}$. Proposition \ref{P3.1} and $\bv \in C_{\bar\bu}$ yield
\begin{equation}
j'(\bar u_i(t),v_i(t)) = \int_\omega g_{v_i}(t)\,d\bar u_i(t) + \|v_{is}(t)\|_\mo\left\{\begin{array}{l} = 0 \text{ if } t \in I^+_{\gamma,i},\\ \le 0 \text{ if } t\in I^0_{\gamma,i},\end{array}\right. \ i= 1, 2.
\label{E4.3}
\end{equation}
Let us denote
\[
a(t) = \int_\omega g_{v_i}(t)\,d\bar u_i(t)\ \text{ and }\ a_k(t) = \int_\omega \proj_{[-k,+k]}(g_{v_i}(t))\,d\bar u_i(t)\ \text{ for } k \ge 1.
\]
With \eqref{E4.3} and Lebesgue's theorem we infer
\begin{equation}
j'(\bar u_i(t);v_i(t)) = a(t)+ \|v_{is}(t)\|_\mo\left\{\begin{array}{l} = 0 \text{ if } t \in I^+_{\gamma,i},\\ \le 0 \text{ if } t\in I^0_{\gamma,i},\end{array}\right. \text{and}\, \lim_{k \to \infty} a_k(t) = a(t),\label{E4.4}
\end{equation}
in the a.e.~sense. Now, we set
\[
g^k_{v_i}(t) = \proj_{[-k,+k]}(g_{v_i}(t)) + \frac{a(t) - a_k(t)}{\gamma}g_{\bar u_i}(t)
\]
and
\[
dv_{k,i}(t) = \left\{\begin{array}{cl}0 & \text{if } \displaystyle \gamma - \frac{1}{k} < \|\bar u_i(t)\|_\mo < \gamma,\\g^k_{v_i}(t)d\bar u_i(t) + dv_{is}(t) & \text{if } t \in I_{\gamma,i},\\ dv_i(t) & \text{otherwise}.\end{array}\right.
\]
Below we shall argue that $\bv_k \to \bv$ in $L^q(I;\bM)$. From \eqref{E4.4} we get
\begin{align*}
&j'(\bar u_i(t);v_{k,i}(t)) = a_k(t) + \frac{a(t) - a_k(t)}{\gamma}\int_\omega g_{\bar u_i}(t)\, d\bar u_i(t) + \|v_{is}(t)\|_\mo\\
& = a_k(t) + \frac{a(t) - a_k(t)}{\gamma}\int_\omega d|\bar u_i(t)| + \|v_{is}(t)\|_\mo\\
& = a(t) + \|v_{is}(t)\|_\mo \left\{\begin{array}{l} = 0 \text{ if } t \in I^+_{\gamma,i},\\ \le 0 \text{ if } t\in I^0_{\gamma,i}.\end{array}\right.
\end{align*}
Moreover, from \eqref{E3.19} we have that $\frac{\partial\mL}{\partial\bu}(\bar\bu,\bar\bpsi)\bv$ only depends on the singular part of $\bv(t)$ with respect to $\bar\bu(t)$. Since the singular part of $\bv_k(t)$ is zero or equal to the singular part of $\bv(t)$, we conclude that $\frac{\partial\mL}{\partial\bu}(\bar\bu,\bar\bpsi)\bv_k = 0$. This identity along with $j'(\bar u_i(t);v_{k,i}(t)) = 0$ for $t \in I_{\gamma,i}^+$, $\bar\varphi_i(t) \equiv 0$ on $I\setminus I^+_{\gamma,i}$, and the equality
\[
\frac{\partial\mL}{\partial\bu}(\bar\bu,\bar\bpsi)\bv_k = J'(\bar\bu)\bv_k  + \sum_{i = 1}^2\int_0^T\|\bar\varphi_i(t)\|_{C_0(\omega)}j'(\bar u_i(t);v_{k,i}(t))\, dt
\]
imply that $J'(\bar\bu)\bv_k = 0$.

Next we prove that $\bar\bu + \rho\bv_k \in \uad$ for every $\rho > 0$ small enough. Indeed, first we observe that
\[
|g^k_{v,i}(t)| \le k + \frac{|a(t) - a_k(t)|}{\gamma} \le k + \frac{2}{\gamma}\int_\omega|g_{v_i}(t)|\, d|\bar u_i(t)| \le k + \frac{2}{\gamma}\|\bv\|_\lmo.
\]
Let us take $\rho_k > 0$ such that
\[
\rho_k\Big(1 + \frac{2}{\gamma}\Big)\|\bv\|_\lmo < \frac{1}{k}.
\]
For $i = 1, 2$ , using \eqref{E4.4}, we deduce for $t \in I_{\gamma,i}$ and $\rho \le \rho_k$
\begin{align*}
&\|\bar u_i(t) + \rho v_{k,i}(t)\|_\mo = \int_\omega|g_{\bar u_i}(t) + \rho g^k_{v_i}(t)|\, d|\bar u_i(t)| + \rho\|v_{is}(t)\|_\mo\\
&= \int_\omega(1 + \rho g^k_{v_i}(t))\, d\bar u_i^+ + \int_\omega(1 - \rho g^k_{v_i}(t))\, d\bar u_i^- + \rho\|v_{is}(t)\|_\mo\\
& = \gamma + \rho\Big(\int_\omega g^k_{v_i}(t)\, d\bar u_i + \|v_{is}(t)\|_\mo\Big) = \gamma + \rho\Big(a(t) + \|v_{is}(t)\|_\mo\Big) \le \gamma.
\end{align*}
Moreover, if $\|\bar u(t)\|_\mo \le \gamma - \frac{1}{k}$, then $\|\bar u(t) + \rho\bv_k(t)\|_\mo = \|\bar u(t) + \rho\bv(t)\|_\mo\le \gamma$ $\forall \rho < \rho_k$ holds. If $\gamma - \frac{1}{k} < \|\bar u(t)\|_\mo < \gamma$, then $\|\bar u(t) + \rho\bv_k(t)\|_\mo = \|\bar u(t)\|_\mo < \gamma$ is fulfilled. Thus, we have that $\bar\bu + \rho\bv_k \in \uad$ for every $\rho < \rho_k$.

Now, using that $\bar\bu$ is a local minimum of \Pb, $J'(\bar\bu)\bv_k = 0$ as proved before, and performing a Taylor expansion we get for $k$ fixed and $\rho$ small enough
\[
0 \le J(\bar\bu + \rho\bv_k)) - J(\bar\bu) = \rho J'(\bar\bu)\bv_k + \frac{\rho^2}{2}J''(\bar\bu + \theta\rho\bv_k)\bv_k^2 = \frac{\rho^2}{2}J''(\bar\bu + \theta\rho\bv_k)\bv_k^2.
\]
Dividing the expression by $\rho^2/2$, using the fact that $J:L^q(I;\bM) \to \mathbb{R}$ is of class $C^\infty$, and taking $\rho\to 0$ we infer $J''(\bar\bu)\bv_k^2 \ge 0$. Now, using again Lebesgue's theorem it follows that for almost every $t \in I$
\begin{align*}
&\lim_{k \to \infty}\|g^k_{v_i}(t) - g_{v_i}(t)\|_{L^1(|\bar u_i(t)|)} = 0\ \text{ and }\\
&\|g^k_{v_i}(t) - g_{v_i}(t)\|_{L^1(|\bar u_i(t)|)} \le 2 \|g_{v_i}(t)\|_{L^1(|\bar u_i(t)|)} \le 2\|v_i\|_\lmo
\end{align*}
Using these properties we easily obtain that $\bv_k \to \bv$ in $L^q(I;\bM)$. Then, with Theorem \ref{T3.2} we can pass to the limit when $k \to \infty$ in the above inequality and conclude that $J''(\bar\bu)\bv^2 \ge 0$.

Finally, if $\bv \in C_{\bar\bu} \setminus \lmo$, then we take $\{\bv_k\}_{k = 1}^\infty \subset \lmo$ defined as follows
\[
\bv_k(t) = \left\{\begin{array}{cl}0 &\text{if } \|\bv_k(t\|_\mo > k,\\\bv(t) &\text{otherwise.}\end{array}\right.
\]
It is straightforward to check $\bv_k \in C_{\bar\bu} \cap \lmo$ for every $k \ge 1$ and $\bv_k \to \bv$ in $\lmoq$. Hence, $J''(\bar\bu)\bv_k^2 \ge 0$ holds for every $k$, and passing to the limit we obtain $J''(\bar\bu)\bv^2 \ge 0$.
\end{proof}

In order to formulate a second order sufficient condition for local optimality we need to extend the cone of critical directions. Given $(\bar\bu,\bar\bvarphi) \in \uad \times \bV^{2,1}(0,T)$ satisfying \eqref{E3.13}--\eqref{E3.14}, we define for $\tau > 0$
\begin{align}
C^\tau_{\bar\bu} = &\{\bv \in \lmoq : v_i \text{ satisfies } \eqref{E4.6}, i = 1,2, \text{ and}\notag\\
&\frac{\partial\mL}{\partial\bu}(\bar\bu,\bar\bpsi)\bv \le \tau\|\bz_\bv\|_\bLqd\},
\label{E4.5}
\end{align}
where $\bz_\bv = G'(\bar\bu)\bv$.

\begin{equation}
\left\{\begin{array}{l}\text{For a.a. } t \in I: \text{ if } \|\bar u_i(t)\|_\mo = \gamma \text{ \rm then } j'(\bar u_i(t);v_i(t)) \le 0,\\
\text{moreover }\sum_{i = 1}^2\int_0^T\|\bar\varphi_i(t)\|_{C_0(\omega)}j'(\bar u_i(t);v_i(t))\, dt \ge -\tau\|\bz_\bv\|_\bLqd.\end{array}\right.
\label{E4.6}
\end{equation}

The last condition is a relaxation of the second condition of \eqref{E4.2}.

\begin{theorem}
Let $(\bar\bu,\bar\bvarphi) \in \uad \times \bV^{2,1}(0,T)$ satisfy \eqref{E3.13}--\eqref{E3.14}. Assume that
\begin{align}
&\by_0 \in \bBqp \text{ and } \exists r \in (4,q] \text{ such that } \by_d, \bar\by \in L^r(I;\bLf), \label{E4.7}\\
&\exists\tau > 0 \text{ and } \delta > 0 \text{ such that } J''(\bar\bu)\bv^2 \ge \delta\|\bz_\bv\|^2_\bLqd\ \ \forall \bv \in C^\tau_{\bar\bu}.
\label{E4.8}
\end{align}
Then, there exist $\kappa > 0$ and $\varepsilon > 0$ such that
\begin{equation}
J(\bar\bu) + \frac{\kappa}{2}\|\by_\bu - \bar\by\|^2_\bLqd \le J(\bu)\ \ \forall \bu \in \uad : \|\bu - \bar\bu\|_{L^q(I;\bWmop)} \le \varepsilon,
\label{E4.9}
\end{equation}
where $\bar\by = G(\bar\bu)$.
\label{T4.2}
\end{theorem}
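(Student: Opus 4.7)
The plan is to argue by contradiction. If \eqref{E4.9} fails for all $\kappa,\varepsilon > 0$, specializing to $\kappa = \varepsilon = 1/k$ produces a sequence $\{\bu_k\} \subset \uad$ with $\|\bu_k - \bar\bu\|_{L^q(I;\bWmop)} < 1/k$ and $J(\bu_k) < J(\bar\bu) + \rho_k^2/(2k)$, where $\rho_k := \|\by_{\bu_k} - \bar\by\|_\bLqd > 0$ (by strictness) and $\rho_k \to 0$ by Theorem~\ref{T2.3}. Set $\bv_k = \bu_k - \bar\bu$.

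The heart of the proof is a sharp two-sided bound on the Lagrangian increment. The sparsity structure \eqref{E3.14} forces $\bar\psi_i(t) := \|\bar\varphi_i(t)\|_{C_0(\omega)} = 0$ on $\{\|\bar u_i(t)\|_\mo < \gamma\}$, while on its complement $\|u_{k,i}(t)\|_\mo \le \gamma$; thus $\bar\psi_i(t)[j(u_{k,i}(t))-j(\bar u_i(t))] \le 0$ a.e.\ in $I$, which gives $\mL(\bu_k,\bar\bpsi)-\mL(\bar\bu,\bar\bpsi) \le J(\bu_k)-J(\bar\bu) < \rho_k^2/(2k)$. Combining this with the convexity estimate $j(u_{k,i})-j(\bar u_i) \ge j'(\bar u_i;v_{k,i})$, the Taylor expansion $J(\bu_k)-J(\bar\bu) = J'(\bar\bu)\bv_k + \tfrac12 J''(\bar\bu+\theta_k\bv_k)\bv_k^2$ for some $\theta_k \in (0,1)$, the identity \eqref{E3.18}, and $\frac{\partial\mL}{\partial\bu}(\bar\bu,\bar\bpsi)\bv_k \ge 0$ from \eqref{E3.20}, I obtain the pivotal estimate
\begin{equation*}
0 \le \frac{\partial\mL}{\partial\bu}(\bar\bu,\bar\bpsi)\bv_k + \tfrac12 J''(\bar\bu+\theta_k\bv_k)\bv_k^2 \le \frac{\rho_k^2}{2k}.
\end{equation*}

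Next I compare $\rho_k$ with $\lambda_k := \|\bz_{\bv_k}\|_\bLqd$ through the decomposition $\by_{\bu_k}-\bar\by = \bz_{\bv_k}+r_k$. The remainder $r_k$ solves an Oseen system driven by an expression quadratic in $(\bz_{\bv_k},r_k)$; the maximal parabolic regularity of \cite{Casas-Kunisch2020}, combined with the extra regularity \eqref{E4.7} and the compact embedding $\mY \subset L^2(I;\bH_{2p})$, yields $\|r_k\|_\bLqd = o(\lambda_k)$, so $\lambda_k > 0$ eventually and $\rho_k \le 2\lambda_k$. From \eqref{E3.2}, \eqref{E3.12}, \eqref{E4.7}, and Gagliardo--Ladyzhenskaya inequalities I also derive the qualitative bound $|J''(\bar\bu+\theta_k\bv_k)\bv_k^2| \le C\lambda_k^2$, whence Taylor gives $J'(\bar\bu)\bv_k \le \rho_k^2/(2k) + C\lambda_k^2/2 \le C'\lambda_k^2$. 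I then verify $\bv_k \in C^\tau_{\bar\bu}$ for $k$ large: the pointwise inequality $j'(\bar u_i(t);v_{k,i}(t)) \le 0$ on $\{\|\bar u_i(t)\|_\mo = \gamma\}$ follows from $\|\bar u_i + \rho v_{k,i}\|_\mo \le \gamma$ for $\rho \in [0,1]$; noting that the sum $\sum_i\int_0^T\bar\psi_i j'(\bar u_i;v_{k,i})\,dt$ is nonpositive (since $\bar\psi_i j'(\bar u_i;v_{k,i}) \le \bar\psi_i[j(u_{k,i})-j(\bar u_i)] \le 0$), the slope bound follows from $\frac{\partial\mL}{\partial\bu}(\bar\bu,\bar\bpsi)\bv_k \le J'(\bar\bu)\bv_k \le C'\lambda_k^2 \le \tau\lambda_k$, and the remaining inequality in \eqref{E4.6} from $\sum_i\int_0^T\bar\psi_i j'(\bar u_i;v_{k,i})\,dt = \frac{\partial\mL}{\partial\bu}(\bar\bu,\bar\bpsi)\bv_k - J'(\bar\bu)\bv_k \ge -J'(\bar\bu)\bv_k \ge -\tau\lambda_k$.

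With $\bv_k \in C^\tau_{\bar\bu}$, hypothesis \eqref{E4.8} yields $\delta\lambda_k^2 \le J''(\bar\bu)\bv_k^2$, while the pivotal estimate and $\rho_k \le 2\lambda_k$ give $J''(\bar\bu+\theta_k\bv_k)\bv_k^2 \le \rho_k^2/k \le 4\lambda_k^2/k$. Adding and subtracting $J''(\bar\bu+\theta_k\bv_k)\bv_k^2$, using the continuity estimate $[J''(\bar\bu) - J''(\bar\bu+\theta_k\bv_k)]\bv_k^2 = o(\lambda_k^2)$, dividing by $\lambda_k^2 > 0$, and letting $k \to \infty$ produces $\delta \le 0$, the required contradiction. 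The principal technical obstacle is this last continuity estimate: one must control both $\bz_{\bv_k,\bar\bu+\theta_k\bv_k} - \bz_{\bv_k,\bar\bu}$ and $\bvarphi_{\bar\bu+\theta_k\bv_k} - \bar\bvarphi$ in norms strong enough to absorb the trilinear term in \eqref{E3.2}, even though $\bz_{\bv_k}$ only belongs to the mixed space $\mY$ with a piece in $L^q(I;\bWp)$ for $p<2$. The difference $\bz_{\bv_k,\bar\bu+\theta_k\bv_k} - \bz_{\bv_k,\bar\bu}$ solves an Oseen system whose forcing couples $\by_{\bar\bu+\theta_k\bv_k} - \bar\by$ (which is small in $\mY$) with $\bz_{\bv_k}$; the regularity assumption \eqref{E4.7} and the $\bV^{2,1}(0,T)$ bound \eqref{E3.12} on $\bar\bvarphi$, exactly as exploited in the proof of Theorem~\ref{T3.2}, are what allow this product to be absorbed on the scale $o(\lambda_k^2)$.
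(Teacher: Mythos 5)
Your argument is correct in substance, but it takes a genuinely different route from the paper. You argue by contradiction: extract a sequence $\bu_k\to\bar\bu$ violating the quadratic growth, show that $\bv_k=\bu_k-\bar\bu$ eventually lies in the extended cone $C^\tau_{\bar\bu}$ (via the bound $J'(\bar\bu)\bv_k\le C\lambda_k^2\le\tau\lambda_k$ and the sign information from \eqref{E3.14} and convexity of $j$), and then combine \eqref{E4.8} with the pivotal Lagrangian estimate and the continuity of $J''$ to force $\delta\le 0$. The paper instead gives a direct proof: for each admissible $\bu$ in a fixed ball it distinguishes the cases $\bu-\bar\bu\notin C^\tau_{\bar\bu}$ (where one of the two defining inequalities of the cone fails and the first-order term $\frac{\partial\mL}{\partial\bu}(\bar\bu,\bar\bpsi)(\bu-\bar\bu)$, respectively $J'(\bar\bu)(\bu-\bar\bu)$, already dominates and yields the growth) and $\bu-\bar\bu\in C^\tau_{\bar\bu}$ (where \eqref{E4.8} and the $J''$-continuity lemma apply). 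The two proofs are dual to one another and consume exactly the same technical substrate (Lemmas \ref{L4.1}--\ref{L4.7}: Lipschitz stability of $G$, the comparison $\|\by_\bu-\bar\by\|_\bLqd\le 2\|\bz_{\bu-\bar\bu}\|_\bLqd$, the $L^r(I;\bLf)$ regularity of $\by_\bu$, the adjoint regularity, and $|[J''(\bu)-J''(\bar\bu)](\bu-\bar\bu)^2|\le\rho\|\bz_{\bu-\bar\bu}\|^2_\bLqd$). What the paper's direct argument buys is explicit, quantitative constants $\kappa=\min\{\tau/4,\delta/8\}$ and $\varepsilon$, with no passage to subsequences; what your argument buys is a somewhat shorter logical skeleton once the lemmas are in place, since you never need to analyze directions outside the cone for their own sake.

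One imprecision worth fixing: to absorb the trilinear term $\int_Q(\bz\cdot\bna)\bvarphi\,\bz\,dx\,dt$ with $\bz$ controlled only in $\bLqd$, the $\bV^{2,1}(0,T)$ regularity \eqref{E3.12} of the adjoint is \emph{not} sufficient; you need $\bna\bvarphi_\bu\in L^\infty(Q)$, i.e.\ the $C(\bar I;\mathbf{C}^1(\bar\Omega))$ regularity of Lemma \ref{L4.5}, which is where hypothesis \eqref{E4.7} genuinely enters (through the $L^r(I;\bLf)$, $r>4$, regularity of $\by_\bu$ and a continuation argument for the operator $L_t$). Your sketch points at \eqref{E4.7} and at the adjoint estimates, so the right ingredients are identified, but the claim that the $\bV^{2,1}(0,T)$ bound is what allows the absorption should be replaced by an appeal to this stronger regularity.
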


\begin{remark}
Notice that in the proof of Theorem \ref{T3.2} the continuous embedding $\mY \subset L^4(I;\bLf))$ was established. Hence, the assumption $\bar\by \in L^r(I;\bLf)$ for some $r > 4$ is not too restrictive. Actually, we think that this regularity is enjoyed by the solutions of the state equation, but we have not been able to prove it.
\label{R4.1}
\end{remark}

In order to prove this theorem we need to establish some lemmas.

\begin{lemma}
There exists a constant $M_\gamma$ such that
\begin{equation}
\|\by_\bu - \bar\by\|_\mY \le M_\gamma\|\bu - \bar\bu\|_{L^q(I;\bWmop)}\ \ \forall \bu \in \uad.
\label{E4.10}
\end{equation}
\label{L4.1}
\end{lemma}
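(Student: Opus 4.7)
The plan is to exploit the factorization $G = G_0 \circ B$ introduced in the proof of Theorem~\ref{T2.2}, where $G_0:L^q(I;\bWmop)\to \mY$ is the nonlinear solution operator of \eqref{E2.5} and $B\bu = \bef_0+\chi_\omega\bu$. Since $\by_\bu = G_0(B\bu)$, $\bar\by = G_0(B\bar\bu)$, and $G_0$ is of class $C^\infty$, the mean value theorem gives
\[
\by_\bu - \bar\by = \int_0^1 G_0'\bigl(\bef_0+\chi_\omega(\bar\bu+s(\bu-\bar\bu))\bigr)\bigl(\chi_\omega(\bu-\bar\bu)\bigr)\, ds.
\]
Convexity of $\uad$ ensures $\tilde\bu_s := \bar\bu + s(\bu-\bar\bu) \in \uad$ for every $s\in[0,1]$, so in the integrand the linearization is always taken at a state $\by_{\tilde\bu_s}$ arising from an admissible control.

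The reduction of \eqref{E4.10} is then to a uniform operator bound of the form
\[
\|G_0'(\bef)\|_{\mathcal{L}(L^q(I;\bWmop),\,\mY)} \le \kappa(\|\by_\bef\|_\mY),
\]
with $\kappa:[0,\infty)\to[0,\infty)$ nondecreasing. This is a purely linear statement about the Oseen system \eqref{E2.3}: a solution $\bz$ driven by a right-hand side in $L^q(I;\bWmop)$ with reference state $\by_\bef$ should obey $\|\bz\|_\mY \le \kappa(\|\by_\bef\|_\mY)\|\chi_\omega\bv\|_{L^q(I;\bWmop)}$. Theorem~5.1 in \cite{Casas-Kunisch2020}, which already underlies Theorem~\ref{T2.2}, provides exactly such an estimate with constants depending on the reference state only through its $\mY$-norm and the fixed data; I would invoke it rather than redo the energy and maximal-regularity estimates.

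Combining the two ingredients closes the argument. Theorem~\ref{T2.1}, together with the continuous embedding $\bM\hookrightarrow\bWmop$ and the uniform bound $\|\tilde\bu\|_\lmo\le \gamma$ for $\tilde\bu\in\uad$, yields
\[
\|\by_{\tilde\bu}\|_\mY \le \eta_{p,q}\bigl(\|\bef_0\|_{L^q(I;\bWpc')}+C_{p,\Omega}\,T^{1/q}\gamma+\|\by_0\|_{\bY_0}\bigr) =: R_\gamma,
\]
uniformly over $\tilde\bu \in \uad$. Inserting this into the mean value representation and using $\|\chi_\omega(\bu-\bar\bu)\|_{L^q(I;\bWmop)} \le \|\bu-\bar\bu\|_{L^q(I;\bWmop)}$ yields \eqref{E4.10} with $M_\gamma = \kappa(R_\gamma)$. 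I expect the main obstacle to be precisely the second step: one has to check that the constants in the linear Oseen estimate depend on the reference state only through $\|\by_\bef\|_\mY$ (and not on the particular $\bef$ that produced it), which is a matter of carefully tracking parameter dependencies through the fixed-point or Galerkin construction of \cite{Casas-Kunisch2020} rather than performing new estimates.
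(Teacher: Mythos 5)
Your proposal is correct and follows essentially the same route as the paper: the paper also writes $\by_\bu-\bar\by$ via the mean value theorem for $G_0$, bounds it by $\sup_{\bv\in\uad}\|G_0'\|_{\mL(L^q(I;\bWmop),\mY)}\,\|\bu-\bar\bu\|_{L^q(I;\bWmop)}$, and refers to the proof of Theorem~5.1 in \cite{Casas-Kunisch2020} for the finiteness of that supremum. You merely spell out more explicitly (via Theorem~\ref{T2.1} and the uniform bound $\|\tilde\bu\|_\lmo\le\gamma$) why the constant depends only on $\gamma$ and the fixed data, which is exactly the dependency the paper leaves implicit in its citation.
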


\begin{proof}
Let $G_0:L^q(I;\bWmop) \longrightarrow \mY$ be as defined in the proof of Theorem \ref{T2.2}. Then, from mean value theorem we infer
\begin{align*}
\|\by_\bu - \bar\by\|_\mY &= \|G_0(\chi_\omega\bu) - G_0(\chi_\omega\bar\bu)\|_\mY\\
&\le \sup_{\bv \in \uad}\|G_0'(\chi_\omega\bv)\|_{\mL(L^q(I;\bWmop),\mY)}\|\bu - \bar\bu\|_{L^q(I;\bWmop)}\\
&= M_\gamma\|\bu - \bar\bu\|_{L^q(I;\bWmop)}.
\end{align*}
The constant $M_\gamma$ is finite; see the proof of \cite[Theorem 5.1]{Casas-Kunisch2020}.
\end{proof}

\begin{lemma}
Given $\bu \in \uad$ and $\bv \in \lmoq$, we set $\bz_{\bu,\bv} = G'(\bu)\bv$ and $\bz_\bv = G'(\bar\bu)\bv$. Then, there exist constants $M_1 > 0$ and $M_2 > 0$ independent of $\bu$ and $\bv$ such that
\begin{align}
&\|\bz_{\bu,\bv} - \bz_\bv\|_\bLqd \le M_1\|\bu - \bar\bu\|_{L^q(I;\bWmop)}\|\bz_\bv\|_\bLqd,
\label{E4.11}\\
&\|\bz_{\bu,\bv}\|_\bLqd \le M_2\|\bz_\bv\|_\bLqd.
\label{E4.12}
\end{align}
\label{L4.2}
\end{lemma}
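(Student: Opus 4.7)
The plan is to establish \eqref{E4.11} via a duality argument against an adjoint Oseen problem, and then to read off \eqref{E4.12} from the triangle inequality. First I would set $\bw:=\bz_{\bu,\bv}-\bz_\bv$ and subtract the two Oseen systems \eqref{E2.3} for $\bz_{\bu,\bv}$ and $\bz_\bv$. Splitting the convective terms,
\[
(\by_\bu\cdot\bna)\bz_{\bu,\bv} - (\bar\by\cdot\bna)\bz_\bv = (\by_\bu\cdot\bna)\bw + ((\by_\bu-\bar\by)\cdot\bna)\bz_\bv,
\]
and analogously for $(\bz\cdot\bna)\by$, shows that $\bw$ solves the Oseen system linearized at $\by_\bu$ with zero initial data, homogeneous Dirichlet boundary condition, and source
\[
\bg = -((\by_\bu-\bar\by)\cdot\bna)\bz_\bv - (\bz_\bv\cdot\bna)(\by_\bu-\bar\by).
\]

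For each test function $\mathbf{h}\in\bLqd$ I would introduce the adjoint state $\bphi_\mathbf{h}\in\bV^{2,1}(0,T)$ defined by the analogue of \eqref{E3.13} in which $\bar\by$ is replaced by $\by_\bu$ and the right-hand side is $\mathbf{h}$ instead of $\by_\bu-\by_d$. The proof of Theorem \ref{T3.2} is linear in the source and transfers verbatim, producing $\|\bphi_\mathbf{h}\|_{\bV^{2,1}(0,T)}\le C\|\mathbf{h}\|_\bLqd$ with a constant that, by \eqref{E3.12}, depends on $\bu$ only through $\|\by_\bu\|_\mY$; the uniform bound \eqref{E2.2} on $\{\by_\bu:\bu\in\uad\}$ then makes $C$ uniform on $\uad$.

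The duality identity, obtained by testing the $\bw$-equation against $\bphi_\mathbf{h}$ and integrating by parts using $\bw(0)=0$, $\bphi_\mathbf{h}(T)=0$ and $\div\bw=\div\bphi_\mathbf{h}=0$, reads $\int_Q\bw\cdot\mathbf{h}\,dx\,dt = \int_Q\bg\cdot\bphi_\mathbf{h}\,dx\,dt$. Antisymmetry of $b$ on divergence-free fields recasts the right-hand side as
\[
\int_0^T\bigl[b(\by_\bu-\bar\by,\bphi_\mathbf{h},\bz_\bv)+b(\bz_\bv,\bphi_\mathbf{h},\by_\bu-\bar\by)\bigr]\,dt.
\]
The crucial observation is that a H\"older bound with spatial exponents $(4,4,2)$ places $\bz_\bv$ (respectively $\by_\bu-\bar\by$) in $\bLd$ for each trilinear term, and a subsequent H\"older in time with exponents $(4,4,2)$ then extracts exactly the $\bLqd$-norm of $\bz_\bv$ demanded by the statement. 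The two $L^4(I;\bLf)$-factors that remain are handled by the embedding $\mY\subset L^4(I;\bLf)$ together with Lemma \ref{L4.1}, which gives $\|\by_\bu-\bar\by\|_{L^4(I;\bLf)}\le CM_\gamma\|\bu-\bar\bu\|_{L^q(I;\bWmop)}$, and by the two-dimensional Gagliardo--Nirenberg inequality $\|\bna\bphi\|_\bLf^2\le C\|\bna\bphi\|_\bLd\|\bphi\|_\bHt$ combined with $\bV^{2,1}(0,T)\subset L^\infty(I;\bV)\cap L^2(I;\bHt)$, which gives $\|\bna\bphi_\mathbf{h}\|_{L^4(I;\bLf)}\le C\|\bphi_\mathbf{h}\|_{\bV^{2,1}(0,T)}\le C'\|\mathbf{h}\|_\bLqd$. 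Taking the supremum over $\mathbf{h}$ with $\|\mathbf{h}\|_\bLqd\le 1$ then delivers \eqref{E4.11} with $M_1$ proportional to $M_\gamma$. Estimate \eqref{E4.12} follows at once from the triangle inequality $\|\bz_{\bu,\bv}\|_\bLqd\le\|\bw\|_\bLqd+\|\bz_\bv\|_\bLqd$, from \eqref{E4.11}, and from the uniform bound $\|\bu-\bar\bu\|_{L^q(I;\bWmop)}\le 2\gamma T^{1/q}C_{p,\Omega}$ available on $\uad$ through the embedding $\lmo\subset L^q(I;\bWmop)$.

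The main obstacle I anticipate lies not in the H\"older chain above, but in establishing the \emph{uniform} adjoint estimate $\|\bphi_\mathbf{h}\|_{\bV^{2,1}(0,T)}\le C\|\mathbf{h}\|_\bLqd$ for $\bu\in\uad$. This requires revisiting the Faedo--Galerkin argument in Theorem \ref{T3.2} and checking that the constants appearing in the analogues of \eqref{E3.8}, \eqref{E3.9} and \eqref{E3.11} depend monotonically on $\|\by_\bu\|_{L^4(I;\bLf)}$ alone, which \eqref{E2.2} controls uniformly on $\uad$. Once this uniformity is secured, the remaining duality computation is routine.
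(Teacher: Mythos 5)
Your proposal is correct and follows essentially the same route as the paper: subtract the two Oseen systems to get the error equation with source $\bg = -[(\by_\bu-\bar\by)\cdot\bna]\bz_\bv-(\bz_\bv\cdot\bna)(\by_\bu-\bar\by)$, test against the adjoint problem linearized at $\by_\bu$ with datum $\mathbf{h}\in\bLqd$, and close with the H\"older splitting $L^4(I;\bLf)\cdot L^4(I;\mathbf{W}^{1,4}(\Omega))\cdot\bLqd$ together with Lemma \ref{L4.1} and the uniform $\bV^{2,1}(0,T)$ adjoint bound (the paper's \eqref{E4.13}, which it also obtains from the uniformity of \eqref{E3.12} over the bounded set $\uad$). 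The only cosmetic difference is that you derive the embedding $\bV^{2,1}(0,T)\subset L^4(I;\mathbf{W}^{1,4}(\Omega))$ by hand via Gagliardo--Nirenberg, whereas the paper invokes it directly.
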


\begin{proof}
According to \eqref{E2.3}, the equations satisfied by $\bz_{\bu,\bv}$ and $\bz_\bv$ are
\begin{align*}
&\frac{\partial\bz_{\bu,\bv}}{\partial t} - \nu\Delta\bz_{\bu,\bv} + (\by_\bu\cdot \bna)\bz_{\bu,\bv} + (\bz_{\bu,\bv} \cdot\bna)\by_\bu + \nabla\q_\bu = \chi_\omega\bv,\\
&\frac{\partial\bz_\bv}{\partial t}-\nu\Delta\bz_\bv + (\bar\by\cdot \bna)\bz_\bv + (\bz_\bv \cdot\bna)\bar\by + \nabla\bar\q = \chi_\omega\bv.
\end{align*}
Subtracting both equations and setting $\be = \bz_{\bu,\bv} - \bz_\bv$ and $\q = \q_\bv - \bar\q$ we get
\[
\left\{\begin{array}{l}\displaystyle\frac{\partial\be}{\partial t}-\nu\Delta\be + (\by_\bu\cdot \bna)\be + (\be \cdot\bna)\by_\bu + \nabla\q = \bg \ \text{ in } Q,\\[1.2ex]\div\be = 0 \ \text{ in } Q, \ \be = 0 \ \text{ on } \Sigma,\ \be(0) = 0 \text{ in } \Omega\end{array}\right.
\]
where $\bg = -[(\by_\bu - \bar\by)\cdot\bna]\bz_\bv - (\bz_\bv\cdot \bna)(\by_\bu - \bar\by)$. From \cite[Lemma 2.1]{Casas-Kunisch2020} we get that $\bg \in L^2(I;\bHmo)$. Then, \cite[Theorem 2.7]{Casas-Kunisch2020} implies that \eqref{E4.8} has a unique solution $(\be,\q) \in \bWoT \times W^{-1,\infty}(I;L^2(\Omega)/\mathbb{R})$. Take $\bef \in \bLqd$ arbitrary and let $\bvarphi \in \bV^{2,1}(0,T)$ be the solution of the adjoint state equation \eqref{E3.3} with $\by_\bu - \by_d$ replaced by $\bef$. We have the estimate
\begin{equation}
\|\bvarphi\|_{\bV^{2,1}(0,T)} \le C\|\bef\|_\bLqd\quad \forall \bef \in \bLqd,\ \forall \bu \in \uad.
\label{E4.13}
\end{equation}
Then, we have
\begin{align*}
&\int_0^T\int_\Omega\bef \be\, dx\, dt = \int_0^T\int_\Omega\Big[-\frac{\partial\bvarphi}{\partial t} - \nu\Delta\bvarphi - (\by_\bu \cdot \bna)\bvarphi - (\bna\bvarphi)^T\by_\bu + \nabla \pi\Big]\be\, dx\, dt\\
&= \int_0^T\Big[\frac{d}{dt}(\be,\bvarphi)_\bLd + a(\be,\bvarphi) + b(\by_\bu,\be,\bvarphi) + b(\be(t),\by_\bu,\bvarphi)\Big]\, dt\\
&= \int_0^T\langle\bg,\bvarphi\rangle_{\bHmo,\bHo}\, dt = -\int_0^T\big\{b(\by_\bu - \bar\by,\bz_\bv,\bvarphi) + b(\bz_\bv,\by_\bu - \bar\by,\bvarphi)\big\}\, dt.
\end{align*}
Let us estimate the last integral. To this end we use the embeddings $\bHto \subset L^4(I;\mathbf{W}^{1,4}(\Omega))$ and $\mY \subset L^4(I;\bLf)$, and estimates \eqref{E4.10} and \eqref{E4.13}:
\begin{align*}
&\int_0^T|b(\by_\bu - \bar\by,\bz_\bv,\bvarphi) |\, dt = \int_0^T|b(\by_\bu - \bar\by,\bvarphi,\bz_\bv) |\, dt\\
&\le \|\by_\bu - \bar\by\|_{L^4(I;\bLf)}\|\bvarphi\|_{L^4(I;\mathbf{W}^{1,4}(\Omega))}\|\bz_\bv\|_\bLqd\\
&\le C'\|\bef\|_\bLqd\|\by_\bu - \bar\by\|_\mY\|\bz_\bv\|_\bLqd\\
& \le C''M_\gamma \|\bef\|_\bLqd\|\bu - \bar\bu\|_{L^q(I;\bWmop)}\|\bz_\bv\|_\bLqd.
\end{align*}
The term $b(\bz_\bv,\by_\bu - \bar\by,\bvarphi)$ is estimated in the same way. Thus, we have
\[
\int_0^T\int_\Omega\bef \be\, dx\, dt \le M_1\|\bef\|_\bLqd\|\bu - \bar\bu\|_{L^q(I;\bWmop)}\|\bz_\bv\|_\bLqd
\]
$\forall \bef \in L^2(Q)$ and, consequently, \eqref{E4.11} is fulfilled. Finally, \eqref{E4.12} follows from \eqref{E4.11} and  the inequality
\[
\|\bz_{\bu,\bv}\|_\bLqd \le \|\bz_{\bu,\bv} - \bz_\bv\|_\bLqd + \|\bz_\bv\|_\bLqd.
\]
\end{proof}

\begin{lemma}
There exists $\varepsilon_0 > 0$ such that $\forall \bu \in \uad$ with $\|\bu - \bar\bu\|_{L^q(I;\bWop)} \le \varepsilon_0$ the inequality
\begin{equation}
\|\by_\bu - \bar\by\|_\bLqd \le 2\|\bz_{\bu - \bar\bu}\|_\bLqd
\label{E4.14}
\end{equation}
holds, where $\bz_{\bu - \bar\bu} = G'(\bar\bu)(\bu - \bar\bu)$.
\label{L4.3}
\end{lemma}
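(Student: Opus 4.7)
The strategy is the classical linearization idea: express $\by_\bu - \bar\by$ as the linearized increment $\bz_{\bu-\bar\bu}$ plus a quadratic remainder, and then show that the remainder is small compared to $\bz_{\bu-\bar\bu}$ in the $L^2(Q)$ norm when $\bu$ is close to $\bar\bu$. The inequality \eqref{E4.14} then follows immediately from a triangle inequality.

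Concretely, set $\bw=\by_\bu-\bar\by$, $\bz=\bz_{\bu-\bar\bu}$, and $\be=\bw-\bz$. Subtracting the state equations for $\by_\bu$ and $\bar\by$ and using the algebraic identity
$(\by_\bu\!\cdot\!\bna)\by_\bu - (\bar\by\!\cdot\!\bna)\bar\by = (\bar\by\!\cdot\!\bna)\bw + (\bw\!\cdot\!\bna)\bar\by + (\bw\!\cdot\!\bna)\bw,$
I find that $\be$ satisfies, together with some pressure, an Oseen system linearized around $\bar\by$, with zero initial data, and with right-hand side $-(\bw\!\cdot\!\bna)\bw$. The main task is to estimate $\|\be\|_\bLqd$ by a quantity of the form (something small)$\,\cdot\,\|\bw\|_\bLqd$, so that the resulting term $\|\be\|_\bLqd$ can be absorbed into $\tfrac12\|\bw\|_\bLqd$.

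For this I would mimic the duality argument of Lemma \ref{L4.2}. Fix $\bef\in\bLqd$ and let $\bvarphi\in\bV^{2,1}(0,T)$ solve the adjoint Oseen equation around $\bar\by$ with forcing $\bef$; by the estimate \eqref{E4.13} already derived in the proof of Lemma \ref{L4.2}, $\|\bvarphi\|_{\bV^{2,1}(0,T)}\le C\|\bef\|_\bLqd$. Testing the $\be$-equation against $\bvarphi$ and using the antisymmetry of $b$ gives $\int_Q \bef\cdot\be = b(\bw,\bvarphi,\bw)$. Using H\"older in space ($|b(\bw,\bvarphi,\bw)|\le \|\bw\|_\bLf\,\|\nabla\bvarphi\|_\bLf\,\|\bw\|_\bLd$), then H\"older in time, and the embedding $\bV^{2,1}(0,T)\subset L^4(I;\mathbf{W}^{1,4}(\Omega))$ (which holds in 2D by Gagliardo-Nirenberg exactly as used in the proof of Lemma \ref{L4.2}), one obtains
$\bigl|\textstyle\int_Q \bef\cdot\be\bigr|\le C\,\|\bw\|_{L^4(I;\bLf)}\,\|\bw\|_\bLqd\,\|\bef\|_\bLqd.$
Taking the supremum over $\bef$ in the unit ball of $\bLqd$ yields $\|\be\|_\bLqd\le C\,\|\bw\|_{L^4(I;\bLf)}\,\|\bw\|_\bLqd$.

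To finish, I invoke Lemma \ref{L4.1} together with the embedding $\mY\subset L^4(I;\bLf)$ (already recorded in the proof of Theorem \ref{T3.2}) to conclude
$\|\bw\|_{L^4(I;\bLf)}\le C'\|\bw\|_\mY\le C'M_\gamma\|\bu-\bar\bu\|_{L^q(I;\bWmop)}.$
Choosing $\varepsilon_0>0$ so small that $CC'M_\gamma\varepsilon_0\le\tfrac12$, the estimate becomes $\|\be\|_\bLqd\le\tfrac12\|\bw\|_\bLqd$ whenever $\|\bu-\bar\bu\|_{L^q(I;\bWmop)}\le\varepsilon_0$, and the triangle inequality $\|\bw\|_\bLqd\le\|\bz\|_\bLqd+\|\be\|_\bLqd$ delivers \eqref{E4.14}. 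The only real obstacle is keeping track of the right functional setting so that the duality test is legitimate and $\bvarphi$ has enough regularity to absorb $\nabla\bvarphi$ in $L^4(I;\bLf)$; everything else is routine once the $\be$-equation and the adjoint estimate of Lemma \ref{L4.2} are in place.
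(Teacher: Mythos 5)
Your proposal is correct and follows essentially the same route as the paper: the same decomposition $\be=\by_\bu-\bar\by-\bz_{\bu-\bar\bu}$ with remainder equation driven by $-[(\by_\bu-\bar\by)\cdot\bna](\by_\bu-\bar\by)$, the same duality argument borrowed from Lemma \ref{L4.2} to get $\|\be\|_\bLqd\le C\|\by_\bu-\bar\by\|_\mY\|\by_\bu-\bar\by\|_\bLqd$, and the same absorption via Lemma \ref{L4.1} and the choice $\varepsilon_0<\tfrac{1}{2C}$. The paper merely compresses the duality step into the phrase ``arguing as in the proof of Lemma \ref{L4.2},'' which you have correctly unpacked.
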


\begin{proof}
Let us consider the equations satisfied by $\by_\bu$, $\bar\by$ and $\bz_{\bu - \bar\bu}$:
\begin{align*}
&\frac{\partial\by_\bu}{\partial t} - \nu\Delta\by_\bu + (\by_\bu \cdot \bna)\by_\bu + \nabla\p_\bu = \chi_\omega\bu,\\
&\frac{\partial\bar\by}{\partial t} - \nu\Delta\bar\by + (\bar\by \cdot \bna)\bar\by + \nabla\bar\p = \chi_\omega\bar\bu,\\
&\frac{\partial\bz_{\bu - \bar\bu}}{\partial t}-\nu\Delta\bz_{\bu - \bar\bu} + (\bar\by \cdot \bna)\bz_{\bu - \bar\bu} + (\bz_{\bu - \bar\bu}\cdot \bna)\bar\by + \nabla\q_{\bu - \bar\bu} = \chi_\omega(\bu - \bar\bu).
\end{align*}
Setting $\be = \by_\bu - \bar\by - \bz_{\bu - \bar\bu}$ and $\q = \p_u - \bar\p - \q_{\bu - \bar\bu}$, we infer from the above equations
\[
\frac{\partial\be}{\partial t} - \nu\Delta\be + (\bar\by \cdot \bna)\be + (\be\cdot \bna)\bar\by + \nabla\q = -[(\by_\bu - \bar\by)\cdot\bna](\by_\bu - \bar\by).
\]
Using again \cite[Lemma 2.1]{Casas-Kunisch2020}, we have that $[(\by_\bu - \bar\by)\cdot\bna](\by_\bu - \bar\by) \in L^2(I;\bHmo)$ and, hence, $\be \in \bWoT$. Arguing as in the proof of Lemma \ref{L4.2} and using \eqref{E4.10} we infer
\begin{align*}
&\|\be\|_\bLqd \le C_1\|\by_\bu - \bar\by\|_\mY\|\by_\bu - \bar\by\|_\bLqd\\ &\le C_2\|\bu - \bar\bu\|_{L^q(I;\bWmop)}\|\by_\bu - \bar\by\|_\bLqd.
\end{align*}
Let us take $0 < \varepsilon_0 < \frac{1}{2C_2}$. Then, we have
\begin{align*}
&\|\by_\bu - \bar\by\|_\bLqd \le \|\be\|_\bLqd + \|\bz_{\bu - \bar\bu}\|_\bLqd\\
&\le \frac{1}{2}\|\by_\bu - \bar\by\|_\bLqd + \|\bz_{\bu - \bar\bu}\|_\bLqd,
\end{align*}
which implies \eqref{E4.14}.
\end{proof}

\begin{lemma}
Assume that \eqref{E4.7} holds. Then, there exists $\bar\varepsilon > 0$ such that $\by_\bu \in \mY \cap L^r(I;\bLf)$ for every $ \bu \in B_{\bar\varepsilon}(\bar\bu) \subset \lmoq$.
Moreover, if $\{\bu_k\}_{k = 1}^\infty \subset B_{\bar\varepsilon}(\bar\bu)$ is a sequence converging to $\bar\bu$ in $L^q(I;\bWmop)$, then $\by_{\bu_k} \to \bar\by$ in $L^r(I;\bLf)$ holds.
\label{L4.4}
\end{lemma}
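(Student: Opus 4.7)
The plan is to set $\bw = \by_\bu - \bar\by$ and regard the equation it satisfies as a linear Stokes system whose right-hand side lies in $L^r(I;\bWmop)$, then invoke maximal regularity with time exponent $r$ and spatial exponent $p$. Subtracting the Navier-Stokes equations for $\by_\bu$ and $\bar\by$ and using $\div\,\by_\bu = \div\,\bar\by = 0$ to rewrite the convective terms as divergences of tensor products, $\bw$ solves
\begin{equation*}
\left\{\begin{array}{l}\displaystyle\frac{\partial\bw}{\partial t} - \nu\Delta\bw + \nabla\q = \chi_\omega(\bu-\bar\bu) - \div(\by_\bu \otimes \bw + \bw \otimes \bar\by)\ \text{in}\ Q,\\[0.8ex]
\div\,\bw = 0\ \text{in}\ Q,\ \bw = 0\ \text{on}\ \Sigma,\ \bw(0) = 0\ \text{in}\ \Omega,\end{array}\right.
\end{equation*}
and once an $L^r(I;\bWp)$-bound on $\bw$ is available, the 2D Sobolev embedding $\bWp\hookrightarrow\bLf$ (valid under \eqref{E1.2}) will deliver $\bw\in L^r(I;\bLf)$.

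To bound the source in $L^r(I;\bWmop)$, first note that $\chi_\omega(\bu-\bar\bu)\in L^q(I;\bWmop)\subset L^r(I;\bWmop)$ because $r\le q$, with norm controlled by $\|\bu-\bar\bu\|_{L^q(I;\bWmop)}$ thanks to Lemma \ref{L4.1}. Writing $\by_\bu\otimes\bw=\bar\by\otimes\bw+\bw\otimes\bw$, the mixed bilinear terms $\div(\bw\otimes\bar\by)$ and $\div(\bar\by\otimes\bw)$ are estimated by combining $\bar\by\in L^r(I;\bLf)$ from \eqref{E4.7} with an $L^\infty(I;\mathbf{L}^s(\Omega))$-bound on $\bw$, where $s=4p/(4-p)$ so that $1/s+1/4=1/p$; this bound is furnished by $\bWoT\hookrightarrow L^\infty(I;\bLd)$ together with $\bWqpoT\hookrightarrow C([0,T];\bBqp)$ and a 2D Besov-Sobolev embedding of $\bBqp$, exploiting the fact that by \eqref{E4.7} the whole initial datum $\by_0\in\bBqp$ can be carried by the $\bWqpoT$-component. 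H\"older's inequality then places $\bw\otimes\bar\by$ and $\bar\by\otimes\bw$ in $L^r(I;\bLp)$. The self-quadratic term $\div(\bw\otimes\bw)$ requires $\bw\in L^{2r}(I;\mathbf{L}^{2p}(\Omega))$, which will follow by interpolating the announced $L^r(I;\bLf)$-regularity against the $L^\infty(I;\mathbf{L}^s(\Omega))$-bound with the same $s$: this forces a self-referential closure of the argument on a small ball of $\mathbf{W}_{r,p}(0,T)$, the radius $\bar\varepsilon$ being tuned so that the $\mY$-smallness of $\bw$ guaranteed by Lemma \ref{L4.1} absorbs the quadratic contribution against the maximal-regularity constant.

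With the source thereby controlled and $\bw(0)=0$ trivially in the trace space $\mathbf{B}_{p,r}(\Omega)$, maximal regularity for the Stokes problem yields $\bw\in\mathbf{W}_{r,p}(0,T)\subset L^r(I;\bWp)\subset L^r(I;\bLf)$, together with an estimate of the form
$$\|\by_\bu-\bar\by\|_{L^r(I;\bLf)} \le C(\gamma,\bar\by,\bef_0)\,\|\bu-\bar\bu\|_{L^q(I;\bWmop)}\quad\text{on } B_{\bar\varepsilon}(\bar\bu),$$
from which the $L^r(I;\bLf)$-convergence $\by_{\bu_k}\to\bar\by$ follows at once whenever $\bu_k\to\bar\bu$ in $L^q(I;\bWmop)$. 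The main technical obstacle is the self-quadratic absorption: verifying the interpolation embedding into $L^{2r}(I;\mathbf{L}^{2p}(\Omega))$, carefully tracking the constants in the maximal-regularity bound, and choosing $\bar\varepsilon$ small enough that the induced nonlinear map is a contraction on a ball of $\mathbf{W}_{r,p}(0,T)$.
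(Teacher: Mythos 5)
Your route is genuinely different from the paper's: the paper splits $\by_\bu=\by_S+\by$ into the Stokes solution driven by $\bef_0+\chi_\omega\bu$ (which lies in $\bWqpoT\subset L^r(I;\bLf)$ for free) plus a remainder, shows that at $\bar\bu$ the remainder belongs to $\mathbf{W}_{\frac{r}{2},2}(0,T)\subset L^r(I;\bLf)$, and then obtains the statement for all nearby $\bu$ from the implicit function theorem for $\mF:\mathbf{W}_{\frac{r}{2},2}(0,T)\times L^q(I;\bWmop)\to L^{\frac{r}{2}}(I;\bV')$, the work being concentrated in the isomorphism property of $\partial\mF/\partial\by(\tilde\by,\bar\bu)$. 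You instead perturb around $\bar\by$ and try to close a maximal-regularity estimate for $\bw=\by_\bu-\bar\by$ in $\mathbf{W}_{r,p}(0,T)$ by absorption; this is a reasonable plan and would even yield a quantitative Lipschitz estimate, but as written it has two genuine gaps. The first concerns the mixed terms: to place $\bw\otimes\bar\by$ in $L^r(I;\bLp)$ against $\bar\by\in L^r(I;\bLf)$ you need $\bw\in L^\infty(I;\mathbf{L}^s(\Omega))$ with $s=4p/(4-p)$, which exceeds $2$ as soon as $p>4/3$. Your proposed source for this bound does not deliver it: the $\bWoT$-component of $\bw$ is created by the nonlinearity, not by the initial datum, so it cannot be shifted into the $\bWqpoT$-component by choosing where $\by_0$ goes, and it is only controlled in $L^\infty(I;\bLd)$, i.e.\ $s=2$; the sum of an $L^\infty(I;\bLd)$ function and an $L^\infty(I;\mathbf{L}^s(\Omega))$ function is only in $L^\infty(I;\bLd)$. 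The step is therefore false for $p$ near $2$. It can be repaired by running the whole maximal-regularity argument with the spatial exponent fixed at $4/3$ (legitimate, since measures embed into $\mathbf{W}^{-1,4/3}(\Omega)$ and $\mathbf{W}^{1,4/3}_0(\Omega)\hookrightarrow\bLf$ in 2D, and the conclusion of the lemma does not involve $p$), but you must say so explicitly.

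The second gap is that the self-quadratic closure, which you defer as "the main technical obstacle," is in fact the heart of the proof and cannot be left as an announcement. Two things are missing: (i) an a priori reason why $\|\bw\|_{\mathbf{W}_{r,p}(0,T)}$ is finite before $\div(\bw\otimes\bw)$ can be absorbed into the left-hand side --- otherwise you must set up a genuine contraction on a ball of $\mathbf{W}_{r,p}(0,T)$ and then prove that its fixed point coincides with $\by_\bu-\bar\by$, which requires a uniqueness statement for the perturbation equation in a class larger than $\mathbf{W}_{r,p}(0,T)$; and (ii) the identification of the small factor driving the absorption. With $p=4/3$ the bound $\|\bw\otimes\bw\|_{L^r(I;\mathbf{L}^{4/3}(\Omega))}\le\|\bw\|_{L^\infty(I;\bLd)}\|\bw\|_{L^r(I;\bLf)}$ does the job, since Lemma \ref{L4.1} makes $\|\bw\|_{L^\infty(I;\bLd)}\le C\|\bw\|_\mY$ small on $B_{\bar\varepsilon}(\bar\bu)$ while $\|\bw\|_{L^r(I;\bLf)}$ is controlled by the maximal-regularity norm; but this chain of inequalities, together with the availability of maximal $L^r(\mathbf{W}^{-1,4/3})$-regularity for the instationary Stokes system (the analogue of \cite[Theorem 2.5]{Casas-Kunisch2020} with $q$ replaced by $r$), must be written out. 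Until then the proposal is an outline rather than a proof; the paper's implicit-function-theorem route avoids the absorption issue entirely at the price of the isomorphism argument.
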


\begin{proof}
The proof is split in three steps.

{\em Step I-} From \cite[Theorem 2.5]{Casas-Kunisch2020} we know that the system
\begin{equation}
\left\{\begin{array}{l}\displaystyle\frac{\partial \by_S}{\partial t} -\nu\Delta\by_S +\nabla\p_S = \bef_0 + \bu\chi_\omega\ \text{ in } Q,\\[1.2ex]\div\by_S = 0 \ \text{ in } Q, \ \by_S = 0 \ \text{ on } \Sigma,\ \by_S(0) = \by_0 \text{ in } \Omega\end{array}\right.
\label{E4.15}
\end{equation}
has a unique solution $\by_S \in \bWqpoT$ satisfying
\begin{equation}
\|\by_S\|_\bWqpoT \le C_1\Big(\|\bef_0\|_{L^q(I;\bWpc')} + \|\bu\|_{L^q(I;\bWmop)} + \|\by_0\|_{\bBqp}\Big)
\label{E4.16}
\end{equation}
for some constant $C_1$ independent of $\bu$. Since $p \ge \frac{4}{3}$ and $r \le q$, we have that $\by_S \in L^r(I;\bLf)$.

Now, we take $\by \in \bWoT$ as the solution of
\begin{equation}
\left\{\begin{array}{l}\displaystyle\frac{\partial \by}{\partial t} -\nu\Delta\by + (\by \cdot \bna)\by + (\by_S \cdot \bna)\by + (\by \cdot \bna)\by_S + \nabla\p = -(\by_S\cdot\nabla)\by_S\ \text{ in } Q,\\[1.2ex]\div\by = 0 \ \text{ in } Q, \ \by = 0 \ \text{ on } \Sigma,\ \by(0) = 0 \text{ in } \Omega.\end{array}\right.
\label{E4.17}
\end{equation}
The existence and uniqueness of $\by$ follows from \cite[Theorem 2.7]{Casas-Kunisch2020}, as well as the estimate
\begin{equation}
\|\by\|_\bWoT \le \hat\eta\Big(\|\by_S\|_{L^q(I;\bWp)}\Big),
\label{E4.18}
\end{equation}
where $\hat\eta:[0,\infty) \longrightarrow [0,\infty)$ is a nondecreasing function with $\hat\eta(0) = 0$. Obviously, the solution of \eqref{E1.1} is given by $\by_\bu = \by_S + \by$. In the sequel, applying  the implicit function theorem, we will prove that $\by \in L^r(I;\bLf)$ if $\bu \in B_{\bar\varepsilon}(\bar\bu)$ for some $\bar\varepsilon > 0$.

{\em Step II-} First, we write $\bar\by = \tilde\by_S + \tilde\by$ with $\tilde\by_S$ and $\tilde\by$ solutions of \eqref{E4.15} and \eqref{E4.16} with $\bu$ and $\by_S$ replaced by $\bar\bu$ and $\tilde\by_S$, respectively. Let us prove that $\tilde\by \in \mathbf{W}_{\frac{r}{2},2}(0,T)$. Observe that $\tilde\by$ satisfies the Stokes equations
\[
\frac{\partial \tilde\by}{\partial t} -\nu\Delta\tilde\by + \nabla\tilde\p = \bg \ \text{ in } Q,
\]
where $\bg =  -(\tilde\by_S \cdot \nabla)\tilde\by_S - (\tilde\by \cdot \bna)\tilde\by - (\tilde\by_S \cdot \bna)\tilde\by - (\tilde\by \cdot \bna)\tilde\by_S$. Then, using the maximal parabolic regularity for the Stokes system, it is enough to prove that $\bg \in L^{\frac{r}{2}}(I;\bHmo)$ to deduce that $\tilde\by \in \mathbf{W}_{\frac{r}{2},2}(0,T)$. First we observe that \eqref{E4.7} implies that $\tilde\by = \bar\by - \tilde\by_S \in L^r(I;\bLf)$. Let us prove $(\tilde\by_S \cdot \nabla)\tilde\by_S \in L^{\frac{r}{2}}(I;\bHmo)$. Indeed, given $\bz \in \bHo$ we have
\[
|\langle(\tilde\by_S(t) \cdot \nabla)\tilde\by_S(t),\bz\rangle| = |\langle(\tilde\by_S(t) \cdot \nabla)\bz,\tilde\by_S(t)\rangle|\le \|\tilde\by_S(t)\|^2_\bLf\|\bz\|_\bHo.
\]
Then, we have $\|(\tilde\by_S \cdot \nabla)\tilde\by_S\|_{L^{\frac{r}{2}}(I;\bHmo)} \le \|\tilde\by_S\|^2_{L^r(I;\bLf)}$. In a similar way we get that $\|(\tilde\by \cdot \nabla)\tilde\by\|_{L^{\frac{r}{2}}(I;\bHmo)} \le \|\tilde\by\|^2_{L^r(I;\bLf)}$ and $\|(\tilde\by \cdot \nabla)\tilde\by_S\|_{L^{\frac{r}{2}}(I;\bHmo)} = \|(\tilde\by_S \cdot \nabla)\tilde\by\|_{L^{\frac{r}{2}}(I;\bHmo)} \le \|\tilde\by\|_{L^r(I;\bLf)}\|\tilde\by_S\|_{L^r(I;\bLf)}$. All together this leads to
\[
\|\tilde\by\|_{\mathbf{W}_{\frac{r}{2},2}(0,T)} \le C_2\Big(\|\tilde\by\|_{L^r(I;\bLf)} + \|\tilde\by_S\|_{L^r(I;\bLf)}\Big)^2.
\]

{\em Step III-} We define the mapping
\begin{align*}
&\mF:\mathbf{W}_{\frac{r}{2},2}(0,T) \times L^q(I;\bWmop) \longrightarrow L^{\frac{r}{2}}(I;\bV')\\
&\mF(\by,\bu) = \frac{\partial \by}{\partial t} + A\by + (\by \cdot \bna)\by + (\by_S(\bu) \cdot \bna)\by + (\by \cdot \bna)\by_S(\bu) + (\by_S(\bu)\cdot\nabla)\by_S(\bu),
\end{align*}
where $\by_S(\bu)$ is the solution \eqref{E4.15} and $A:\bV \longrightarrow \bV'$ is given by $\langle A\by,\bz\rangle_{\bV',\bV} = a(\by,\bz)$. Using \cite[Theorem 3]{Amann2001} with $\bX_0 = \bHmo$, $\bX_1 = \bHo$, $p = \frac{r}{2}$, $s = \frac{1}{r}$, and $\theta = \frac{3}{4}$ we obtain
\begin{align*}
&\mathbf{W}_{\frac{r}{2},2}(0,T) \subset L^r(I;(\bHmo,\bHo)_{\frac{3}{4},1}) \subset L^r(I;(\bHmo,\bHo)_{\frac{3}{4},2})\\
& = L^r(I;\bH^{\frac{1}{2}}(\Omega)) \subset L^r(I;\bLf).
\end{align*}
Arguing as in {\em Step II}, it yields $(\by \cdot \bna)\by + (\by_S(\bu) \cdot \bna)\by + (\by \cdot \bna)\by_S(\bu) + (\by_S(\bu)\cdot\nabla)\by_S(\bu) \in L^{\frac{r}{2}}(I;\bV')$ for every $\by \in \mathbf{W}_{\frac{r}{2},2}(0,T)$. Consequently, $\mF$ is well defined. Furthermore, it is a $C^\infty$ function. We have that $\mF(\tilde\by,\bar\bu) = 0$. Moreover, the partial derivative
\begin{align*}
&\frac{\partial\mF}{\partial\by}(\tilde\by,\bar\bu):\mathbf{W}_{\frac{r}{2},2}(0,T) \longrightarrow L^{\frac{r}{2}}(I;\bV')\\
&\frac{\partial\mF}{\partial\by}(\tilde\by,\bar\bu)\bz = \frac{\partial \bz}{\partial t} + A\bz + (\bar\by \cdot \bna)\bz + (\bz \cdot \bna)\bar\by,
\end{align*}
where $\bar\by = \tilde\by + \tilde\by_S = \tilde\by + \by_S(\bar\bu)$, is an isomorphism. Indeed, the injectivity follows from \cite[Theorem 2.7]{Casas-Kunisch2020}. Let us prove the surjectivity. Given $\bef \in L^{\frac{r}{2}}(I;\bV')$, we take a sequence $\{\bef_k\}_{k = 1}^\infty \subset L^{\frac{r}{2}}(I;\bH)$ such that $\bef_k \to \bef$ in $L^{\frac{r}{2}}(I;\bV')$. For every $k$ we consider the equation
\[
\left\{\begin{array}{l}\displaystyle\frac{\partial \bz_k}{\partial t} + A\bz_k + (\bar\by \cdot \bna)\bz_k + (\bz_k \cdot \bna)\bar\by = \bef_k \ \text{ for a.a. } t \in I,\\\bz_k(0) = 0.\end{array}\right.
\]
Arguing as we did for equation \eqref{E3.3}, we get that $\bz_k \in \bV^{2,1}(0,T)$. Moreover, using again \cite[Theorem 2.7]{Casas-Kunisch2020}, we have the estimate analogous to \eqref{E4.18} for $k_0$ large enough:
\begin{equation}
\|\bz_k\|_\bWoT \le \hat\eta\Big(\|\bef_k\|_{L^{\frac{r}{2}}(I;\bV')}\Big) \le \hat\eta\Big(2\|\bef\|_{L^{\frac{r}{2}}(I;\bV')}\Big)\ \ \forall k \ge k_0.
\label{E4.19}
\end{equation}
Observe that $\bz_k$ satisfies the Stokes equations
\[
\frac{\partial \bz_k}{\partial t} + A\bz_k = \bg_k ,
\]
where $\bg_k =  \bef_k - (\bar\by \cdot \bna)\bz_k - (\bz_k \cdot \bna)\bar\by$. Then, using again the maximal parabolic regularity for the Stokes system we have
\[
\|\bz_k\|_{\mathbf{W}_{\frac{r}{2},2}(0,T)} \le C_3\|\bg_k\|_{L^{\frac{r}{2}}(I;\bV')} \le C_3\Big(\|\bef_k\|_{L^{\frac{r}{2}}(I;\bV')} + 2\|\bar\by\|_{L^r(I;\bLf)}\|\bz_k\|_{L^r(I;\bLf)}\Big).
\]
From \cite[Theorem 3]{Amann2001}, we know that the embedding $\mathbf{W}_{\frac{r}{2},2}(0,T) \subset L^r(I;\bLf)$ is compact. Then, we can apply Lions's Lemma with $\mathbf{W}_{\frac{r}{2},2}(0,T) \subset L^r(I;\bLf) \subset \bLqd$ to deduce the existence of a constant $C_4$ such that
\[
\|\bz_k\|_{L^r(I;\bLf)} \le \frac{1}{4C_3\|\bar\by\|_{L^r(I;\bLf)}}\|\bz_k\|_{\mathbf{W}_{\frac{r}{2},2}(0,T)} + C_4\|\bz_k\|_\bLqd.
\]
The last two inequalities and \eqref{E4.19} imply that $\{\bz_k\}_{k = 1}^\infty$ is bounded in $\mathbf{W}_{\frac{r}{2},2}(0,T)$. Then, taking a subsequence,  we have that $\bz_k \rightharpoonup \bz$ in $\mathbf{W}_{\frac{r}{2},2}(0,T)$ with $\frac{\partial\mF}{\partial\by}(\tilde\by,\bar\bu)\bz = \bef$, which proves the surjectivity. Hence, from the implicit function theorem we conclude the existence of $\bar\varepsilon > 0$ such that the statement of the lemma is fulfilled.
\end{proof}

\begin{lemma}
Assume that \eqref{E4.7} holds and let $\bar\varepsilon$ be as defined in Lemma \ref{L4.4}. Then, for every $\bu \in B_{\bar\varepsilon}(\bar\bu)$ the solution $\bvarphi_\bu$ of \eqref{E3.3} belongs to $C(\bar I;\mathbf{C}^1(\bar\Omega))$ and there exists a constant $M_3$ continuously depending on $\|\by_\bu\|_{L^r(I;\bLf)}$ such that
\begin{equation}
\|\bvarphi_\bu\|_{C(\bar I;\mathbf{C}^1(\bar\Omega))} \le M_3\|\by_\bu - \by_d\|_{L^r(I;\bLf)}.
\label{E4.20}
\end{equation}
\label{L4.5}
\end{lemma}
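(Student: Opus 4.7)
The plan is to upgrade the $\bV^{2,1}(0,T)$ regularity of $\bvarphi_\bu$ (already established in the proof of Theorem \ref{T3.2}) to $C(\bar I;\mathbf{C}^1(\bar\Omega))$ by combining maximal parabolic regularity for the backward Stokes system in an $L^r(\bLf)$ framework with a trace/Sobolev embedding that is sharp for $r>4$ in dimension two. To this end, I rewrite \eqref{E3.3} as a Stokes problem with forcing
\[
\bh_\bu := \by_\bu - \by_d + (\by_\bu \cdot \bna)\bvarphi_\bu + (\bna\bvarphi_\bu)^T \by_\bu .
\]
By assumption \eqref{E4.7} and Lemma \ref{L4.4}, $\by_\bu,\by_d \in L^r(I;\bLf)$ with $r>4$, so the first two summands are already in $L^r(I;\bLf)$.

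The key embedding is the following: setting
\[
\mathcal{X}_r := \{\bvarphi \in L^r(I;\mathbf{W}^{2,4}(\Omega)\cap\mathbf{W}^{1,4}_0(\Omega)) : \partial_t\bvarphi \in L^r(I;\bLf),\ \div\bvarphi=0\},
\]
trace theory (as used e.g. in \cite[Chap.~III/4.10.2]{Amann1995}) gives $\mathcal{X}_r \subset C(\bar I;(\bLf,\mathbf{W}^{2,4})_{1-1/r,r})$, with trace space isomorphic to the Besov space $\mathbf{B}^{2(1-1/r)}_{4,r}(\Omega)$. In two dimensions the Sobolev condition $2(1-1/r) - 2/4 > 1$ is equivalent to $r>4$ and yields the continuous embedding $\mathbf{B}^{2(1-1/r)}_{4,r}(\Omega) \hookrightarrow \mathbf{C}^1(\bar\Omega)$. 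Hence $\mathcal{X}_r \hookrightarrow C(\bar I;\mathbf{C}^1(\bar\Omega))$, and it suffices to show $\bvarphi_\bu \in \mathcal{X}_r$ with a quantitative bound.

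Once we know $\bvarphi_\bu \in \mathcal{X}_r$, the coupling term is easy to control since $\bna\bvarphi_\bu \in L^\infty(Q)$:
\[
\|(\by_\bu\cdot\bna)\bvarphi_\bu\|_{L^r(\bLf)} + \|(\bna\bvarphi_\bu)^T\by_\bu\|_{L^r(\bLf)} \le 2\,\|\by_\bu\|_{L^r(\bLf)}\,\|\bvarphi_\bu\|_{C(\bar I;\mathbf{C}^1(\bar\Omega))}.
\]
Applying maximal $L^r(\bLf)$ regularity for the backward Stokes system on the $C^3$ domain $\Omega$ to \eqref{E3.3} rewritten in Stokes form with forcing $\bh_\bu$, and absorbing the coupling term on a sufficiently short time interval $[T-\delta,T]$ (where $\delta$ depends continuously on $\|\by_\bu\|_{L^r(\bLf)}$), yields the estimate on $[T-\delta,T]$; iterating backwards in time over finitely many subintervals then gives \eqref{E4.20} globally, with $M_3$ continuously depending on $\|\by_\bu\|_{L^r(\bLf)}$.

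The main obstacle is the first step: establishing $\bvarphi_\bu \in \mathcal{X}_r$ from the mere $\bV^{2,1}(0,T)$ regularity. This is because the $\bV^{2,1}$-regularity only gives $\bna\bvarphi_\bu \in L^2(I;\mathbf{L}^p)$ for every $p<\infty$ (not in $L^\infty(Q)$), so the coupling term is only a priori in $L^{\sigma}(I;\mathbf{L}^{s})$ with $\sigma<r$. I would handle this by a finite bootstrap: Gagliardo-Nirenberg together with the $L^\infty(I;\bV)\cap L^2(I;\mathbf{H}^2)$ information yields $\bna\bvarphi_\bu \in L^{\sigma_0}(I;\mathbf{L}^{s_0})$ with $s_0 > 2$, hence in $L^{\sigma_0}(I;\mathbf{L}^\infty)$ by the 2D embedding $\mathbf{W}^{1,s_0}\hookrightarrow\mathbf{L}^\infty$; applying $L^{\sigma_0}(\mathbf{L}^{s_0})$ Stokes maximal regularity improves $\bvarphi_\bu$ to $L^{\sigma_1}(\mathbf{W}^{2,s_1})$ with $\sigma_1>\sigma_0$ and $s_1>s_0$; iterating finitely many times reaches the target space $\mathcal{X}_r$. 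Tracking the constants at each step, each of which depends continuously on $\|\by_\bu\|_{L^r(\bLf)}$, produces $M_3$ with the claimed continuous dependence.
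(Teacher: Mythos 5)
Your overall strategy is sound and it does reach the stated conclusion, but it is genuinely different from the paper's argument, so let me compare. The paper never bootstraps the known solution; instead it sets up the operator $L_t(\bvarphi,\pi) = -\partial_t\bvarphi - \nu\Delta\bvarphi - t[(\by_\bu\cdot\bna)\bvarphi + (\bna\bvarphi)^T\by_\bu] + \nabla\pi$ acting on $\bX\times\Pi$ (essentially your $\mathcal{X}_r$ plus a pressure space) with values in $L^r(I;\bLf)$, and runs a continuity method: $E=\{t\in[0,1]: L_t \text{ is an isomorphism}\}$ contains $0$ by Stokes maximal regularity, is open by perturbation of isomorphisms, and is closed via an a priori bound in which the coupling term is absorbed \emph{globally in time} using Lions's lemma (the Ehrling inequality for the compact embedding $\bX\subset C(\bar I;\mathbf{C}^1(\bar\Omega))\subset\bLqd$) together with the uniform $\bV^{2,1}(0,T)$ bound from Theorem \ref{T3.2}; injectivity of $L_1$ in the larger class then identifies the $\bX$-solution with $\bvarphi_\bu$. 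This buys two things your route has to work for: it avoids the exponent-chasing of a finite bootstrap, and it replaces your time-splitting/iteration over subintervals (where you would have to handle nonzero terminal traces on each subinterval) by a single global absorption. Your route, in exchange, is more self-contained and makes the mechanism of the regularity gain explicit. The embedding you use, $\mathcal{X}_r\hookrightarrow C(\bar I;\mathbf{B}^{2(1-1/r)}_{4,r}(\Omega))\hookrightarrow C(\bar I;\mathbf{C}^1(\bar\Omega))$ for $r>4$, is correct and is the same fact the paper obtains from Amann's interpolation theorem.

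One step of your bootstrap is misstated, though the scheme is repairable: from $\bna\bvarphi_\bu\in L^{\sigma_0}(I;\mathbf{L}^{s_0})$ with $s_0>2$ you cannot conclude $\bna\bvarphi_\bu\in L^{\sigma_0}(I;\mathbf{L}^\infty)$ via $\mathbf{W}^{1,s_0}\hookrightarrow\mathbf{L}^\infty$ --- that would require $\bvarphi_\bu\in L^{\sigma_0}(I;\mathbf{W}^{2,s_0})$, which is exactly what the next maximal-regularity step is supposed to produce, not an input to it. The correct accounting is: if $\bna\bvarphi_\bu\in L^{a}(I;\mathbf{L}^{b})$, the coupling term lies in $L^{\alpha}(I;\mathbf{L}^{\beta})$ with $1/\alpha=1/r+1/a$, $1/\beta=1/4+1/b$; maximal regularity in these exponents and the trace embedding give $\bna\bvarphi_\bu\in L^\infty(I;\mathbf{L}^{b'})$ with $1/b'=1/b+1/a-(1/4-1/r)$. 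Starting from $\bna\bvarphi_\bu\in L^4(I;\bLf)$ (Gagliardo--Nirenberg from the $\bV^{2,1}$ regularity), each iteration decreases $1/b$ by the fixed amount $1/4-1/r>0$, so finitely many steps reach $\bna\bvarphi_\bu\in L^\infty(Q)$ and one last step in $L^r(I;\bLf)$ lands in $\mathcal{X}_r$. Since each constant depends continuously on $\|\by_\bu\|_{L^r(I;\bLf)}$ and the chain is linear in $\by_\bu-\by_d$ (seeded by \eqref{E3.12}), the claimed form of $M_3$ and of \eqref{E4.20} follows. With that correction your proof is complete, if considerably longer than the paper's.
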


\begin{proof}
Let us consider the spaces
\begin{align*}
& \bX = \{\by \in L^r(I;\mathbf{W}^{2,4}(\Omega)) \cap W^{1,r}(I;\bLf) : \by = 0 \text{ on } \Sigma \text{ and } \div\by = 0 \text{ in } Q\},\\
&\Pi = \{\pi \in L^r(I;W^{1,4}(\Omega)) : \int_\Omega\pi(t) \, dx = 0 \text{ for a.a. } t \in I\}.
\end{align*}
Applying \cite[Theorem 3]{Amann2001} with $\bX_0 = \bLf$, $\bX_1 = \mathbf{W}^{2,4}(\Omega)$, $p = r$, $\frac{1}{r} < s < \frac{1}{4}$, and $\frac{3}{4} < \theta < 1-s$, we obtain that
\begin{align*}
&\bX \subset C^{0,s - \frac{1}{r}}(\bar I;(\bX_0,\bX_1)_{\theta,1}) \subset C^{0,s - \frac{1}{r}}(\bar I;(\bX_0,\bX_1)_{\theta,2})\\
& = C^{0,s - \frac{1}{r}}(\bar I;\mathbf{W}^{2\theta,4}(\Omega)) \subset C(\bar I;\mathbf{C}^1(\bar\Omega)),
\end{align*}
the embedding $\bX \subset C(\bar I;\mathbf{C}^1(\bar\Omega))$ being compact. We point out that the lower bound $\frac{3}{4} < \theta$  is used to guarantee the continuous embedding $\mathbf{W}^{2\theta,4}(\Omega) \subset  \mathbf{C}^1(\bar\Omega)$.

Now, for every $t \in [0,1]$ and $\bu \in B_{\bar\varepsilon}(\bar\bu)$ we define the linear operators:
\begin{align*}
&L_t:\bX \times \Pi \longrightarrow L^r(I;\bLf),\\
&L_t(\bvarphi,\pi) = -\frac{\partial\bvarphi}{\partial t} - \nu\Delta\bvarphi - t[(\by_\bu \cdot \bna)\bvarphi + (\bna\bvarphi)^T\by_\bu] + \nabla \pi.
\end{align*}
Using the embedding $\bX \subset C(\bar I;\mathbf{C}^1(\bar\Omega))$ and the regularity $\by_\bu \in L^r(I;\bLf)$ established in Lemma \ref{L4.4}, it is obvious that $L_t$ is linear and continuous. Moreover the inyectivity of $L_t$ follows from Theorem \ref{T3.2}. Put $E = \{t \in [0,1] : L_t \text{ is an isomorphism}\}$. The maximal parabolic regularity property of the Stokes system implies that $0 \in E$. Moreover, $E$ is a relatively open set in $[0,1]$. Indeed, if $t_0 \in E$ and $t \in [0,1]$ with $|t - t_0| < \varepsilon$ we have
\begin{align*}
&\|L_t(\bvarphi,\pi) - L_{t_0}(\bvarphi,\pi)\|_{L^r(I;\bLf)} = |t - t_0|\|(\by_\bu \cdot \bna)\bvarphi + (\bna\bvarphi)^T\by_\bu\|_{L^r(I;\bLf)}\\
& \le C_1\varepsilon\|\by_u\|_{L^r(I;\bLf)}\|\bvarphi\|_\bX,
\end{align*}
therefore  $\|L_t - L_{t_0}\|_{\mL(\bX \times \Pi,L^r(I,\bLf)} \le C\varepsilon\|\by_u\|_{L^r(I;\bLf)}$. Since the set of isomorphisms is an open set, we have that $L_t$ is an isomorphism if $\varepsilon$ is small enough. Now, we prove that $E$ is closed. Take a sequence $\{t_k\}_{k = 1}^\infty \subset E$ such that $t_k \to t$. It is enough to prove that $L_t$ is surjective to conclude that $t \in E$. Given an arbitrary element $\bef \in L^r(I;\bLf)$, we introduce the sequence $\{(\bvarphi_k,\pi_k)\}_{k = 1}^\infty \subset \bX \times \Pi$ such that $L_{t_k}(\bvarphi_k,\pi_k) = \bef$. Using the well known estimates for the Stokes system we have
\begin{align*}
&\|(\bvarphi_k,\pi_k)\|_{\bX\times\Pi} \le C_2\|\bef + t_k[(\by_\bu \cdot \bna)\bvarphi_k + (\bna\bvarphi_k)^T\by_\bu]\|_{L^r(I;\bLf)}\\
&\le C_2\left(\|\bef\|_{L^r(I;\bLf)} + 2\|\by_\bu\|_{L^r(I;\bLf)}\|\bvarphi_k\|_{C(\bar I;\mathbf{C}^1(\bar\Omega))}\right).
\end{align*}
Using again Lions's Lemma with the spaces $X \subset C(I;\mathbf{C}^1(\bar\Omega)) \subset \bLqd$ we deduce the existence of a constant  $C_3$ such that
\[
\|(\bvarphi_k,\pi_k)\|_{\bX\times\Pi} \le C_2(\left(\|\bef\|_{L^r(I;\bLf)} + C_3\|\by_\bu\|_{L^r(I;\bLf)}\|\bvarphi_k\|_\bLqd\right) + \frac{1}{2}\|\bvarphi_k\|_\bX,
\]
which proves the boundedness of $\{(\bvarphi_h,\pi_k)\}_{k = 1}^\infty$ in $\bX \times \Pi$. Indeed, the boundedness of $\{\bvarphi_k\}_{k = 1}^\infty$ in $\bLqd$, actually in $\bV^{2,1}(0,T)$, follows from Theorem \ref{T3.2}. Finally, it is straightforward to pass to the limit in $k$ and to conclude that $(\bvarphi_k,\pi_k) \rightharpoonup (\bvarphi,\pi)$ in $\bX \times \Pi$ with $L_t(\bvarphi,\pi) = \bef$. Hence, $L_t$ is also an isomorphism. Since $E$ is nonempty, open, and closed, we conclude that $E= [0,1]$ and, consequently, $\bvarphi_\bu \in \bX$. The estimate \eqref{E4.20} follows from the above estimates.
\end{proof}

\begin{lemma}
Assume that \eqref{E4.7} is fulfilled and let $\bar\varepsilon$ be as introduced in Lemma \ref{L4.4}. Then, for every $\bu \in B_{\bar\varepsilon}(\bar\bu)$ the inequality
\begin{equation}
\|\bvarphi_\bu - \bar\bvarphi\|_{C(\bar I;\mathbf{C}^1(\bar\Omega))} \le M_3\Big(1 + 2M_3\|\by_\bu - \by_d\|_{L^r(I;\bLf)}\Big)\|\by_\bu - \bar\by\|_{L^r(I;\bLf)}
\label{E4.21}
\end{equation}
holds with $M_3$ given by Lemma \ref{L4.5}.
\label{L4.6}
\end{lemma}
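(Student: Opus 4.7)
I would argue by linearity, writing the equation for the difference $\be := \bvarphi_\bu - \bar\bvarphi$ and then invoking the isomorphism property of the adjoint Stokes-type operator that was established in the proof of Lemma~\ref{L4.5}.

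Subtracting the adjoint equation~\eqref{E3.3} for $\bvarphi_\bu$ from the same equation for $\bar\bvarphi$, and rearranging the convective terms so that the bilinear part involves the reference state $\bar\by$ on the left (via the identities $(\by_\bu\!\cdot\!\bna)\bvarphi_\bu-(\bar\by\!\cdot\!\bna)\bar\bvarphi=(\bar\by\!\cdot\!\bna)\be+((\by_\bu-\bar\by)\!\cdot\!\bna)\bvarphi_\bu$ and the analogous identity for $(\bna\bvarphi)^T\by$), I would obtain the system
\[
\left\{\begin{array}{l}\displaystyle-\frac{\partial\be}{\partial t} - \nu\Delta\be - (\bar\by \cdot \bna)\be - (\bna\be)^T\bar\by + \nabla(\pi_\bu-\bar\pi) = \bg \ \text{ in } Q,\\[1.2ex]\div\be = 0 \ \text{ in } Q, \ \be = 0 \ \text{ on } \Sigma,\ \be(T) = 0 \ \text{ in } \Omega,\end{array}\right.
\]
where
\[
\bg = (\by_\bu - \bar\by) + ((\by_\bu - \bar\by) \cdot \bna)\bvarphi_\bu + (\bna\bvarphi_\bu)^T(\by_\bu - \bar\by).
\]

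The proof of Lemma~\ref{L4.5} establishes, for every $\bu\in B_{\bar\varepsilon}(\bar\bu)$, that the adjoint operator $L_1$ associated with state $\by_\bu$ is an isomorphism from $\bX\times\Pi$ onto $L^r(I;\bLf)$ with an inverse of operator norm bounded by $M_3$ (depending continuously on $\|\by_\bu\|_{L^r(I;\bLf)}$). The same argument applied with $\by_\bu$ replaced by $\bar\by$ yields
\[
\|\be\|_{C(\bar I;\mathbf{C}^1(\bar\Omega))} \le M_3\,\|\bg\|_{L^r(I;\bLf)},
\]
so it remains to estimate $\|\bg\|_{L^r(I;\bLf)}$ in terms of $\|\by_\bu-\bar\by\|_{L^r(I;\bLf)}$.

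For the two convective summands I would use the pointwise bound $|(\bw\!\cdot\!\bna)\bvarphi_\bu|+|(\bna\bvarphi_\bu)^T\bw|\le 2\|\bvarphi_\bu\|_{C(\bar I;\mathbf{C}^1(\bar\Omega))}\,|\bw(\cdot,t)|$ with $\bw=\by_\bu-\bar\by$, which gives
\[
\|\bg\|_{L^r(I;\bLf)}\le \bigl(1 + 2\|\bvarphi_\bu\|_{C(\bar I;\mathbf{C}^1(\bar\Omega))}\bigr)\|\by_\bu - \bar\by\|_{L^r(I;\bLf)}.
\]
Inserting the bound $\|\bvarphi_\bu\|_{C(\bar I;\mathbf{C}^1(\bar\Omega))}\le M_3\|\by_\bu-\by_d\|_{L^r(I;\bLf)}$ from Lemma~\ref{L4.5} yields exactly~\eqref{E4.21}. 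The only delicate point is verifying that a single constant $M_3$ controls both the bound on $\bvarphi_\bu$ and the inverse of the adjoint operator with state $\bar\by$; this follows from the continuous dependence of $M_3$ on the $L^r(I;\bLf)$-norm of the state together with Lemma~\ref{L4.4}, which ensures that $\|\by_\bu\|_{L^r(I;\bLf)}$ stays uniformly bounded (and close to $\|\bar\by\|_{L^r(I;\bLf)}$) on $B_{\bar\varepsilon}(\bar\bu)$, so we may enlarge $M_3$ to accommodate both occurrences.
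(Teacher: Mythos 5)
Your proposal is correct and follows essentially the same route as the paper: subtract the two adjoint equations, rearrange the convective terms so the linearized operator carries the state $\bar\by$, bound the resulting right-hand side pointwise by $\bigl(1+2\|\bvarphi_\bu\|_{C(\bar I;\mathbf{C}^1(\bar\Omega))}\bigr)|\by_\bu-\bar\by|$, and invoke the isomorphism/estimate from Lemma~\ref{L4.5} together with its bound on $\|\bvarphi_\bu\|_{C(\bar I;\mathbf{C}^1(\bar\Omega))}$. Your explicit remark on why a single constant $M_3$ serves both purposes (continuous dependence on the $L^r(I;\bLf)$-norm of the state plus Lemma~\ref{L4.4}) is a point the paper leaves implicit, and it is handled correctly.
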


\begin{proof}
Taking $(\be,\pi) = (\bvarphi_\bu - \bar\bvarphi,\pi_\bu - \bar\pi)$ and subtracting the corresponding equations we get
\[
\displaystyle-\frac{\partial\be}{\partial t} - \nu\Delta\be - (\bar\by \cdot \bna)\be - (\bna\be)^T\bar\by + \nabla \pi = \by_\bu - \bar\by + [(\by_\bu - \bar\by)\cdot\nabla]\bvarphi_\bu + (\nabla\bvarphi_\bu)^T(\by_\bu - \bar\by) \ \text{ in } Q.
\]
Then, applying Lemma \ref{L4.5} we get
\begin{align*}
&\|\bvarphi_\bu - \bar\bvarphi\|_{C(\bar I;\mathbf{C}^1(\bar\Omega))} \le M_3\|\by_\bu - \bar\by + [(\by_\bu - \bar\by)\cdot\nabla]\bvarphi_\bu + (\nabla\bvarphi_\bu)^T(\by_\bu - \bar\by)\|_{L^r(I;\bLf)}\\
&\le M_3\Big(1 + 2\|\bvarphi_\bu\|_{C(\bar I;\mathbf{C}^1(\bar\Omega))}\Big)\|\by_\bu - \bar\by\|_{L^r(I;\bLf)}\\
& \le M_3\Big(1 + 2M_3\|\by_\bu - \by_d\|_{L^r(I;\bLf)}\Big)\|\by_\bu - \bar\by\|_{L^r(I;\bLf)}.
\end{align*}
\end{proof}

\begin{lemma}
Assume that \eqref{E4.7} holds. Then, for every $\rho > 0$ there exists $\varepsilon_\rho > 0$ such that
\begin{equation}
|[J''(\bu) - J''(\bar\bu)](\bu - \bar\bu)^2| \le \rho\|\bz_{\bu - \bar\bu}\|^2_\bLqd\quad \forall \bu \in \uad \cap \bar B_{\varepsilon_\rho}(\bar\bu),
\label{E4.22}
\end{equation}
where $\bar B_{\varepsilon_\rho}(\bar\bu) = \{\bu \in \uad : \|\bu - \bar\bu\|_{L^q(I;\bWmop)} \le \varepsilon_\rho\}$.
\label{L4.7}
\end{lemma}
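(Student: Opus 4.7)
The plan is to start from the representation \eqref{E3.2} and write
\begin{align*}
[J''(\bu) - J''(\bar\bu)](\bu - \bar\bu)^2
&= \int_Q\bigl\{|\bz_{\bu,\bu-\bar\bu}|^2 - |\bz_{\bu-\bar\bu}|^2\bigr\}\, dx\, dt\\
&\quad + 2\int_Q\bigl\{(\bz_{\bu,\bu-\bar\bu}\cdot\bna)\bvarphi_\bu\,\bz_{\bu,\bu-\bar\bu} - (\bz_{\bu-\bar\bu}\cdot\bna)\bar\bvarphi\,\bz_{\bu-\bar\bu}\bigr\}\, dx\, dt,
\end{align*}
with the notation $\bz_{\bu,\bu-\bar\bu} = G'(\bu)(\bu-\bar\bu)$ and $\bz_{\bu-\bar\bu} = G'(\bar\bu)(\bu-\bar\bu)$ used in Lemma \ref{L4.2}. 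The goal is to bound both integrals by a small multiple of $\|\bz_{\bu-\bar\bu}\|_\bLqd^2$, the smallness being controlled by $\|\bu - \bar\bu\|_{L^q(I;\bWmop)}$.

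For the first integral I would factor $|\bz_{\bu,\bu-\bar\bu}|^2 - |\bz_{\bu-\bar\bu}|^2 = (\bz_{\bu,\bu-\bar\bu}-\bz_{\bu-\bar\bu})\cdot(\bz_{\bu,\bu-\bar\bu}+\bz_{\bu-\bar\bu})$, apply Cauchy--Schwarz in $\bLqd$, and invoke \eqref{E4.11} and \eqref{E4.12} to obtain the bound $M_1(M_2+1)\|\bu - \bar\bu\|_{L^q(I;\bWmop)}\|\bz_{\bu-\bar\bu}\|_\bLqd^2$. For the second integral I would telescope through $(\bz_{\bu,\bu-\bar\bu}\cdot\bna)\bar\bvarphi\,\bz_{\bu,\bu-\bar\bu}$, splitting the integrand as
\begin{align*}
&(\bz_{\bu,\bu-\bar\bu}\cdot\bna)(\bvarphi_\bu-\bar\bvarphi)\bz_{\bu,\bu-\bar\bu}\\
&\quad + ((\bz_{\bu,\bu-\bar\bu}-\bz_{\bu-\bar\bu})\cdot\bna)\bar\bvarphi\,\bz_{\bu,\bu-\bar\bu} + (\bz_{\bu-\bar\bu}\cdot\bna)\bar\bvarphi\,(\bz_{\bu,\bu-\bar\bu}-\bz_{\bu-\bar\bu}).
\end{align*}
The two telescoping residuals are bounded by $\|\bar\bvarphi\|_{C(\bar I;\mathbf{C}^1(\bar\Omega))}$ (finite by Lemma \ref{L4.5}) times the $\bLqd$-inner product of $\bz_{\bu,\bu-\bar\bu}-\bz_{\bu-\bar\bu}$ with $\bz_{\bu,\bu-\bar\bu}$ or $\bz_{\bu-\bar\bu}$, which by \eqref{E4.11}--\eqref{E4.12} is $O(\|\bu - \bar\bu\|_{L^q(I;\bWmop)})\|\bz_{\bu-\bar\bu}\|_\bLqd^2$.

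The hard part is the first telescoping piece, which is controlled by
\[
\|\bvarphi_\bu-\bar\bvarphi\|_{C(\bar I;\mathbf{C}^1(\bar\Omega))}\|\bz_{\bu,\bu-\bar\bu}\|_\bLqd^2 \le M_2^2\|\bvarphi_\bu-\bar\bvarphi\|_{C(\bar I;\mathbf{C}^1(\bar\Omega))}\|\bz_{\bu-\bar\bu}\|_\bLqd^2
\]
after using \eqref{E4.12}. Here I would apply Lemma \ref{L4.6} to bound $\|\bvarphi_\bu-\bar\bvarphi\|_{C(\bar I;\mathbf{C}^1(\bar\Omega))}$ by a constant times $\|\by_\bu - \bar\by\|_{L^r(I;\bLf)}$, and then use Lemma \ref{L4.4} to conclude that the latter quantity tends to zero as $\|\bu-\bar\bu\|_{L^q(I;\bWmop)}\to 0$. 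Since the statement of Lemma \ref{L4.4} is sequential but the underlying topology is metric, a short contradiction argument converts this into the uniform quantitative bound: for every $\eta > 0$ there is $\delta > 0$ such that $\bu \in \uad \cap B_\delta(\bar\bu)$ implies $\|\by_\bu - \bar\by\|_{L^r(I;\bLf)} \le \eta$.

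Combining the three contributions yields an estimate of the form
\[
|[J''(\bu)-J''(\bar\bu)](\bu-\bar\bu)^2| \le \bigl(C_\gamma\|\bu-\bar\bu\|_{L^q(I;\bWmop)} + C'_\gamma \|\by_\bu-\bar\by\|_{L^r(I;\bLf)}\bigr)\|\bz_{\bu-\bar\bu}\|_\bLqd^2,
\]
where $C_\gamma$ absorbs $M_1$, $M_2$, and $\|\bar\bvarphi\|_{C(\bar I;\mathbf{C}^1(\bar\Omega))}$, and $C'_\gamma$ comes from Lemma \ref{L4.6} (which is uniform over $\uad \cap B_{\bar\varepsilon}(\bar\bu)$ since $\|\by_\bu-\by_d\|_{L^r(I;\bLf)}$ is uniformly bounded there, by Lemma \ref{L4.4}). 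Given $\rho > 0$, one then chooses $\varepsilon_\rho \le \bar\varepsilon$ small enough that $C_\gamma\varepsilon_\rho + C'_\gamma\sup\{\|\by_\bu - \bar\by\|_{L^r(I;\bLf)} : \bu \in \uad \cap \bar B_{\varepsilon_\rho}(\bar\bu)\} \le \rho$, which is possible by the above continuity argument, and \eqref{E4.22} follows.
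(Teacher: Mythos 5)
Your proposal is correct and follows essentially the same route as the paper: the same splitting into the quadratic difference (handled via \eqref{E4.11}--\eqref{E4.12}) and a telescoped trilinear difference (handled via Lemmas \ref{L4.2}, \ref{L4.5} and \ref{L4.6}), with the final smallness supplied by Lemma \ref{L4.4}. The only cosmetic differences are the order of the telescoping (you swap $\bvarphi_\bu$ for $\bar\bvarphi$ first, whereas the paper swaps $\bz_{\bu,\bu-\bar\bu}$ for $\bz_{\bu-\bar\bu}$ first) and that you are more explicit than the paper about converting the sequential convergence statement of Lemma \ref{L4.4} into the uniform bound needed to choose $\varepsilon_\rho$.
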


\begin{proof}
Let $\bar\varepsilon$ be as defined in Lemma \ref{L4.4} and take $\bu \in \uad \cap B_{\bar\varepsilon}(\bar\bu)$. Let us set $\bv = \bu - \bar\bu$, $\bz_{\bu,\bv} = G'(\bu)\bv$, and $\bz_\bv = G'(\bar\bu)\bv$. According to \eqref{E3.2} we have
\begin{align*}
&|[J''(\bu) - J''(\bar\bu)]\bv^2|\\
& = \left|\int_Q[|\bz_{\bu,\bv}|^2 - 2(\bz_{\bu,\bv}\cdot\bna)\bz_{\bu,\bv}\bvarphi_\bu]\, dx\, dt - \int_Q[|\bz_\bv|^2 - 2(\bz_\bv\cdot\bna)\bz_\bv\bar\bvarphi]\, dx\, dt\right|\\
&\le\int_Q|\bz_{\bu,\bv} + \bz_\bv|\, |\bz_{\bu,\bv} - \bz_\bv|\, dx\, dt +2\left|\int_Q[(\bz_{\bu,\bv} - \bz_\bv)\cdot \bna]\bvarphi_\bu\bz_{\bu,\bv}\, dx\, dt\right|\\
&+ 2\left|\int_Q(\bz_\bv\cdot\bna )(\bvarphi_\bu - \bar\bvarphi)\bz_{\bu,\bv}\, dx\, dt\right| + 2\left|\int_Q(\bz_\bv\cdot\bna )\bar\bvarphi(\bz_{\bu,\bv} - \bz_\bv)\, dx\, dt\right|.
\end{align*}
We estimate the last four integrals. For the first one we use Lemma \ref{L4.2} as follows
\begin{align}
&\int_Q|\bz_{\bu,\bv} + \bz_\bv|\, |\bz_{\bu,\bv} - \bz_\bv|\, dx\, dt \le \|\bz_{\bu,\bv} + \bz_\bv\|_\bLqd\|\bz_{\bu,\bv} - \bz_\bv\|_\bLqd\nonumber\\
&\le 2M_2M_1\|\bu - \bar\bu\|_{L^q(I;\bWmop)}\|\bz_\bv\|^2_\bLqd.
\label{E4.23}
\end{align}

For the second integral we use Lemmas \ref{L4.2} and \ref{L4.5} to get
\begin{align}
&\left|\int_Q[(\bz_{\bu,\bv} - \bz_\bv)\cdot \bna]\bvarphi_\bu\bz_{\bu,\bv}\, dx\, dt\right|\nonumber\\
& \le \|\bvarphi_\bu\|_{C(I;\mathbf{C}^1(\bar\Omega))}\|\bz_{\bu,\bv} - \bz_\bv\|_\bLqd\|\bz_\bv\|_\bLqd\nonumber\\
&\le M_1M_3\|\bu - \bar\bu\|_{L^q(I;\bWmop)}\|\by_\bu - \by_d\|_{L^r(I;\bLf)}\|\bz_\bv\|^2_\bLqd.
\label{E4.24}
\end{align}

The third integral is estimated with Lemmas \ref{L4.2} and \ref{L4.6} as follows
\begin{align}
&\left|\int_Q(\bz_\bv\cdot\bna )(\bvarphi_\bu - \bar\bvarphi)\bz_{\bu,\bv}\, dx\, dt\right| \le \|\bvarphi_\bu - \bar\bvarphi\|_{C(\bar I;\mathbf{C}^1(\bar\Omega))}M_2\|\bz_\bv\|^2_\bLqd\nonumber\\
&\le M_3\Big(1 + 2M_3\|\by_\bu - \bar\by\|_{L^r(I;\bLf)}\Big)\|\by_\bu - \bar\by\|_{L^r(I;\bLf)}M_2\|\bz_\bv\|^2_\bLqd.
\label{E4.25}
\end{align}

The estimate \eqref{E4.24} is also valid for the fourth integral just changing $\by_\bu$ by $\bar\by$. Finally, the existence of $\varepsilon_\rho$ such that \eqref{E4.22} holds is an immediate consequence of the above estimates and Lemma \ref{L4.4}.
\end{proof}

{\em Proof of Theorem \ref{T4.2}.} Using that $G_0'(\bar u):L^q(I;\bWmop) \longrightarrow \mY$ is a linear continuous operator we get
\begin{equation}
\|\bz_{\bu - \bar\bu}\|_\bLqd \le C_\Omega\|\bz_{\bu - \bar\bu}\|_\mY \le C_\Omega\|G_0'(\bar\bu)\|\|\bu - \bar\bu\|_{L^q(I;\bWmop)}.
\label{E4.26}
\end{equation}
From Lemmas \ref{L4.4} and \ref{L4.5}, \eqref{E4.12}, and \eqref{E4.26} we deduce the existence of a constant $M$ such that for every $\bu \in \uad \cap \bar B_{\frac{\bar\varepsilon}{2}}(\bar\bu)$ we have
\begin{align}
&|J''(\bu)(\bu - \bar\bu)^2| \le \Big(1+ 2\|\bvarphi_\bu\|_{C(\bar I;\mathbf{C}^1(\bar\Omega))}\Big)\|\bz_{\bu,\bu - \bar\bu}\|^2_\bLqd\nonumber\\
&\le M\|\bu - \bar\bu\|_{L^q(I;\bWmop)}\|\bz_{\bu - \bar\bu}\|_\bLqd\ \ \forall \bu \in \uad.\label{E4.27}
\end{align}
From Lemma \ref{L4.7} we obtain the existence of $\varepsilon_\delta > 0$ such that
\begin{equation}
|[J''(\bu) - J''(\bar\bu)](\bu - \bar\bu)^2| \le \frac{\delta}{2}\|\bz_{\bu - \bar\bu}\|^2_\bLqd\quad \forall \bu \in \uad \cap \bar B_{\varepsilon_\delta}(\bar\bu),
\label{E4.28}
\end{equation}
where $\delta$ is given in \eqref{E4.8}. We take
\[
\varepsilon = \min\Big\{\frac{\bar\varepsilon}{2},\varepsilon_0,\varepsilon_\delta,\frac{\tau}{2M},\frac{1}{C_\Omega\|G_0'(\bar\bu)\|}\Big\}\ \text{ and } \ \kappa = \min\Big\{\frac{\tau}{4},\frac{\delta}{8}\Big\},
\]
where $\varepsilon_0$ is given in Lemma \ref{L4.3}. Now, we prove the inequality \eqref{E4.9}. To this end, we take $\bu \in \bar B_\varepsilon(\bar\bu) \cap \uad$ and distinguish two cases.

{\em Case I: $\bu - \bar\bu \not\in C^\tau_{\bar\bu}$.} At first we note that if $\|\bar u_i(t)\|_\mo = \gamma$, taking into account that $\bar\bu + \rho(\bu - \bar\bu) \in \uad$ for every $\rho \in (0,1)$, we have
\[
j'(\bar u_i(t);u_i(t) - \bar u_i(t)) = \lim_{\rho \searrow 0}\frac{j(\bar u_i(t) + \rho(u_i(t) - \bar u_i(t))) - \gamma}{\rho} \le 0 \text{ \rm for } i=1, 2.
\]
Therefore, if $\bu - \bar\bu \not\in C^\tau_{\bar\bu}$, then one (or both) of the two conditions holds
\begin{align}
&\text{\rm I)}\ \frac{\partial\mL}{\partial\bu}(\bar\bu,\bar\bpsi)(\bu - \bar\bu) > \tau\|\bz_{\bu - \bar\bu}\|_\bLqd,\label{E4.29}\\
&\text{\rm II)} -\tau\|\bz_{\bu-\bar\bu}\|_\bLqd > \sum_{i = 1}^2\int_0^T\|\bar\varphi_i(t)\|_{C_0(\omega)} j'(\bar u_i(t);v_i(t))\, dt.\label{E4.30}
\end{align}

If \eqref{E4.29} holds, then performing a Taylor expansion of $J$ around $\bar\bu$, using the convexity of $j$, \eqref{E3.14} and $\|\bu(t)\|_\bM \le \gamma$, \eqref{E4.26}, \eqref{E4.27}, and taking into account the definitions of $\varepsilon$ and $\kappa$, we get for some $\theta \in [0,1]$
\begin{align*}
&J(\bu) - J(\bar\bu) \ge \mL(\bu,\bar\bpsi) - \mL(\bar\bu,\bar\bpsi) \ge \frac{\partial\mL}{\partial\bu}(\bar\bu,\bar\bpsi)(\bu - \bar\bu) + \frac{1}{2}J''(\bar\bu + \theta(\bu - \bar\bu))(\bu - \bar\bu)^2\\
&\ge\tau\|\bz_{\bu - \bar\bu}\|_\bLqd -\frac{\tau}{2}\|\bz_{\bu - \bar\bu}\|_\bLqd = \frac{\tau}{2}\|\bz_{\bu - \bar\bu}\|_\bLqd\ge \frac{\tau}{2}\|\bz_{\bu - \bar\bu}\|_\bLqd^2\\
& \ge \frac{\tau}{8}\|\by_\bu - \bar\by\|^2_\bLqd\ge \frac{\kappa}{2}\|\by_\bu - \bar\by\|^2_\bLqd.
\end{align*}

If \eqref{E4.30} holds, then we obtain $J'(\bar\bu)(\bu - \bar\bu) > \tau\|\bz_{\bu- \bar\bu}\|_\bLqd$  due to \eqref{E3.20}. Then, this inequality, \eqref{E4.26}, and \eqref{E4.27} yield
\begin{align*}
&J(\bu) - J(\bar\bu) = J'(\bar\bu)(\bu - \bar\bu) + \frac{1}{2}J''(\bar\bu + \theta(\bu - \bar\bu))(\bu - \bar\bu)^2\\
&> \tau\|\bz_{\bu - \bar\bu}\|_\bLqd - \frac{\tau}{2}\|\bz_{\bu - \bar\bu}\|_\bLqd \ge \frac{\kappa}{2}\|\by_\bu - \bar\by\|^2_\bLqd.
\end{align*}

{\em Case II: $\bu - \bar\bu \in C^\tau_{\bar\bu}$.} We use $J'(\bar\bu)(\bu - \bar\bu) \ge 0$, \eqref{E4.8}, and \eqref{E4.28} to infer
\begin{align*}
&J(\bu) - J(\bar\bu) = J'(\bar\bu)(\bu - \bar\bu) + \frac{1}{2}J''(\bar\bu + \theta(\bu - \bar\bu))(\bu - \bar\bu)^2\\
&\ge \frac{1}{2}J''(\bar\bu)(\bu - \bar\bu)^2 + \frac{1}{2}[J''(\bar\bu + \theta(\bu - \bar\bu)) - J''(\bar\bu)](\bu - \bar\bu)^2\\
& \ge \frac{\delta}{2}\|\bz_{\bu - \bar\bu}\|^2_\bLqd - \frac{\delta}{4}\|\bz_{\bu - \bar\bu}\|^2_\bLqd = \frac{\delta}{4}\|\bz_{\bu - \bar\bu}\|^2_\bLqd\\
& \ge  \frac{\delta}{16}\|\by_\bu - \bar\by\|^2_\bLqd \ge \frac{\kappa}{2}\|\by_\bu - \bar\by\|^2_\bLqd,
\end{align*}
which concludes the proof.

%\bibliographystyle{plain}
%\bibliography{Casas-Kunisch}
\end{document}